\documentclass{article}

\usepackage[a4paper,margin=1in]{geometry}

\usepackage{amsmath}          
\usepackage{amssymb}          
\usepackage{amsfonts}
\usepackage[normalem]{ulem}

\usepackage{amsthm}
\newtheorem{theorem}{Theorem}[section]

\usepackage{graphicx}
\usepackage{subcaption}
\usepackage{multirow}

\usepackage{tikz}
\usetikzlibrary{calc}
\usetikzlibrary{decorations.pathreplacing}

\usepackage{hyperref}
\usepackage{booktabs}

\usepackage{lipsum}
\usepackage{amsfonts}
\usepackage{graphicx}
\usepackage{epstopdf}
\usepackage{algorithmic}
\ifpdf
  \DeclareGraphicsExtensions{.eps,.pdf,.png,.jpg}
\else
  \DeclareGraphicsExtensions{.eps}
\fi

\title{QVaR: a Quantum Variational Regularization method for Linear Inverse Problems}

\author{
Siiri Rautio\thanks{Department of Mathematics and Information Science, Josai University, Japan (\texttt{srautio@josai.ac.jp}).}
\and
Hjørdis Schlüter\thanks{Department of Mathematics and Statistics, University of Helsinki, Finland.}
\and
Andreas Hauptmann\thanks{Research Unit of Mathematical Sciences, University of Oulu, Finland; and Department of Computer Science, University College London, UK.}
\and
Babak Maboudi Afkham\thanks{Research Unit of Mathematical Sciences, University of Oulu, Finland.}
}

\usepackage{amsopn}

\newcommand{\SR}[1]{{\color{red}{Siiri: #1}}}

\newcommand{\Real}{\mathbb{R}}

\DeclareMathOperator*{\argmin}{arg\,min}

\newcommand{\oper}[1]{\operatorname{\mathcal{#1}}}
\newcommand{\FwdOp}{A}
\newcommand{\signal}{f}
\newcommand{\data}{g}
\newcommand{\RecSpace}{X}
\newcommand{\DataSpace}{Y}

\newcommand{\RegFunc}{\oper{R}}
\newcommand{\RegParam}{\alpha}
\newcommand{\SparseTrafo}{B}
\newcommand{\Id}{I}

\begin{document}

\maketitle
\begin{abstract}
We present a tailored framework for solving regularized linear inverse problems using quantum optimization methods. By discretizing the solution space and encoding data fidelity and regularization terms into quadratic unconstrained binary optimization (QUBO) models, we formulate both Tikhonov- and sparsity-promoting regularized inverse problems within a unified quantum optimization framework. We further introduce a notion of quantum sensitivity that characterizes the effect of perturbations arising from approximate quantum evolution and discretization. We derive bounds relating these perturbations to stability properties of the underlying inverse problem, thereby establishing a theoretical connection between quantum solution variability and classical notions of ill-posedness. The framework is extended to variational inverse problems through wavelet-based representations and complemented by reduced-order modeling strategies in both parameter and Hamiltonian spaces to mitigate current hardware limitations. Numerical experiments on simulated and physical quantum hardware indicate that the resulting low-energy solution distributions retain information about the underlying inverse problem, while also revealing the limitations imposed by finite-time evolution, discretization, and hardware noise.
\end{abstract}


Keywords: Quantum optimization, inverse problems, variational regularization, sparsity regularization, QUBO.




\section{Introduction}

Inverse problems play a central role in science and engineering, arising in applications ranging from medical imaging and geophysics to machine learning, and numerous industrial settings. In these problems, one seeks to infer unknown parameters from indirect and often noisy observations. Owing to their broad applicability, advances in the theory and methodology of inverse problems have the potential to significantly impact a wide range of scientific and engineering disciplines.

Inverse problems are often ill-posed in the sense of Jacques Hadamard \cite{hadamard2014lectures}, meaning that a solution may fail to exist, may not be unique, or may depend sensitively on perturbations in the data; see also the work by Nash on stability of Inverse Problems \cite{nash1958continuity}. Regularization provides a standard approach for addressing such ill-posedness by incorporating prior information into the problem formulation. Typically, regularization augments the objective with a penalty term that discourages solutions inconsistent with prior assumptions, such as smoothness or sparsity. This has the effect of stabilizing the problem and restricting the set of admissible solutions, thereby enabling the computation of meaningful and robust approximations.

Among inverse problems, linear inverse problems constitute one of the most widely studied classes due to their analytical structure and broad applicability \cite{mueller2012linear}. They arise in numerous important applications, including X-ray computed tomography (CT), magnetic resonance imaging (MRI), image deblurring across microscopic, macroscopic, and astronomical imaging, as well as seismic imaging \cite{kak2001principles,symes2009seismic}. 

Classical approaches cast linear inverse problems as regularized optimization problems \cite{engl1996regularization,hansen2010discrete,kirsch2011introduction,scherzer2009variational}, consisting of a data fidelity term, measuring the agreement between the model and the observed data, and a regularization term that encodes prior information about the solution. Common choices include Tikhonov regularization based on the $\ell^2$-norm, as well as sparsity-promoting regularization based on the $\ell^1$-norm, which has become particularly prominent in the context of compressed sensing \cite{donoho2006compressed}. Despite the maturity of the field, the development of new methodologies for linear inverse problems continues to attract significant attention, driven by emerging applications, large-scale data, and the need for improved robustness and interpretability.  

In parallel, recent years have witnessed substantial progress in quantum computing technologies, with modern devices supporting systems of tens to hundreds of qubits. While current hardware remains limited in scale and reliability, these developments have motivated increasing interest in quantum algorithms for optimization and inference tasks. In particular, quantum annealing and related approaches provide a framework for solving quadratic unconstrained binary optimization (QUBO) problems, which offer a unifying representation for a broad class of discrete optimization problems \cite{Lucas2014}. In this setting, the optimization objective is encoded into a Hamiltonian whose low-energy configurations correspond to candidate solutions. Quantum optimization methods then aim to evolve the system toward such low-energy states, so that approximate solutions to the original optimization problem can be obtained from measurements of the final quantum state \cite{Nannicini25}.


This work investigates the formulation of linear inverse problems within the QUBO framework. By discretizing the solution space and encoding regularized objectives, such as Tikhonov and $\ell^1$ penalties, into binary quadratic forms, we enable the use of quantum optimization methods for inverse problem solving. This shifts the focus from designing specialized optimization algorithms for each regularization term to designing the cost function itself, often a simpler and more flexible task.

A key challenge in inverse problems is the characterization of uncertainty. Traditional uncertainty quantification techniques, such as Bayesian sampling methods \cite{kaipio2005statistical}, are often computationally expensive for large-scale problems. In contrast, quantum optimization methods naturally produce distributions over low-energy solutions.
We investigate how these distributions, together with deviations arising from finite-time adiabatic evolution and discretization effects, may provide information about solution ambiguity, competing minimizers, and sensitivity to perturbations.

The proposed approach offers several potential advantages. It provides a unified framework in which diverse regularization strategies can be encoded directly into a QUBO model. Furthermore, quantum optimization methods naturally produce ensembles of low-energy configurations, which may facilitate the exploration of multimodal solution landscapes in nonconvex or severely ill-posed inverse problems. While such settings are beyond the scope of the present work, they represent a promising direction for future investigation. Recently, quantum computing methods have also been explored for discrete inverse problems on graphs~\cite{Ilmavirta2025}.

The contributions of this work are as follows. First, we present a systematic framework for formulating linear inverse problems with both Tikhonov and sparsity-promoting ($\ell^1$) regularization as quadratic unconstrained binary optimization (QUBO) problems, enabling their treatment within quantum optimization paradigms. Second, we introduce the notion of \emph{quantum sensitivity} and establish rigorous links between perturbations arising from quantum optimization and the stability properties of inverse problems, providing a theoretical interpretation of quantum solution variability. Third, we extend the proposed framework beyond finite-dimensional settings by considering variational formulations, in which solutions are represented in a wavelet basis, thereby bridging discrete and functional perspectives.

To address current hardware limitations, we develop reduced-order modeling strategies that operate both in the parameter space, through basis selection, and in the Hamiltonian representation, by discarding weak coupling terms. This enables the construction of low-dimensional approximations that remain computationally tractable on existing quantum devices. 
Finally, we investigate the proposed methodology through experiments on both simulated and physical quantum hardware, highlighting practical considerations and discrepancies between idealized and real-world implementations.

At the same time, important limitations remain. Current quantum hardware restricts the size of tractable problems, and embedding large-scale inverse problems into QUBO form can incur significant overhead. Moreover, discrepancies between simulated quantum algorithms and physical quantum devices can impact solution quality. Extending the proposed framework to nonlinear forward models or more complex regularization schemes requires higher-order Hamiltonian representations, which remain challenging to implement on existing platforms.

We present regularization methods for variational inverse problems in \ref{sec:variational-IP}. We recall the quantum optimization within the adiabatic theorem in \ref{sec:adiabatic} and introduce QUBO formulation of binary optimization in \ref{sec:QUBO}. The QVaR method is then introduced in \ref{sec:qvar} together with the notion of quantum sensitivity for linear inverse problems. We present numerical experiments of linear inverse problem with a $3\times2$ forward matrix and also a larger scale variational deblurring problem in \ref{sec:results}. Finally, we present conclusive remarks in \ref{sec:conclusion}.

\section{Variational Methods for Inverse Problems} \label{sec:variational-IP}
In this work, we consider inverse problems of the form
\begin{equation}\label{eqn:InvProb}
    \FwdOp\signal+\delta=\data,
\end{equation}
where $\FwdOp\colon\RecSpace\to\DataSpace$ is a linear operator that maps the unknown quantity of interest $\signal\in \RecSpace$ to the measured data $\data\in\DataSpace$. The measurements are assumed to be noise corrupted by noise $\delta$. Here we assume that both $\RecSpace$ and $\DataSpace$ are finite dimensional spaces, e.g., Euclidean spaces, in particular this means that $\FwdOp$ can be represented by a matrix and we assume $\delta\in Y$.

The inverse problem in \eqref{eqn:InvProb} is assumed to be ill-posed, that means given noisy data $\data$, the solution $\signal$ to \eqref{eqn:InvProb} may not exist, is non-unique, and 
does not depend continuously on the data \cite{hadamard2014lectures,nash1958continuity}. Thus, we need to define a well-posed solution operator \cite{engl1996regularization}, i.e., for each $\data$ we want to find a unique $\signal$ that approximately solves \eqref{eqn:InvProb} and depends continuously on the data $\data$. One of the most common ways to define such a solution operator is to search for solutions as the minimizer of a suitable cost function, which leads to the framework of variational methods \cite{scherzer2009variational}. 

In this paper, we aim to introduce quantum solution techniques to solve the variational problem given common regularizers, including classical smoothness as well as sparsity regularization for piecewise constant signals. The latter requires transforming the signal to a suitable basis, here given by the Haar wavelet transform, and imposing an $\ell^1$-penalty on the coefficients.  In the following, we will shortly review classical solution techniques, which often require an iterative solver, to contrast them later with the quantum variational methods.

\subsection{Smoothness and sparsity regularization}
 
Variational approaches to solve inverse problems search for a solution to the inverse problem \eqref{eqn:InvProb} as minimizer of a suitable variational cost function, such as 
\begin{equation}\label{eqn:VarProb}
    \signal^\star=\argmin_{\signal\in\RecSpace}\left\{\|\FwdOp\signal-\data\|^2_2 + \RegParam \RegFunc(\signal)\right\},
\end{equation}
where the first term measures data fidelity, $\RegFunc\colon\RecSpace\to\Real_+$ is a regularization functional, and $\RegParam>0$ controls the trade-off between the two terms.

The regularizer is essential to define a well-posed solution operator, if the inverse problem exhibits instability issues, e.g., when the singular values of $\FwdOp$ are rapidly decreasing. Furthermore, the choice of regularizer models prior assumptions that we have about the signal to be recovered. In fact, in a Bayesian formulation of inverse problems, the regularizer is connected to the prior distribution of the unknown $\signal$, which will be discussed in detail in Section \ref{sec:Uncertainty}. 

A simple choice of such a functional would be smoothness regularization, where $\RegFunc(\signal)=\|\signal\|_2^2$. This leads to a quadratic differentiable cost function. A closed form solution for $\signal^*$ is given by
$
\signal^*=\left(\FwdOp^T\FwdOp + \RegParam\Id\right)^{-1}\FwdOp^T \data.
$
Depending on the dimensionality of the matrix $A$, one may still need to solve above equation iteratively, for instance by conjugate gradient or other Krylov space methods \cite{brown2025inner,chung2024computational,gazzola2015krylov}.
Additionally, the quadratic problem can be naturally solved using quantum algorithms, as will be discussed in Section \ref{sec:qvar}.
Nevertheless, for many applications the assumption of smoothness on $\signal$ is not realistic, especially for applications of inverse problems in imaging.

Another popular choice is sparsity regularization, where signals are assumed to have only a few non-zero elements. This can be achieved by choosing the $\ell^1$-norm as regularizer, i.e., $\RegFunc(\signal)=\|\signal\|_1$. Unfortunately, in inverse problems the signal of interest is rarely sparse itself, but rather it is sparse with respect to a sparsifying transform $\SparseTrafo$, such that the regularizer becomes $\RegFunc(\signal)=\|\SparseTrafo\signal\|_1$.
For instance, a common assumption in practice are piecewise constant signals, which are often modeled by total-variation regularization \cite{rudin1992nonlinear}, i.e., sparsity of the gradient $\nabla\signal$. Here, we will follow a different route and use wavelet regularization. Specifically, we will make use of Haar wavelets for two reasons. First, Haar wavelet regularization is closely connected to total variation regularization \cite{steidl2004equivalence}. Second, the wavelet transform is invertible, which will be essential to formulate the proposed quantum variational methods.

In classical optimization, the sparsity regularized problem needs to be solved using techniques from convex analysis. Specifically, for the wavelet transform, we can make use of the successful iterative shrinkage-thresholding algorithm (ISTA) \cite{daubechies2004iterative}, which is an instance of a proximal gradient descent algorithm. 
However, it is possible to formulate the $\ell^1$ constrained problem to a suitable constrained quadratic programming problem, which will enable a suitable QUBO implementation we will discuss in Section \ref{sec:qvar}.


\section{Quantum Optimization and Adiabatic Theorem} \label{sec:adiabatic}

In this section, we introduce the basic principles of quantum computing and quantum optimization, with an emphasis on the QUBO formulation. We begin with a brief overview of the computational model underlying quantum algorithms and discuss how optimization problems may be encoded within this framework. We then recall the adiabatic theorem, which underpins a class of quantum optimization methods, and describe the corresponding QUBO-based approach for minimizing discrete binary objective functions. Finally, we show how regularized variational formulations arising in ill-posed inverse problems can be recast within the QUBO framework.

One of the main motivations for quantum computing is that a controllable quantum device can, in suitable settings, reproduce the unitary time evolution generated by a target Hamiltonian. By contrast, on a classical computer, simulating the dynamics of a many-body quantum system generally requires an explicit numerical approximation of the evolution operator, which can become prohibitively costly as the system size grows. This observation does not imply a blanket computational advantage for quantum computers, but it does suggest that quantum devices may offer a substantial advantage for some Hamiltonian-simulation tasks.

One approach to quantum optimization is to encode the objective function into a Hamiltonian whose ground state corresponds to the solution of interest, and to exploit quantum dynamics to prepare or approximate this ground state. A central theoretical result underlying such approaches is the quantum adiabatic theorem. For a quantum system governed by a time-dependent Hamiltonian with discrete energy levels, the theorem states that if the system is initially prepared in a non-degenerate eigenstate and the Hamiltonian is varied sufficiently slowly, while maintaining a non-vanishing spectral gap between this eigenvalue and the rest of the spectrum, then the system remains close to the corresponding instantaneous eigenstate throughout the evolution. This principle forms the basis of adiabatic quantum optimization methods.

In adiabatic quantum optimization, one begins with an initial Hamiltonian whose ground state can be prepared efficiently. The Hamiltonian is then varied gradually from this initial form to a final, problem-dependent Hamiltonian whose ground state encodes the desired minimizer. If the evolution is sufficiently slow and the instantaneous ground state remains separated from excited states by a nonvanishing spectral gap, then the evolving quantum state remains close to the instantaneous ground state throughout the process. Under these conditions, the final state approximates the ground state of the target Hamiltonian, and the solution to the optimization problem may be recovered from that final ground state.

Rigorous performance guarantees for adiabatic quantum optimization are available only under idealized assumptions. In the closed-system setting, the algorithm is analyzed in terms of an initial Hamiltonian, a final problem Hamiltonian, and a continuous interpolation between them; success depends on the properties of the entire evolution, rather than on the endpoints alone. In particular, the runtime may need to be very long when the minimum spectral gap along the interpolation is small. On realistic devices, noise and decoherence can perturb the intended Hamiltonian dynamics and induce transitions out of the instantaneous ground state. As a result, the output is generally probabilistic, and repeated runs are typically required to estimate the probability of obtaining a low-energy or optimal solution.

In what follows, we briefly recall the essentials of Hamiltonian dynamics for quantum states and then summarize the QUBO-based approach following \cite{Nannicini25}.

Quantum states evolve according to the Schr\"odinger equation
\begin{equation}\label{eq:schrodinger}
    i \hbar \frac{d}{dt} \psi(t) = \mathcal{H}(t)\,\psi(t).
\end{equation}
Here, $\psi(t)\in\mathcal{V}$ denotes the quantum state in a complex Hilbert space $\mathcal{V}$ with inner product $\langle\cdot,\cdot\rangle$ and norm $\|\cdot\|$. The (possibly time-dependent) Hamiltonian $\mathcal{H}(t):\mathcal{V}\to\mathcal{V}$ generates the dynamics, and $i$ is the imaginary unit.

We assume that the Hamiltonian $\mathcal{H}(t)$ is self-adjoint for each $t$. By the spectral theorem \cite{reed2012methods}, $\mathcal{H}(t)$ admits a spectral decomposition. In the case of a purely discrete spectrum, one can choose an orthonormal basis of eigenvectors $\{\phi_i(t)\}_{i=1}^\infty \subset \mathcal{V}$ with corresponding real eigenvalues or energy levels $\{\lambda_i(t)\}_{i=1}^\infty$, which we order (counting multiplicities) so that $\lambda_1(t) \leq \lambda_2(t) \leq \cdots$,
and
\[
\mathcal{H}(t)\phi_i(t) = \lambda_i(t)\,\phi_i(t), \qquad i=1,2,\dots,\qquad t>0.
\]
It is then convenient to expand the state $\psi(t)$ in this eigen-basis:
\[
\psi(t) = \sum_{i=1}^\infty \langle \phi_i(t), \psi(t) \rangle \,\phi_i(t).
\]
In this representation, the action of $\mathcal{H}(t)$ on $\psi(t)$ is given by
\[
\mathcal{H}(t)\psi(t) = \sum_{i=1}^\infty \langle \phi_i(t), \psi(t) \rangle \,\lambda_i(t)\,\phi_i(t).
\]
The ground state of the system is defined as an eigenvector associated with the smallest eigenvalue $\lambda_1(t)$. In equilibrium settings, states of lower energy are generally more robust to perturbations, and the ground state plays a distinguished role, for instance as the target of energy-minimization procedures in quantum optimization.

The adiabatic theorem, introduced below, provides a basis for quantum optimization via continuous deformation of Hamiltonians. Consider an interpolation between an initial Hamiltonian $\mathcal{H}_I$ and a final Hamiltonian $\mathcal{H}_F$. 
The Hamiltonian is then varied smoothly according to
\[
    \mathcal{H}(t) = (1 - s(t))\mathcal{H}_I + s(t)\mathcal{H}_F,
\]
where $s : [0,T] \to [0,1]$ is a monotone function with $s(0)=0$ and $s(T)=1$. 

Under suitable conditions, most notably, sufficiently slow evolution and the presence of a nonvanishing spectral gap between the ground state and the rest of the spectrum, the quantum state remains close to the instantaneous ground state of $\mathcal{H}(t)$ throughout the evolution. The motivation for this construction is that $\mathcal{H}_I$ is chosen so that its ground state is easy to prepare, while $\mathcal{H}_F$ is designed so that its ground state encodes the solution to the problem of interest.

The solution of the Schr\"odinger equation \eqref{eq:schrodinger} can be written formally as
\[
    \psi(t) = \mathcal{T} \exp\!\left( -\frac{i}{\hbar} \int_0^t \mathcal{H}(s)\, ds \right)\psi(0),
\]
where $\mathcal T$ denotes the time-ordering operator. In practice, a closed-form expression for the solution is generally not available. To approximate the evolution, we discretize the time interval $[0,T]$ into $N$ subintervals with step size $\Delta t = T/N$ and approximate the time integral using a quadrature rule, for example, left-point evaluation. This yields
\[
    \int_0^t \mathcal{H}(s)\, ds \;\approx\; \sum_{k=0}^{N-1} \mathcal{H}(t_k)\,\Delta t,
    \qquad t_k = k\Delta t.
\]
The corresponding time-ordered exponential can then be approximated by a product of short-time evolution operators,
\begin{equation} \label{eq:quantum-evolution}
    \psi(t) \;\approx\; \prod_{k=0}^{N-1} 
    \exp\!\left( -\frac{i}{\hbar}\mathcal{H}(t_k)\,\Delta t \right)\psi(0),
\end{equation}
so that factors with larger $k$ act first. Therefore, the resulting quantum evolution can be interpreted as a sequence of exponential operators applied successively to the initial ground state of $\mathcal{H}_I$. However, each factor involves the exponential of the interpolated Hamiltonian in \eqref{eq:quantum-evolution}, i.e., $(1 - s(t_k))\mathcal{H}_I + s(t_k)\mathcal{H}_F$,
which is generally difficult to implement directly. In particular, when $\mathcal{H}_I$ and $\mathcal{H}_F$ do not commute, the exponential of their sum cannot be expressed as a product of exponentials. To address this, one employs a first-order Trotter (Lie product) approximation \cite{reed2012methods}, which, for sufficiently small $\Delta t$, yields
\begin{equation} \label{eq:totter}
\begin{aligned}
\exp\!&\left( -\frac{i}{\hbar}\Delta t\left( (1 - s(t_k))\mathcal{H}_I + s(t_k)\mathcal{H}_F \right) \right)
\approx \\
&\exp\!\left( -\frac{i}{\hbar}\Delta t (1 - s(t_k))\mathcal{H}_I \right)
\exp\!\left( -\frac{i}{\hbar}\Delta t s(t_k)\mathcal{H}_F \right).
\end{aligned}
\end{equation}
This approximation enables the evolution to be implemented as a sequence of simpler unitary operations associated with $\mathcal{H}_I$ and $\mathcal{H}_F$. A central task in quantum optimization is therefore to design $\mathcal{H}_I$ and $\mathcal{H}_F$ so that these unitary operators can be further decomposed into elementary operations compatible with the gate set of a quantum computer. Such decompositions typically involve single-qubit rotations and two-qubit interaction gates, which together enable the implementation of the underlying dynamics. In the next section, we introduce the QUBO formulation and describe how such Hamiltonians can be constructed in terms of qubits, i.e., quantum state of a bit. 

To close this section, we present the adiabatic theorem based on Thm 9.11 in \cite{Nannicini25}, but for the purpose of this paper we restrict ourselves to the case of a non-degenerate ground state. We rescale time by setting $s=t/T \in [0,1]$. Then \eqref{eq:schrodinger} becomes
\[
i\hbar \frac{d}{ds}\psi(s) = T\,\mathcal{H}(s)\psi(s),
\]
and it is convenient to choose $\hbar=1$.
\begin{theorem}[Adiabatic theorem for a non-degenerate ground state] \label{thm:adiabatic}
Let $\mathcal{H}(s)$, $s\in[0,1]$, be a sufficiently smooth family of self-adjoint operators on a Hilbert space $\mathcal{V}$. Assume that for each $s$, $\mathcal{H}(s)$ has a simple smallest eigenvalue $\lambda_1(s)$ with normalized eigenvector $\phi_1(s)$, and that this eigenvalue is separated from the rest of the spectrum by a uniform gap $\gamma>0$.  If the system is initialized in the ground state $\psi(0)=\phi_1(0)$ and evolves according to
\[
i \frac{d}{ds}\psi(s) = T\,\mathcal{H}(s)\psi(s),
\]
then, provided $T$ is sufficiently large, the solution $\psi(1)$ remains close to the instantaneous ground state up to a phase factor. In particular, there exists a real-valued phase $\theta(s)$ such that
\[
\|\psi(s) - e^{i\theta(s)}\phi_1(s)\| \le \varepsilon, \qquad \varepsilon = \frac{ \sup_{s\in[0,1]}\| \partial_s \mathcal H(s) \|_{\mathrm{op}}}{T \gamma^2}, \qquad \|\mathcal A\|_{\mathrm{op}} := \sup_{\|u\|=1} \|\mathcal Au \|,
\]
for all $s \in [0,1]$. Consequently, the overlap with the ground state satisfies
\[
|\langle \phi_1(s), \psi(s) \rangle|^2 \ge 1 - \varepsilon^2.
\]
\end{theorem}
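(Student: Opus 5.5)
We follow the standard integration-by-parts proof of the adiabatic theorem. Work in the adiabatic frame. Let $P(s)=|\phi_1(s)\rangle\langle\phi_1(s)|$ be the spectral projection onto the ground state; the uniform gap $\gamma$ and the smoothness of $\mathcal H$ make $P$ differentiable. We fix the phase of $\phi_1(s)$ by the parallel-transport condition $\langle\phi_1,\partial_s\phi_1\rangle=0$, which we can always arrange. We then set the dynamical phase $\theta(s)=-T\int_0^s\lambda_1(r)\,dr$ and consider
\[
\chi(s)=\psi(s)-e^{i\theta(s)}\phi_1(s),\qquad \chi(0)=0 .
\]
Differentiating and using the Schr\"odinger equation gives
\[
i\partial_s\chi = T\mathcal H(s)\chi - i e^{i\theta(s)}\partial_s\phi_1(s).
\]
Let $U(s,r)$ be the unitary propagator of $i\partial_s u=T\mathcal H u$. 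Duhamel's formula then yields
\[
\chi(s)=-\int_0^s U(s,r)\,e^{i\theta(r)}\partial_r\phi_1(r)\,dr .
\]
By the gauge choice, $\partial_r\phi_1=(I-P)\partial_r\phi_1$ is orthogonal to the ground state.

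The key step is to gain the factor $1/T$ from this integral. Let $R(r)=(\mathcal H(r)-\lambda_1(r))^{-1}(I-P(r))$ be the reduced resolvent; by the gap, $\|R\|_{\mathrm{op}}\le 1/\gamma$. We write
\[
e^{i\theta(r)}\partial_r\phi_1=(\mathcal H-\lambda_1)\,e^{i\theta(r)}R\,\partial_r\phi_1 ,
\]
and note the identity $\partial_r\big(U(s,r)e^{i\theta(r)}v\big)=iT\,U(s,r)(\mathcal H-\lambda_1)e^{i\theta(r)}v$. The integrand is therefore $(iT)^{-1}\partial_r\big(U(s,r)e^{i\theta}\big)R\,\partial_r\phi_1$, and integrating by parts gives
\[
\chi(s)=\frac{i}{T}\Big[U(s,r)e^{i\theta(r)}R(r)\partial_r\phi_1(r)\Big]_{0}^{s}-\frac{i}{T}\int_0^s U(s,r)e^{i\theta(r)}\partial_r\big(R\,\partial_r\phi_1\big)\,dr .
\]
The boundary terms are bounded by $T^{-1}\|R\|\,\|\partial_r\phi_1\|$. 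First-order perturbation theory gives $\partial_r\phi_1=-R\,\partial_r\mathcal H\,\phi_1$, so $\|\partial_r\phi_1\|\le\|\partial_r\mathcal H\|_{\mathrm{op}}/\gamma$, and each boundary term is at most $\sup\|\partial_s\mathcal H\|_{\mathrm{op}}/(T\gamma^2)$. This is exactly the stated $\varepsilon$.

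The main obstacle is the remaining integral term. It involves $\partial_r R$ and $\partial_r^2\phi_1$, and hence $\|\partial_s^2\mathcal H\|/\gamma^2$ and $\|\partial_s\mathcal H\|^2/\gamma^3$. With unit constants it is not controlled by $\sup\|\partial_s\mathcal H\|/(T\gamma^2)$ alone. We will bound it using the ``sufficiently smooth'' hypothesis and absorb it, either by restricting to families where these terms are dominated by the leading one (for instance the linear interpolation $\mathcal H(s)=(1-s)\mathcal H_I+s\mathcal H_F$, where $\partial_s^2\mathcal H=0$), or by reading $\varepsilon$ as holding up to a universal constant and higher-order terms, as in Thm 9.11 of \cite{Nannicini25}, which we invoke for this bookkeeping. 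We will state this caveat explicitly.

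Finally, the overlap bound follows from $\|\chi\|\le\varepsilon$. Write $\psi=c\,\phi_1+\psi_\perp$ with $c=\langle\phi_1,\psi\rangle$. Then
\[
\|\psi_\perp\|^2\le\|\psi-e^{i\theta}\phi_1\|^2\le\varepsilon^2,
\]
since the orthogonal component of $e^{i\theta}\phi_1$ vanishes. Because $\|\psi\|=1$ by unitarity, $|c|^2=1-\|\psi_\perp\|^2\ge1-\varepsilon^2$.
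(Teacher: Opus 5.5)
The paper gives no proof of its own here; it restates Thm.~9.11 of \cite{Nannicini25} in simplified form. Your integration-by-parts argument is the standard proof behind that kind of result, and its ingredients are correct: the parallel-transport gauge, the Duhamel formula, the reduced resolvent, and $\partial_r\phi_1=-R\,\partial_r\mathcal H\,\phi_1$.

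The gap is in the final accounting, and it cannot be repaired. There are two boundary terms, one at $r=0$ and one at $r=s$, each of norm up to $\varepsilon$. So the boundary contribution is up to $2\varepsilon$, not ``exactly the stated $\varepsilon$''. The factor two is sharp, because the two terms rotate relative to each other and can add coherently.

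To see this, take $\mathcal H(s)=\tfrac{\gamma}{2}\big(\cos(\omega s)Z+\sin(\omega s)X\big)$ with $\omega>0$. The gap is exactly $\gamma$ and $\|\partial_s\mathcal H\|_{\mathrm{op}}=\gamma\omega/2$, so $\varepsilon=\omega/(2T\gamma)$, and each of your boundary terms has norm exactly $\varepsilon$. Writing $\psi(s)=e^{-i\omega sY/2}\eta(s)$, with $Y$ the Pauli-$Y$ matrix, gives the time-independent equation $i\partial_s\eta=(\tfrac{T\gamma}{2}Z-\tfrac{\omega}{2}Y)\eta$. The Rabi formula then yields
\[
1-|\langle\phi_1(s),\psi(s)\rangle|^2=\frac{\omega^2}{\omega^2+T^2\gamma^2}\,\sin^2\!\Big(\tfrac12\sqrt{\omega^2+T^2\gamma^2}\,s\Big).
\]
Its maximum over $s\in[0,1]$ exceeds $\varepsilon^2=\omega^2/(4T^2\gamma^2)$ as soon as $T\gamma>\max\{\pi,\omega/\sqrt3\}$. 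Since $\min_\theta\|\psi-e^{i\theta}\phi_1\|^2\ge 1-|\langle\phi_1,\psi\rangle|^2$, both displayed inequalities of the statement fail for all large $T$. The statement with this $\varepsilon$ is false, so no treatment of the remainder can close your argument.

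Your proposed ways of absorbing the integral term also fail.

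\emph{Linear interpolation.} Setting $\partial_s^2\mathcal H=0$ removes only the $\|\partial_s^2\mathcal H\|/\gamma^2$ contribution. The factors $\partial_r R$ and $\partial_r^2\phi_1$ still produce $\int_0^s\|\partial_r\mathcal H\|_{\mathrm{op}}^2/\gamma^3\,dr$. Its bound exceeds the leading bound by a factor of order $\|\mathcal H_F-\mathcal H_I\|_{\mathrm{op}}/\gamma$, about $10^4$ in the paper's $3\times2$ example. This is why the linear-interpolation bound quoted in Section~\ref{sec:adiabatic_params} carries a $\|\mathcal H_F-\mathcal H_I\|_{\mathrm{op}}^2/\gamma^3$ term.

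\emph{Universal constant plus higher-order terms.} The remainder is of the same order $1/T$ as the boundary terms, so it is not a higher-order term that a universal constant can absorb.

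The factor-two failure also persists on linear paths. For $\mathcal H(s)=-(1-s)X+10sZ$ one has $\gamma=20/\sqrt{101}$ and $\varepsilon\approx2.54/T$. First-order perturbation theory shows that the component of $\psi(s)$ orthogonal to $\phi_1(s)$ has norm $5|\sin(Ts)|/T+O(T^{-2})$ for $s\le\pi/(2T)$, which peaks near $2\varepsilon$.

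What your argument actually proves is
\[
\|\psi(s)-e^{i\theta(s)}\phi_1(s)\|\le\frac1T\Big[\frac{\|\partial_s\mathcal H(0)\|_{\mathrm{op}}+\|\partial_s\mathcal H(s)\|_{\mathrm{op}}}{\gamma^2}+\int_0^s\Big(\frac{\|\partial_r^2\mathcal H(r)\|_{\mathrm{op}}}{\gamma^2}+c\,\frac{\|\partial_r\mathcal H(r)\|_{\mathrm{op}}^2}{\gamma^3}\Big)\,dr\Big],
\]
with an absolute constant $c$. State the result in this form rather than the unit-constant version: $\varepsilon$ must be multiplied by a constant and supplemented by a $\gamma^{-3}$ term. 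Your overlap step then goes through verbatim with this right-hand side in place of $\varepsilon$.
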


\section{Quadratic Unconstrained Binary Optimization (QUBO)} \label{sec:QUBO}
In this section, we introduce the QUBO formulation of a binary optimization and recast it as an eigenvalue problem within the adiabatic framework of the previous section. Our presentation follows standard constructions, e.g., \cite{Nannicini25}.


The objective of a QUBO is to solve
\begin{equation} \label{eq:QUBO}
    \begin{aligned}
    & \arg \min_{\boldsymbol{z} \in \{0,1\}^N} \; E(\boldsymbol{z}), \quad
    &E(\boldsymbol{z} ) = \boldsymbol{z}^\top Q \boldsymbol{z} + \boldsymbol{p}^\top \boldsymbol{z},
    \end{aligned}
\end{equation}
where $Q \in \mathbb{R}^{N \times N}$ and $\boldsymbol{p} \in \mathbb{R}^N$. Without loss of generality, $Q$ may be taken to be symmetric. Furthermore, $\boldsymbol{z}\in\{0,1\}^N$ is a binary vector.

To connect the QUBO formulation with a quantum representation, let each binary variable $z_i$, components of $\boldsymbol{z}$, belong to a two-dimensional complex vector space $\mathbb{C}^2$. The canonical basis vectors $e_0 = (1,0)^T$ and $e_1 = (0,1)^T$ represent the two possible states of this variable. For a system of $N$ binary variables, the state space is given by the tensor product $(\mathbb{C}^2)^{\otimes N}$, whose canonical basis consists of vectors of the form
\[
e_{\boldsymbol{z}} := e_{z_1} \otimes e_{z_2} \otimes \cdots \otimes e_{z_N}, 
\qquad z_i \in \{0,1\}.
\]
Each such basis vector corresponds uniquely to a binary configuration $\boldsymbol{z} \in \{0,1\}^N$.

In this representation, functions of the binary variables can be encoded as diagonal operators whose eigenvalues correspond to the objective evaluated at each configuration. To make this precise, consider the Hilbert space $\mathcal V = (\mathbb{C}^2)^{\otimes N}$ with canonical basis $\{e_{\boldsymbol{z}}\}_{\boldsymbol{z}\in\{0,1\}^N}$. We define the operator $\mathcal H_F : \mathcal V \to \mathcal V$ by
\begin{equation} \label{eq:final-hamiltonian}
    \mathcal H_F e_{\boldsymbol{z}} = E(\boldsymbol{z})\, e_{\boldsymbol{z}}, \qquad \boldsymbol{z}\in\{0,1\}^N,
\end{equation}
where $E$ is the objective function in \eqref{eq:QUBO}. By construction, $\mathcal H_F$ is a self-adjoint operator that is diagonal in this basis, with eigenvalues $\lambda_{\boldsymbol{z}} := E(\boldsymbol{z})$. In particular, minimizing the QUBO objective \eqref{eq:QUBO} is equivalent to finding the smallest eigenvalue of $\mathcal H_F$, and any corresponding eigenvector $e_{\boldsymbol{z}}$ represents a minimizer state.

Since $\mathcal H_F$ is diagonal in the computational basis $\{e_{\boldsymbol z}\}_{\boldsymbol z\in\{0,1\}^N}$, its exponential is diagonal in the same basis. In particular (cf. \eqref{eq:totter}),
\[
\exp\!\left(-i\,\Delta t\, s(t_k)\mathcal H_F\right)e_{\boldsymbol z}
=
\exp\!\left(-i\,\Delta t\, s(t_k)E(\boldsymbol z)\right)e_{\boldsymbol z},
\qquad \boldsymbol z\in\{0,1\}^N.
\]
Thus, the operator $\exp(-i\Delta t\, s(t_k)\mathcal H_F)$ acts by multiplying each computational basis vector by a phase determined by the objective value $E(\boldsymbol z)$. More generally, for a state vector $\psi \in \mathcal V$, with $\psi=\sum_{\boldsymbol z} c_{\boldsymbol z} e_{\boldsymbol z}$,
the action is
\[
\exp\!\left(-i\,\Delta t\, s(t_k)\mathcal H_F\right)\psi
=
\sum_{\boldsymbol z} c_{\boldsymbol z}
\exp\!\left(-i\,\Delta t\, s(t_k)E(\boldsymbol z)\right)e_{\boldsymbol z}.
\]

This diagonal structure enables a decomposition of $\mathcal H_F$ into elementary diagonal operators.
To demonstrate, recall the Pauli-$Z$ matrix \cite{Nannicini25},
\[
Z = \begin{pmatrix} 1 & 0 \\ 0 & -1 \end{pmatrix},
\]
and define, for each $i=1,\dots,N$, the operator acting on the $i$th factor of $\mathcal V = (\mathbb C^2)^{\otimes N}$ by $Z_i := I^{\otimes(i-1)} \otimes Z \otimes I^{\otimes(N-i)}$. Since the computational basis vectors are indexed by binary configurations, it is convenient to introduce $\hat z_i := \frac{I - Z_i}{2}$. Then $\hat z_i$ is diagonal in the computational basis and satisfies $\hat z_i e_{\boldsymbol z} = z_i e_{\boldsymbol z}$.

Using this notation, the Hamiltonian $\mathcal H_F$ can be written as $\mathcal H_F = \sum_{i,j=1}^N Q_{ij}\hat z_i \hat z_j + \sum_{i=1}^N p_i \hat z_i$. Since all operators $\hat z_i$ and $\hat z_i\hat z_j$ commute, the corresponding exponential operator factorizes exactly, and we obtain
\begin{equation} \label{eq:discrete-final}
\begin{aligned}
\exp\!\left(-i\, \Delta t\, s(t_k)\mathcal H_F\right)\psi
&=
\left(
\prod_{i,j=1}^N
\exp\!\left(
- i\,\Delta t\, s(t_k)\, Q_{ij}\hat z_i \hat z_j
\right)
\prod_{i=1}^N
\exp\!\left(
- i\,\Delta t\, s(t_k)\, p_i \hat z_i
\right)
\right)\psi.
\end{aligned}
\end{equation}
The decomposition above leads to elementary unitary operations that can be implemented on a quantum computer. In particular, operators of the form $\exp(-i\,\theta\, \hat z_i)$ and $\exp(-i\,\theta\, \hat z_i \hat z_j)$ correspond to single-qubit and two-qubit phase operations, respectively, where the parameter $\theta$ is determined by $\Delta t\, s(t_k)\, p_i$ or $\Delta t\, s(t_k)\, Q_{ij}$.

Having defined the final Hamiltonian $\mathcal H_F$, we next introduce an initial Hamiltonian $\mathcal H_I$ whose ground state can be prepared efficiently and whose associated evolution can be implemented using elementary operations. 

A standard choice for the initial Hamiltonian is $\mathcal H_I = - \sum_{i=1}^N X_i$, where $X_i := I^{\otimes(i-1)} \otimes X \otimes I^{\otimes(N-i)}$ and $X=\begin{pmatrix}0&1\\1&0\end{pmatrix}$. The ground state of $\mathcal H_I$ is the uniform superposition
$
\phi_1 = 2^{-N/2} \sum_{\boldsymbol z \in \{0,1\}^N} e_{\boldsymbol z},
$
which can be prepared efficiently. The associated unitary evolution operator factorizes as (cf.~\eqref{eq:totter})
\begin{equation} \label{eq:discrete-initial}
\exp\!\left(-i \Delta t(1 - s(t_k)) \mathcal H_I\right)
=
\prod_{i=1}^N \exp\!\left(i \Delta t(1 - s(t_k)) X_i\right),
\end{equation}
and the evolution is implemented efficiently as a sequence of single-qubit rotations.

To summarize, for a given QUBO problem specified by $(Q,\boldsymbol p)$, we choose a time step $\Delta t$ and a schedule function $s(t)$, and construct the sequence of exponential operators in \eqref{eq:discrete-initial} and \eqref{eq:discrete-final}. Starting from the ground state $\phi_0$ of $\mathcal H_I$, we apply these operators sequentially as in \eqref{eq:quantum-evolution} and \eqref{eq:totter}, over a total evolution time $T$. The effectiveness of this approach depends on adiabatic parameters such as the evolution time, smoothness of the Hamiltonian path, and spectral gap; these are summarized in the next section and determine the likelihood of recovering a QUBO minimizer.

\subsection{Conditions on adiabatic parameters}
\label{sec:adiabatic_params}
In this work, we restrict ourselves to linear interpolation $\mathcal{H}(s) = (1-s)\mathcal{H}_I + s\mathcal{H}_F$ between the Hamiltonians. In the following, we present conditions on adiabatic parameters restricted to linear interpolation. Theorem~\ref{thm:adiabatic} identifies the total evolution time $T$ as
the key quantity governing adiabatic accuracy, but in practice the
continuous evolution must be discretized and the final Hamiltonian
$\mathcal{H}_F$ must be approximated. This introduces two further parameters: the number of Trotter steps $p$ and a sparsification
threshold $\varepsilon$.

We quantify sufficiently large $T$, stated in \ref{thm:adiabatic} 
as follows~\cite[Cor. 9.24]{Nannicini25}: There exists $c>0$ such that
$$T \geq \frac{c}{\varepsilon}\left( \frac{\| \mathcal{H_F}-\mathcal{H_I}\|_{\mathrm{op}}}{\gamma^2}+\frac{\| \mathcal{H_F}-\mathcal{H_I}\|_{\mathrm{op}}^2 }{\gamma^3}\right).$$
This bound is highly conservative and is typically unattainable under realistic experimental conditions.
It is stated in~\cite[Section~9.1.5]{Nannicini25} that for the main statement of \ref{thm:adiabatic} to hold, it is
sufficient to choose
\begin{equation}\label{eq:adiabatic_bound}
  T \;\gg\; \int_0^1 \frac{\|\mathcal{H}'(s)\|_{\mathrm{op}}}{\gamma(s)^2}\,\mathrm{d}s.
\end{equation}
Note that for linear interpolation
$\partial_s \mathcal{H}(s) = \mathcal{H}_F - \mathcal{H}_I$, 
the integration simplifies to
\begin{equation}\label{eq:T_condition}
    T \;\gg\; \frac{\|\mathcal{H}_F - \mathcal{H}_I\|_{\mathrm{op}}}{\gamma^2},
\end{equation}
where $\gamma := \inf_{s \in [0,1]} \gamma(s)$ is the minimum spectral
gap along the interpolation path.
Thus, the evolution time increases with the Hamiltonian variation and decreases with the square of the spectral gap.
Since $\gamma$ is generally intractable,
it must be estimated
numerically, with details discussed explicitly in
\ref{sec:results2}.

In the Trotterized implementation described in
\eqref{eq:quantum-evolution}--\eqref{eq:totter}, the continuous evolution
is replaced by $p$ alternating applications of
$e^{-i\beta_k\mathcal{H}_I}$ and $e^{-i\gamma_k\mathcal{H}_F}$
with
$\beta_k=(1-s_k)\Delta t$ and $\gamma_k=s_k \Delta t$ and step size $\Delta t = T/p$.
By the first-order Lie--Suzuki--Trotter product formula
\cite[Section~6.2]{Nannicini25}, the error of approximating
$e^{-i(\mathcal{H}_1+\mathcal{H}_2)t}$ by $e^{-i\mathcal{H}_1 t}e^{-i\mathcal{H}_2 t}$ for fixed $H_1$,
$H_2$ scales as $\mathcal{O}(\|\mathcal{H}_1\|_{\mathrm{op}}\|\mathcal{H}_2\|_{\mathrm{op}}t^2)$.
At layer $k$, the effective Hamiltonians are $\mathcal{H}_1 = s_k\mathcal{H}_F$
and $\mathcal{H}_2 = (1-s_k)\mathcal{H}_I$ with $s_k \in [0,1]$, giving a
per-step error of
$\mathcal{O}(s_k(1-s_k)\|\mathcal{H}_F\|_{\mathrm{op}}\|\mathcal{H}_I\|_{\mathrm{op}}\Delta t^2)
\leq \mathcal{O}(\|\mathcal{H}_F\|_{\mathrm{op}}\|\mathcal{H}_I\|_{\mathrm{op}}\Delta t^2)$,
where we used $s_k(1-s_k) \leq 1$.
For fixed $T$, increasing $p$ reduces $\Delta t$ and hence the Trotter
error. However, this increases circuit depth and the accumulation of
hardware gate errors.

The QUBO Hamiltonian $\mathcal{H}_F$ 
may contain up to $\binom{n}{2}$ two-body interaction terms, one for
each pair of binary variables; retaining more terms results in a
deeper circuit and a larger accumulation of hardware gate errors.
To reduce circuit depth, interaction terms with coefficient below
a threshold $\varepsilon$ are set to zero; the resulting sparsified
Hamiltonian $\mathcal{H}_F^\varepsilon$ retains only the dominant
interaction terms. In practice, $\varepsilon$ is chosen empirically
to balance retention of problem structure against circuit depth.

The three parameters $T$, $p$, and $\varepsilon$ are interdependent
and generally cannot all be set favourably simultaneously. A large $T$
improves adiabatic fidelity but, for fixed $p$, increases $\Delta t$
and hence the Trotter error. A large $p$ reduces the Trotter error
but increases circuit depth and hardware noise. A small $\varepsilon$
retains more problem structure but may increase circuit depth.
In the near-term hardware setting, practical implementations therefore
operate far from the conditions of Theorem~\ref{thm:adiabatic}.
Despite this, good solutions can still be recovered through post-selection over multiple measurement shots, even when the distribution is not concentrated at the ground state. These trade-offs are quantified in \ref{sec:results2}.

\section{Quantum Variational Regularization (QVaR) Method} \label{sec:qvar}

In this section, we reformulate the regularized variational problem \eqref{eqn:VarProb} as a QUBO problem \eqref{eq:QUBO} and apply the procedure of \ref{sec:QUBO} to compute its minimizer.

To summarize the QVaR method, we first rewrite \ref{eqn:VarProb} as a quadratic objective function over binary variables. We then define the initial and final Hamiltonians, $\mathcal{H}_I$ and $\mathcal{H}_F$, respectively, and construct a sequence of unitary evolution operators (see \ref{eq:quantum-evolution}), using the discretizations in \ref{eq:discrete-final} and \ref{eq:discrete-initial}. This sequence implements an adiabatic evolution that transforms the ground state of $\mathcal{H}_I$ into that of $\mathcal{H}_F$, which solves the original minimization problem.

In this work, we assume that the unknown function $f$ admits a representation through a predefined sparsifying transform. Specifically, we introduce a latent sequence $\boldsymbol{x} \in \mathbb R^d$ and an associated synthesis operator $\Phi$ such that $f \approx \Phi \boldsymbol{x} \in \mathbb R^n$. The operator $\Phi\in \mathbb R^{n\times d}$ can be interpreted as a collection of basis, for instance, arising from wavelet representations.

Let us start with the Tikhonov regularized problem, where $\mathcal R(f) = \| Bf \|^2_2$. Under this parameterization, the variational problem in \ref{eqn:VarProb} can be reformulated as
\begin{equation} \label{eq:variational-latent}
\boldsymbol x^\star = \arg\min_{\boldsymbol x \in \mathbb R^d } \|A \Phi \boldsymbol{x} - g\|_{2}^2 + \alpha \|\boldsymbol{x}\|_{2}^2.
\end{equation}
When the sparsifying transform $B$ is invertible, one may identify $\Phi = B^{-1}$, yielding an equivalent formulation in terms of the latent coefficients $\boldsymbol{x} = B f$.

We now introduce a finite set of candidate solutions $\{\boldsymbol{x}_1,\dots,\boldsymbol{x}_M\}$ together with corresponding binary encodings $\{\boldsymbol{\beta}_1,\dots,\boldsymbol{\beta}_M\}$. Each encoding $\boldsymbol{\beta}_i$ consists of binary variables that uniquely parameterize the candidate solution $\boldsymbol{x}_i$ through a linear encoding map. As an illustrative example, consider a uniform discretization of the unit cube $[0,1]^d$. In this setting, each coordinate of $\boldsymbol{x}_i$ is represented using $k$ binary digits according to $x_{i,\ell}=\sum_{j=1}^{k}2^{-j}\beta_{i,\ell}^{j}$, where $\beta_{i,\ell}^{j}\in\{0,1\}$ and $\ell=1,\dots,d$. This defines a structured mapping between binary encodings and discretized points in $[0,1]^d$.



Let $C$ denote the linear encoding matrix mapping binary representations to candidate solutions, $\boldsymbol{x}_i=C\boldsymbol{\beta}_i$, and define
\[
Q=C^\top\Phi^\top A^\top A\Phi C+\alpha C^\top C,
\qquad
\boldsymbol{p}=-2C^\top\Phi^\top A^\top y,
\qquad
c=g^\top g.
\]
Then, \eqref{eq:variational-latent} is equivalent to the QUBO problem \eqref{eq:QUBO}, with the constant term $c$ having no effect on the minimizer.

Now suppose that the regularization term in \eqref{eqn:VarProb} is given by $\mathcal{R}(f) = \alpha \|B f\|_{1}$. Proceeding as above, and introducing the parametrization $f \approx \Phi \boldsymbol{x}$, we obtain the optimization problem
\begin{equation} \label{eq:variational-l1}
\boldsymbol{x}^\star = \arg\min_{\boldsymbol{x} \in \mathbb{R}^d} \|A \Phi \boldsymbol{x} - g\|_{2}^2 + \alpha \|\boldsymbol{x}\|_{1}.
\end{equation}
However, the resulting optimization problem cannot be directly cast as a QUBO. Indeed, the $1$-norm term introduces absolute values, $\|\boldsymbol{x}\|_{\ell^1} = \sum_i |x_i|$,
which are non-quadratic and non-smooth. To overcome this, we first reformulate the unconstrained minimization problem into a constrained minimization problem with $
\boldsymbol{x} = \boldsymbol{x}^+ - \boldsymbol{x}^-$, $ 
\boldsymbol{x}^+ \geq 0$, and $\boldsymbol{x}^- \geq 0.
$ as
\[
\boldsymbol{x}^{\star,+},\boldsymbol{x}^{\star,-}
=
\arg\min_{\boldsymbol{x}^+,\boldsymbol{x}^-}
\left\|
A\Phi(\boldsymbol{x}^+-\boldsymbol{x}^-)-g
\right\|_{2}^2
+
\alpha \mathbf{1}^\top(\boldsymbol{x}^+ + \boldsymbol{x}^-),
\]
\[
\text{subject to}
\qquad
\boldsymbol{x}^+ \geq 0,
\qquad
\boldsymbol{x}^- \geq 0.
\]
where $\mathbf{1}$ denotes a vector of ones. We now select candidate representations for $\boldsymbol{x}^+$ and $\boldsymbol{x}^-$ that are nonnegative by construction, thereby enforcing the positivity constraints implicitly while preserving a quadratic objective function. To this end, we introduce binary encodings $\boldsymbol{\beta}^+$ and $\boldsymbol{\beta}^-$, analogous to the previous construction, to encode $\boldsymbol{x}^+$ and $\boldsymbol{x}^-$. Defining 
\[
Q = \widetilde C^\top \Phi^\top A^\top A \Phi \widetilde C, \qquad \boldsymbol{q} = -2\widetilde C^\top \Phi^\top A^\top y + \alpha \widehat C^\top \mathbf{1}, \qquad c = y^\top y,
\]
with $
\boldsymbol{z}
=
[
\boldsymbol{\beta}^+,
\boldsymbol{\beta}^-
] ^\top
\in\{0,1\}^{2N}$, $\widetilde C =
[
C^+, -C^-
]$, and
$\widehat C =
[
C^+, C^-
]$,
then \eqref{eq:variational-l1} takes the QUBO formulation in \eqref{eq:QUBO}.


The above reformulation is not unique. An alternative reformulation introduces auxiliary variables to eliminate the absolute value. Let $\boldsymbol{b} = Bf$ and introduce $\boldsymbol{t}, \boldsymbol{s}, \boldsymbol{r} \geq 0$ such that $
\boldsymbol{t} - \boldsymbol{b} - \boldsymbol{s} = 0$, and
$\boldsymbol{t} + \boldsymbol{b} - \boldsymbol{r} = 0$.
This leads to the quadratic penalty formulation
\begin{equation}
\begin{aligned}
\boldsymbol{x}_1^\star,\boldsymbol{x}_2^\star,&\boldsymbol{x}_3^\star,\boldsymbol{x}_4^\star,\boldsymbol{x}_5^\star
=\\
&\arg\min_{\boldsymbol{x}_1,\boldsymbol{x}_2,\boldsymbol{x}_3,\boldsymbol{x}_4,\boldsymbol{x}_5}
\;
\left\|
A\Phi \boldsymbol{x}_1 - g
\right\|_{2}^2
+
\alpha \mathbf{1}^\top \boldsymbol{x}_3
+
\mu \|\boldsymbol{x}_2 - B\Phi \boldsymbol{x}_1\|_{2}^2
 \\
& +
\nu \left(
\|\boldsymbol{x}_3 - \boldsymbol{x}_2 - \boldsymbol{x}_4\|_{2}^2
+
\|\boldsymbol{x}_3 + \boldsymbol{x}_2 - \boldsymbol{x}_5\|_{2}^2
\right), \qquad \nu>0.
\end{aligned}
\end{equation}
Therefore, there are multiple possible ways to recast an $\ell^1$-regularized inverse problem as a quadratic optimization problem. The auxiliary-variable formulation above avoids the explicit absolute value by introducing the variables $\boldsymbol{b}$, $\boldsymbol{t}$, $\boldsymbol{s}$, and $\boldsymbol{r}$ together with quadratic penalty terms. However, each additional continuous variable must be represented by binary variables in the QUBO encoding, and therefore requires additional qubits in the corresponding QVaR formulation. Although such reformulations are mathematically valid, they may be costly from a quantum-resource perspective. For this reason, we adopt the previous formulation, which introduces fewer auxiliary variables and therefore leads to a more compact binary representation.

The reformulations presented here are classical in optimization and inverse problems, where variational formulations are routinely expressed through data fidelity and regularization terms. The novelty in the present context lies not in the reformulation itself, but in its role as a bridge to quantum optimization. While modern regularization strategies often require specialized algorithms and substantial analytical effort, many such problems can be systematically transformed into quadratic optimization problems. This enables a unified treatment within QUBO-based frameworks and provides a systematic pathway for translating inverse problems into quantum-compatible optimization models.

\subsection{Quantum Sensitivity and Bayesian Uncertainty}\label{sec:Uncertainty}

In this section, we relate the distribution of solutions arising from early termination of the quantum algorithm to the uncertainty inherent in linear inverse problems with Tikhonov regularization.

One way to characterize the ill-posedness in inverse problems is through the Bayesian framework. In this setting, the inverse problem \eqref{eqn:InvProb} is reformulated as a statistical model $g = Af+\delta$ where $g,f$ and $\delta$ are now random variables.
This formulation implicitly encodes the regularization parameter $\alpha$ through the noise distribution; for instance, in the case of Tikhonov regularization, one assumes $\delta \sim \mathcal{N}(0, \tfrac{1}{\alpha} I)$ and $g$ denotes the measurement random variable.

Within this framework, the solution to the inverse problem is given by the conditional distribution $f \mid g $, typically characterized by its density $\pi_{f \mid g}$. The variational formulation \eqref{eqn:VarProb} then appears as the negative log-density (up to an additive constant) of this posterior distribution:
\[
\pi_{f \mid g} (f) \propto \exp\left( -\frac{1}{2\alpha}\| Af - g \|^2_{2} - \frac{1}{2} \| Bf \|^2_{2} \right).
\]
Given the change of variable introduced in the previous section, we obtain the posterior density
\[
\pi_{\boldsymbol{x} \mid g}(\boldsymbol{x} )
\;\propto\;
\exp\!\left(
- \frac{1}{2\alpha} \|A \Phi \boldsymbol{x} - g\|_{2}^2
- \frac{1}{2}\|\boldsymbol{x}\|_{2}^2
\right),
\]
where $\boldsymbol{x}$ is now the solution random variable. A standard way to quantify uncertainty in this setting is through the covariance structure of the posterior distribution. By completing the square in the exponent, we obtain
\begin{equation}
\pi_{\boldsymbol{x} \mid g}(\boldsymbol{x} )
\;\propto\;
\exp\!\left(
-\frac{1}{2} (\boldsymbol{x} - \boldsymbol{x}_{\text{MAP}})^\top
C_{\text{post}}^{-1}
(\boldsymbol{x} - \boldsymbol{x}_{\text{MAP}})
\right),
\end{equation}
where $
\boldsymbol{x}_{\text{MAP}}
=
C_{\text{post}} \, \Phi^\top A^\top y
$, and 
$C_{\text{post}}
=
\left(
\Phi^\top A^\top A \Phi + \alpha I
\right)^{-1}$.

Assuming that $\Phi^\top A^\top A \Phi + \alpha I$ is strictly positive definite, the posterior covariance matrix $C_{\text{post}}$ is symmetric positive definite and admits an eigendecomposition with eigenvalues $\lambda_1 \geq \lambda_2 \geq \cdots > \lambda_d > 0$ and corresponding eigenvectors $\{\boldsymbol{v}_i\}_{i=1}^d$. These eigenvalues characterize the reconstruction uncertainty: larger eigenvalues correspond to directions in parameter space with higher uncertainty, while smaller eigenvalues indicate directions that are more strongly constrained by the data. Consequently, the spectrum of $C_{\text{post}}$ describes how uncertainty is distributed across different modes and is closely linked to the stability and ill-posedness of the underlying inverse problem.

We now show that these eigendirections also influence the distribution of solutions produced by QUBO-based quantum optimization when the Hamiltonian evolution is terminated early. Note that the QUBO objective $E$ in \eqref{eq:QUBO} coincides with the negative log-posterior $N(\boldsymbol{x}) := -\log \pi_{\boldsymbol{x} \mid g}(\boldsymbol{x})$ up to an additive constant independent of $\boldsymbol{x}$.

Following \eqref{eq:final-hamiltonian}, the action of the final Hamiltonian on a computational basis state $\boldsymbol{e}_{\boldsymbol{z}}$ is $
\mathcal{H}_F \boldsymbol{e}_{\boldsymbol{z}}
=
E(\boldsymbol{z}) \boldsymbol{e}_{\boldsymbol{z}}
=
N(\boldsymbol{x}(\boldsymbol{z})) \boldsymbol{e}_{\boldsymbol{z}},
$
where constants independent of $\boldsymbol{z}$ are omitted. If the posterior mode is representable in the chosen parametrization, i.e., if there exists $\boldsymbol{z}^\star\in\{0,1\}^N$ such that $\boldsymbol{x}_{\mathrm{MAP}}=\boldsymbol{x}(\boldsymbol{z}^\star)$, then $\boldsymbol{z}^\star$ minimizes \eqref{eq:QUBO}, and the corresponding reconstruction equals $\boldsymbol{x}_{\mathrm{MAP}}$.

To understand how the energy behaves near the ground state, consider a perturbation along an eigenvector $\boldsymbol{v}_i$ of $C_{\mathrm{post}}$, i.e., $\boldsymbol{x} := \boldsymbol{x}_{\mathrm{MAP}} + s \boldsymbol{v}_i$. Using the parametrization $\boldsymbol{x} = \Phi \boldsymbol{z}$ and assuming that $\Phi$ is invertible, this corresponds to $\boldsymbol{z} := \boldsymbol{z}^\star + s \, \Phi^{-1} \boldsymbol{v}_i$, where $\boldsymbol{x}_{\mathrm{MAP}} = \Phi \boldsymbol{z}^\star$.

The corresponding energy increase is given by
\begin{equation} \label{eq:energy-excess}
\Delta E := E(\boldsymbol{x}) - E(\boldsymbol{x}_{\mathrm{MAP}})
= \frac{1}{2} (s \boldsymbol{v}_i)^T C_{\mathrm{post}}^{-1} (s \boldsymbol{v}_i)
= \frac{s^2}{2} \boldsymbol{v}_i^T C_{\mathrm{post}}^{-1} \boldsymbol{v}_i
= \frac{s^2}{2 \lambda_i},
\end{equation}
where we used that $C_{\mathrm{post}} \boldsymbol{v}_i = \lambda_i \boldsymbol{v}_i$ and that the eigenvectors are normalized.

Therefore, in directions corresponding to larger eigenvalues $\lambda_i$ (i.e., higher posterior uncertainty), the energy increases more slowly, leading to smaller local energy gaps. Conversely, directions with smaller eigenvalues (lower uncertainty) exhibit a steeper increase in energy and thus larger energy gaps.

We now connect this observation to the adiabatic theorem under finite-time evolution. At time $T$, the quantum state can be written as $\psi(T)=\sum_{\boldsymbol{z}} c_{\boldsymbol{z}}(T)\boldsymbol{e}_{\boldsymbol{z}}$, where $\sum_{\boldsymbol{z}} |c_{\boldsymbol{z}}(T)|^2 = 1$. By the Born rule \cite{sakurai1986modern}, the probability of measuring $\boldsymbol{e}_{\boldsymbol{z}}$ is $q_T(\boldsymbol{z}) := |\langle \boldsymbol{e}_{\boldsymbol{z}}, \psi(T) \rangle|^2 = |c_{\boldsymbol{z}}(T)|^2$. If $\boldsymbol{z}^\star$ denotes the minimizer, then the probability of not observing it is $p_T(\boldsymbol{z}^\star) := 1 - q_T(\boldsymbol{z}^\star) = 1 - |c_{\boldsymbol{z}^\star}(T)|^2$.

Let $\boldsymbol{z}_T$ denote the random variable on $\{0,1\}^N$ with probability law $q_T$, and define the corresponding reconstruction $\boldsymbol{x}_T := C \boldsymbol{z}_T$, with $C$ the binary map matrix. For a normalized eigenvector $\boldsymbol{v}_i$ of $C_{\mathrm{post}}$, define the scalar random variable $S_T^i := \boldsymbol{v}_i^T (X_T - \boldsymbol{x}_{\mathrm{MAP}})$, which measures the displacement along the eigendirection $\boldsymbol{v}_i$. Using \eqref{eq:energy-excess}, the corresponding energy excess is $\Delta E_i = \frac{1}{2} (S_T^i \boldsymbol{v}_i)^T C_{\mathrm{post}}^{-1} (S_T^i \boldsymbol{v}_i) = (S_T^i)^2/2\lambda_i$.

Taking expectations, we obtain $\mathbb{E}[(S_T^i)^2] = 2 \lambda_i \mathbb{E}(\Delta E_i)$, and therefore 
\begin{equation} \label{eq:var-Si}
\mathrm{Var}(S_T^i) = 2 \lambda_i \mathbb{E}(\Delta E_i) - (\mathbb{E}(S_T^i))^2 \leq 2 \lambda_i \mathbb{E}(\Delta E_i),
\end{equation}
with equality achieved when $\mathbb{E}(S_T^i) = 0$ in an ideal case.

Now, let us evaluate $\mathbb E(\Delta E_i)$. Since the state at time $T$ obeys the law $q_T$, we can write this expectation as
\begin{equation}
    \mathbb E(\Delta E) = \sum_{\boldsymbol{z}} q_T(\boldsymbol{z}) ( E(\boldsymbol{x(\boldsymbol{z})}) - E(\boldsymbol{\boldsymbol{x}_{\text{MAP}}}) )=\sum_{\boldsymbol{z}\not = \boldsymbol{z}^\star} q_T(\boldsymbol{z}) ( E(\boldsymbol{x(\boldsymbol{z})}) - E(\boldsymbol{\boldsymbol{x}_{\text{MAP}}}) ).
\end{equation}
Recall $\gamma$ being the first energy gap, and assume that there is a maximum total energy $E_{\text{max}}$ such that we can bound the expected energy gap as
\begin{equation} \label{eq:energy-bounds}
      \mathbb E(\Delta E_i) \leq \mathbb E(\Delta E) \leq E_{\text{max}} \sum_{\boldsymbol{z}\not = \boldsymbol{z}^\star} q_T(\boldsymbol{z}).
\end{equation}
However, $\sum_{\boldsymbol{z}\not = \boldsymbol{z}^\star} q_T(\boldsymbol{z}) = p_T(\boldsymbol{z}^\star)$, and from \ref{thm:adiabatic} we can bound this by
\begin{equation} \label{eq:adiabatic-simple}
    p_T(\boldsymbol{z}^\star) \leq \frac{C}{T^2\gamma^4},
\end{equation}
where the smoothness assumption of Hamiltonian interpolation is absorbed in the constant $C$. Plugging \eqref{eq:adiabatic-simple} and \eqref{eq:energy-bounds} into \eqref{eq:var-Si} we obtain the bounds for the directional distribution of the state along eigen directions of the covariance
\begin{equation} \label{eq:QA-variance}
    \mathrm{Var}(S_T^i) \leq C' \frac{ \lambda_i }{T^2 \gamma^4},
\end{equation}
with $C' = 2 E_{\text{max}} C$. Therefore, the variances along covariance eigendirection $\boldsymbol{v}_i$ is bounded by the Bayesian posterior variance $\lambda_i$, proportional to the adiabatic error factor depending on runtime and the minimum spectral gap. We summarize the discussion above into the theorem below.

\begin{theorem}[Directional and total variance bounds for early-terminated quantum annealing]
Let \(C_{\mathrm{post}}\) be the posterior covariance with eigendecomposition \(C_{\mathrm{post}} v_i = \lambda_i v_i\), where \(\{v_i\}_{i=1}^d\) is an orthonormal basis and \(\lambda_i>0\). Let \(\boldsymbol z_T\) be the random binary variable obtained by measuring the early-terminated state \(\psi(T)\), and define \(\boldsymbol{x}_T=C \boldsymbol z_T\). For each direction, define \(S_T^i := v_i^\top (\boldsymbol{x}_T - x_{\mathrm{MAP}})\). Assume the adiabatic failure probability satisfies \(p_T \le C/(T^2 \gamma ^4)\), and that the excess energy is bounded by \(E_{\max}\). Then the directional variances satisfy \eqref{eq:QA-variance}.
Moreover, the total variance satisfies
\begin{equation}
\operatorname{Tr}(\operatorname{Cov}(X_T))
\le
C'
\frac{\operatorname{Tr}(C_{\mathrm{post}})}{T^2 \gamma^4}.
\end{equation}
\end{theorem}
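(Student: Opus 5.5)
The plan is to follow the chain of inequalities already assembled in the discussion preceding the statement, making each link explicit, and then sum over an orthonormal eigenbasis to obtain the trace bound.

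\textbf{Step 1: directional identity.} Let \(\boldsymbol{x}_T=C\boldsymbol z_T\). For each \(i\), the quadratic structure of the energy around \(x_{\mathrm{MAP}}\) gives
\[
E(\boldsymbol x)-E(x_{\mathrm{MAP}})=\tfrac12(\boldsymbol x-x_{\mathrm{MAP}})^\top C_{\mathrm{post}}^{-1}(\boldsymbol x-x_{\mathrm{MAP}}).
\]
Expanding \(\boldsymbol x_T-x_{\mathrm{MAP}}=\sum_j S_T^j v_j\) in the orthonormal eigenbasis yields
\[
\Delta E=\sum_j \frac{(S_T^j)^2}{2\lambda_j}.
\]
This is a sharper form of \eqref{eq:energy-excess}. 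In particular \(\Delta E_i:=(S_T^i)^2/(2\lambda_i)\le \Delta E\) pointwise, since every summand is nonnegative. This justifies the first inequality of \eqref{eq:energy-bounds} without any extra argument.

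\textbf{Step 2: expectation bound.} \(\Delta E\) vanishes at \(\boldsymbol z^\star\) and is at most \(E_{\max}\) elsewhere. Hence \(\mathbb E(\Delta E)\le E_{\max}\,p_T(\boldsymbol z^\star)\). Combining this with the assumed adiabatic failure bound \(p_T\le C/(T^2\gamma^4)\), which comes from Theorem~\ref{thm:adiabatic} via \(1-|\langle\phi_1,\psi\rangle|^2\le\varepsilon^2\), gives
\[
\mathbb E(\Delta E_i)\le \mathbb E(\Delta E)\le \frac{E_{\max}C}{T^2\gamma^4}.
\]
Then \(\mathrm{Var}(S_T^i)\le\mathbb E[(S_T^i)^2]=2\lambda_i\mathbb E(\Delta E_i)\). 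This is \eqref{eq:var-Si}, and it yields \eqref{eq:QA-variance} with \(C'=2E_{\max}C\).

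\textbf{Step 3: trace.} Since \(\{v_i\}\) is orthonormal,
\[
\operatorname{Tr}(\operatorname{Cov}(\boldsymbol x_T))=\sum_i v_i^\top\operatorname{Cov}(\boldsymbol x_T)v_i=\sum_i\mathrm{Var}(S_T^i),
\]
where we use that \(v_i^\top x_{\mathrm{MAP}}\) is deterministic and so does not affect the variance. Summing \eqref{eq:QA-variance} over \(i\) and using \(\sum_i\lambda_i=\operatorname{Tr}(C_{\mathrm{post}})\) gives the claim.

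\textbf{Main obstacle.} The delicate point is Step 2 summed in Step 3. Naively summing the directional bounds uses \(\mathbb E(\Delta E_i)\le\mathbb E(\Delta E)\) separately for each \(i\), which is valid but loose. The per-direction argument also implicitly requires the quadratic identity for \(\Delta E\) to hold on all sampled configurations, i.e.\ that \(E\) is exactly the Gaussian negative log-posterior (up to a constant) on the encoded grid, and that \(x_{\mathrm{MAP}}=C\boldsymbol z^\star\) is representable. I would state these as standing assumptions, both inherited from the preceding discussion. I would also interpret ``excess energy bounded by \(E_{\max}\)'' as \(\max_{\boldsymbol z}(E(\boldsymbol z)-E(\boldsymbol z^\star))\le E_{\max}\), which is finite because \(\{0,1\}^N\) is finite.
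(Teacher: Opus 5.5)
Your proposal is correct and follows the same chain as the paper: $\mathrm{Var}(S_T^i)\le 2\lambda_i\mathbb{E}(\Delta E_i)\le 2\lambda_i E_{\max}p_T$, assembled in the discussion preceding the theorem, gives the directional bound \eqref{eq:QA-variance}, and summing over $i$ with $\operatorname{Tr}(C_{\mathrm{post}})=\sum_i\lambda_i$ gives the trace bound, which is all the paper's proof states. Your expansion $\Delta E=\sum_j (S_T^j)^2/(2\lambda_j)$ and the identity $\operatorname{Tr}(\operatorname{Cov}(\boldsymbol x_T))=\sum_i\mathrm{Var}(S_T^i)$ make explicit two steps the paper leaves implicit (the first inequality in \eqref{eq:energy-bounds}, and the passage from directional to total variance), and your standing assumptions (the exact quadratic identity on the grid and representability of $x_{\mathrm{MAP}}$) are the ones the paper's derivation tacitly uses.
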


\begin{proof}
The final inequality is obtained by the fact that $\operatorname{Tr}(C_{\text{post}}) = \sum_i \lambda_i$.
\end{proof}

This result suggests that the geometry of quantum samples is aligned with the posterior geometry. However, we need 
another assumption to exactly extract Bayesian uncertainty from quantum samples. 
One way to approximate the distribution $q_T(\boldsymbol{z})$ is
\begin{equation}
    q_T(\boldsymbol{z})
    \;\approx\;
    \frac{1}{\mathcal Z_T}
    \exp\!\left( - \kappa E(\boldsymbol{z})  \right),
\end{equation}
where $\mathcal Z_T$ is a normalization constant and $\kappa>0$ is an effective inverse temperature that depends on the annealing dynamics. This form corresponds to a Gibbs (Boltzmann) distribution. Such distributions arise naturally in simulated quantum annealing methods based on Monte Carlo techniques, where sampling from a Boltzmann distribution of the problem Hamiltonian is explicit \cite{crosson2021rapid}. Moreover, empirical studies of quantum annealing devices suggest that the output distribution can often be well-approximated by a Gibbs distribution at an effective temperature \cite{nelson2022high}.

Since $E$ is quadratic in the continuous variable $\boldsymbol{x}(\boldsymbol{z})$ (see the arguments above), the induced distribution of $\boldsymbol x_T = C \boldsymbol{z}_T$ is approximately Gaussian in a neighborhood of the MAP point, with covariance
\[
    \operatorname{Cov}(\boldsymbol{x}_T) \approx \kappa^{-1} C_{\text{post}}.
\]
Consequently, the covariance structure of QA samples reflects the posterior covariance up to a scalar temperature factor, allowing estimation of Bayesian uncertainty through calibration of $\kappa$. We close this section by summarizing this into a theorem.
\begin{theorem}[Quantum covariance under Gibbs approximation]
Assume that the quantum output distribution satisfies $q_T(\boldsymbol z)
\propto
\exp(-\kappa E(\boldsymbol z)),
$
for some inverse temperature $\kappa > 0$, and that $E(\boldsymbol x)$ is quadratic in $\boldsymbol x$. Then the induced distribution of $\boldsymbol{x}_T = C\boldsymbol{z}_T$ is approximately Gaussian near the MAP point, with covariance
\begin{equation}
\operatorname{Cov}(\boldsymbol{x}_T)
\approx
\kappa^{-1} C_{\mathrm{post}}.
\end{equation}
\end{theorem}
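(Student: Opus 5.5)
The approach is to push the Gibbs law on binary configurations forward to the reconstruction space and use the identification, already established in this section, of the QUBO energy with the negative log-posterior. Since $E(\boldsymbol z)$ agrees with $N(\boldsymbol x(\boldsymbol z))$ up to an additive constant independent of $\boldsymbol z$, completing the square as in the derivation of $C_{\mathrm{post}}$ gives
\[
E(\boldsymbol z) = \tfrac12 (\boldsymbol x(\boldsymbol z)-\boldsymbol x_{\mathrm{MAP}})^\top C_{\mathrm{post}}^{-1}(\boldsymbol x(\boldsymbol z)-\boldsymbol x_{\mathrm{MAP}}) + \text{const}.
\]
The first step is to substitute this into $q_T(\boldsymbol z)\propto \exp(-\kappa E(\boldsymbol z))$. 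The constant is absorbed into $\mathcal Z_T$. For each point $\boldsymbol x$ in the image of the encoding map $C$, the law of $\boldsymbol x_T$ is then
\[
\mathbb P(\boldsymbol x_T=\boldsymbol x) = \frac{m(\boldsymbol x)}{\mathcal Z_T}\exp\!\left(-\tfrac{\kappa}{2}(\boldsymbol x-\boldsymbol x_{\mathrm{MAP}})^\top C_{\mathrm{post}}^{-1}(\boldsymbol x-\boldsymbol x_{\mathrm{MAP}})\right),
\]
where $m(\boldsymbol x)$ counts the preimages of $\boldsymbol x$ under $C$. For the uniform binary encoding used above, $C$ is injective, so $m\equiv 1$. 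This law is exactly the restriction of the Gaussian density $\mathcal N(\boldsymbol x_{\mathrm{MAP}},\kappa^{-1}C_{\mathrm{post}})$ to the lattice $C\{0,1\}^N$.

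The second step is to pass from this discrete law to the continuous Gaussian. I would treat the lattice as a uniform grid of mesh $h=2^{-k}$ in each coordinate and read the weights as a Riemann-sum discretization of the Gaussian. Moments of the lattice law are ratios of Riemann sums of the form $\sum_{\boldsymbol x} \phi(\boldsymbol x) h^d$ with $\phi(\boldsymbol x)=(\boldsymbol x-\boldsymbol x_{\mathrm{MAP}})(\boldsymbol x-\boldsymbol x_{\mathrm{MAP}})^\top e^{-\kappa Q(\boldsymbol x)}$, where $Q$ is the quadratic form above. These sums converge to the corresponding Gaussian integrals, provided two conditions hold. First, the grid spacing must be small compared with the Gaussian length scales $\sqrt{\lambda_i/\kappa}$. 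Second, the box $[0,1]^d$ must contain a neighbourhood of $\boldsymbol x_{\mathrm{MAP}}$ of radius several multiples of $\sqrt{\lambda_1/\kappa}$, so that truncating the Gaussian tails costs only exponentially small mass. Under these conditions the ratio of moments gives mean $\approx \boldsymbol x_{\mathrm{MAP}}$ and covariance $\approx \kappa^{-1}C_{\mathrm{post}}$. Equivalently, one can diagonalize in the eigenbasis $\{\boldsymbol v_i\}$, in which the exponent separates as $\sum_i \kappa S_i^2/(2\lambda_i)$, and read off $\mathrm{Var}(S^i)\approx\lambda_i/\kappa$ directly.

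The main obstacle is making the "$\approx$" precise, since the statement is deliberately approximate. I would first try an Euler–Maclaurin or Poisson-summation estimate for the discretization error. This gives relative errors of order $O(h^2\kappa/\lambda_d)$ for the lattice sums, which are in fact exponentially small in $\lambda_d/(\kappa h^2)$ for a Gaussian. I would add a Gaussian tail bound to control truncation to the box. If $\Phi\neq I$, or if the encoding is not a product grid, the same argument works after a linear change of variables, with the Jacobian cancelling in the normalized ratio. I expect to state the result with these explicit regime assumptions ($h\ll\sqrt{\lambda_d/\kappa}$, and the MAP point lying deep inside the box) rather than unconditionally.
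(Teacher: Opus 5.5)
Your argument is the same as the paper's. You substitute the completed-square form of the quadratic energy into the Gibbs weights, recognize $\mathcal N(\boldsymbol x_{\mathrm{MAP}},\kappa^{-1}C_{\mathrm{post}})$ restricted to the encoding grid, and add a Riemann-sum/Poisson-summation step with explicit regime conditions ($h\ll\sqrt{\lambda_d/\kappa}$, and $\boldsymbol x_{\mathrm{MAP}}$ lying several multiples of $\sqrt{\lambda_1/\kappa}$ from the boundary of the box) that make precise the ``approximately'' the paper leaves informal. One normalization caveat, inherited from the paper's identification of $E$ with $N$: with the explicit $Q,\boldsymbol p$ of Section~\ref{sec:qvar} one gets $E=(\boldsymbol x-\boldsymbol x_{\mathrm{MAP}})^\top C_{\mathrm{post}}^{-1}(\boldsymbol x-\boldsymbol x_{\mathrm{MAP}})+\mathrm{const}$ without the factor $\tfrac12$, so the literal covariance is $(2\kappa)^{-1}C_{\mathrm{post}}$; this only matters up to the calibration of the effective temperature $\kappa$.
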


\section{Numerical experiments} \label{sec:results}

We evaluate the proposed QVaR formulation on small-scale inverse problems that can be encoded as QUBO instances and implemented within the adiabatic framework described above. The experiments are intended to demonstrate the full pipeline: discretization of the variational problem, construction of the corresponding QUBO objective, implementation of the associated quantum evolution, and reconstruction from measured bitstrings.

We consider the following test cases. The first is a scalar linear inverse problem with Tikhonov regularization, used as a minimal example that allows explicit inspection of the QUBO construction before moving to higher-dimensional settings. The second extends this setting to a low-dimensional linear inverse problem. The third considers a 1D deconvolution problem with $\ell^1$-regularization in a Haar wavelet representation, illustrating the variable-splitting formulation introduced in the QVaR section. Results are presented for simulated and experimental quantum hardware.


In the experiments below, we first reformulate each regularized variational problem as a QUBO problem following the constructions of \ref{sec:qvar}. We then apply the QVaR method in \ref{sec:qvar} to generate quantum samples from the resulting QUBO model. As discussed in \ref{sec:Uncertainty}, finite-time evolution and approximate adiabatic parameters induce a distribution over low-energy configurations, whose variability reflects the quantum sensitivity of the underlying inverse problem. We report both the mean reconstruction and, with a slight abuse of terminology, the MAP reconstruction, defined as the sampled configuration with lowest QUBO energy $E(\boldsymbol z)$.

Simulations are carried out using Qiskit, an open-source Python quantum computing framework developed by IBM \cite{javadi2024quantum}. The simulations are executed on the LUMI supercomputer \cite{lumi-eurohpc}, which allows  simulation of quantum circuits with up to 44 qubits. Hardware experiments are performed through a 53-qubit quantum device (Q50) accessed via the LUMI infrastructure. The system is co-developed by VTT Technical Research Centre of Finland and IQM Quantum Computers. All code and data required to reproduce the numerical experiments are available in a public repository \cite{Rautio2026QVAR}.

\subsection{1-scalar linear inverse problem with Tikhonov regularization} \label{sec:results1}
We begin with a simple scalar inverse problem that serves as a minimal test case for the proposed framework. Although elementary, it contains the essential ingredients of a regularized inverse problem and illustrates how the problem can be formulated and solved using the QVaR approach. Let us consider a scalar inverse problem in which the goal is to recover an unknown scalar $f\in \mathbb R$ from data $g\in \mathbb R$ satisfying
$ Af = g$,
where $A \in \mathbb R$ is known. We use Tikhonov regularization and minimize
\begin{equation}\label{eq:1dinv}
\arg\min_{f \in \mathbb R} (Af - g)^2 +\alpha f^2.
\end{equation}

To obtain a binary optimization problem, we restrict $f$ to the interval
$[L,U]=[0.6,2.0]$ and discretize it using $M=2^n$ uniformly spaced grid points,
\[
f_i = L + i\,\Delta x,
\qquad
\Delta x = \frac{U-L}{M-1},
\qquad
i=0,\dots,M-1.
\]
Each grid point is represented by a binary vector
$\boldsymbol{z}_i=(z_i^1,\dots,z_i^n)\in\{0,1\}^n$ satisfying
$
i=\sum_{j=1}^{n} 2^{j-1}z_i^j.
$
Consequently, each binary vector defines a candidate reconstruction
$
f(\boldsymbol{z})
=
L
+
\Delta x
\sum_{j=1}^{n}2^{j-1}z^j.
$
Substitution into the Tikhonov functional yields
\[
E(\boldsymbol{z})
=
\bigl(A f(\boldsymbol{z})-g\bigr)^2
+
\alpha\, f(\boldsymbol{z})^2.
\] 
The corresponding QUBO coefficients are obtained by expanding this expression, following the general construction described in \ref{sec:qvar}.

\begin{figure}[t]
    \begin{subfigure}[b]{0.2\textwidth}
        \centering
        \includegraphics[width=\textwidth]{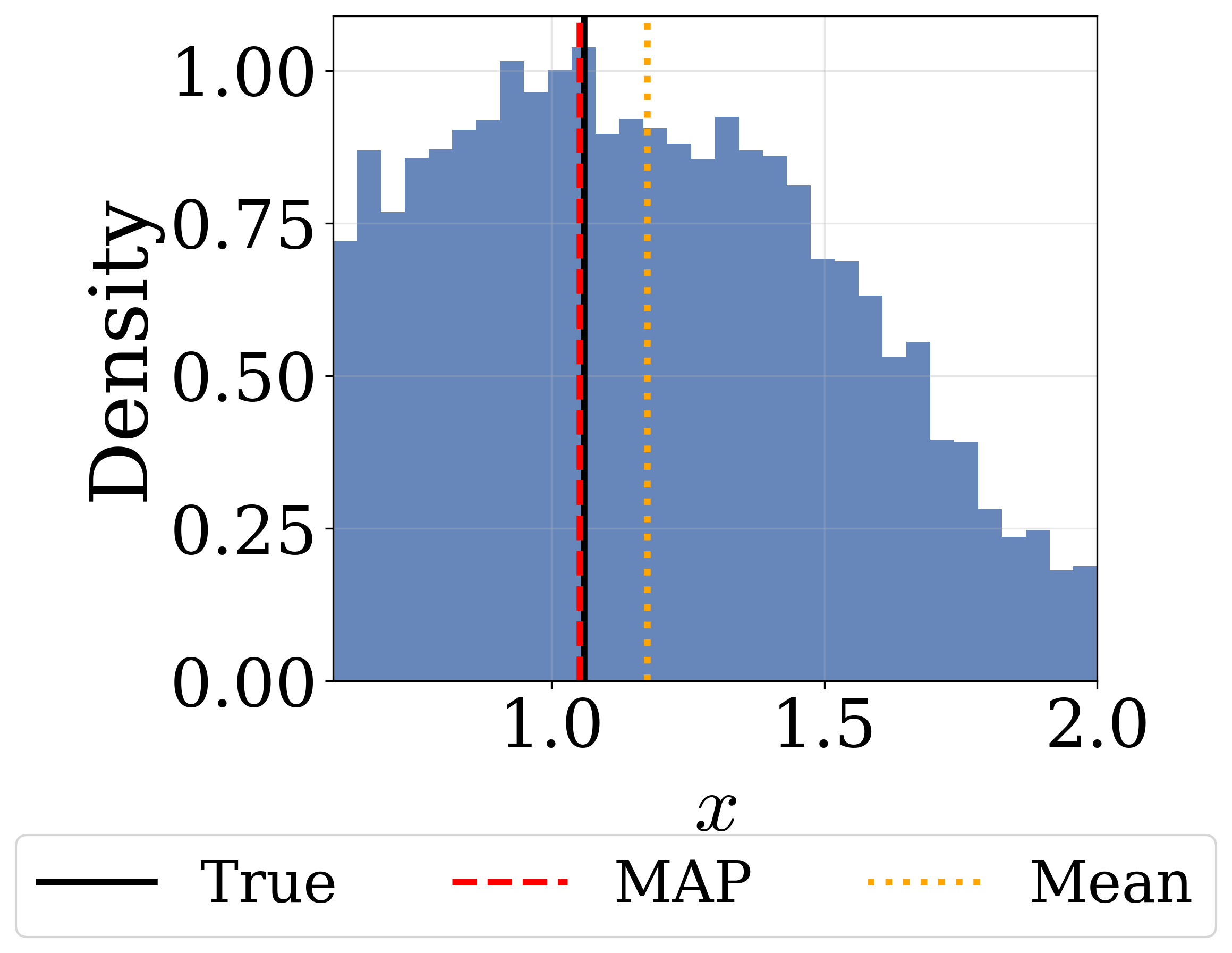}
        \caption{$T=p=2$}
    \end{subfigure}
    \begin{subfigure}[b]{0.2\textwidth}
        \centering
        \includegraphics[width=\textwidth]{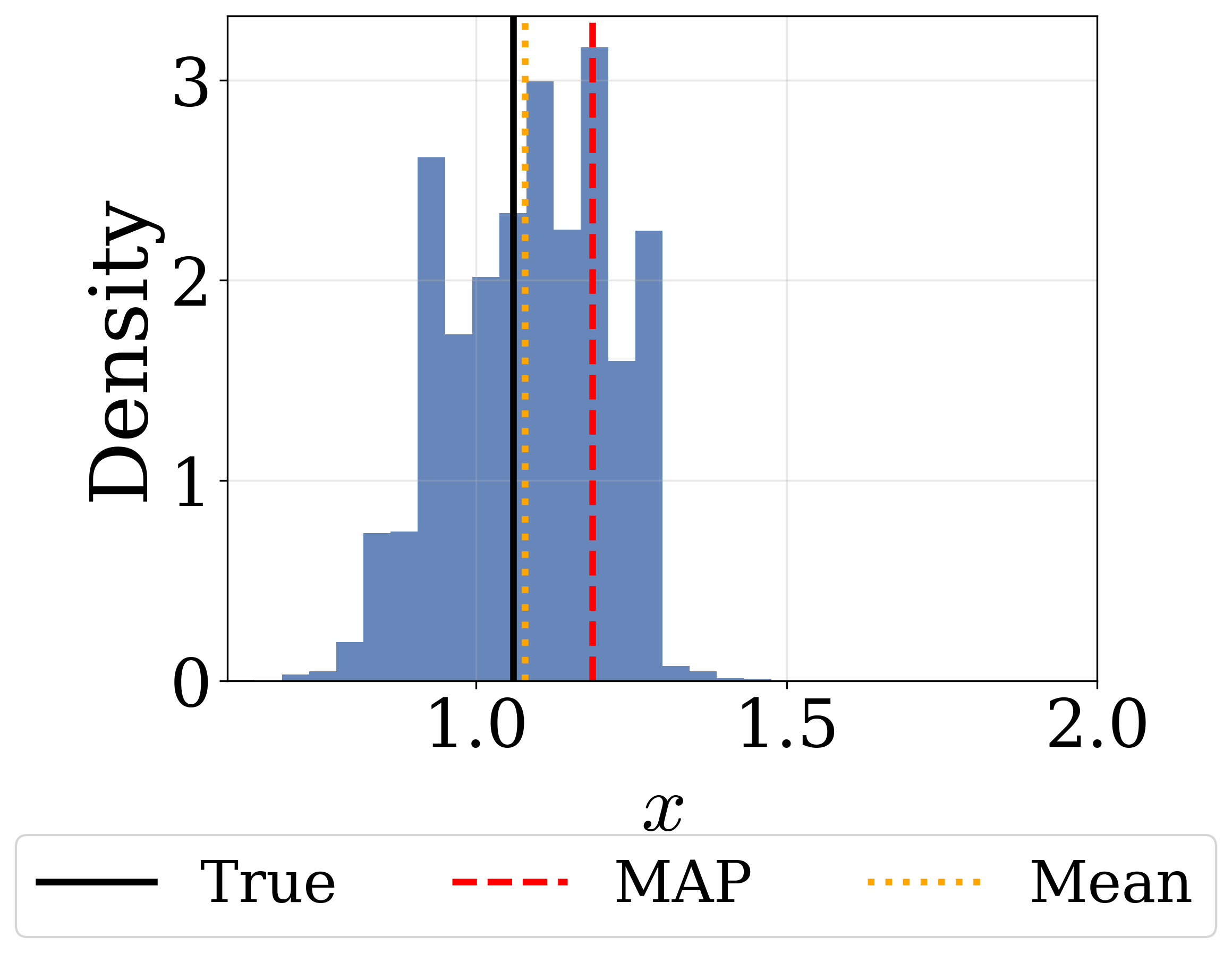}
        \caption{$T=p=100$}
    \end{subfigure}
    \begin{subfigure}[b]{0.2\textwidth}
        \centering
        \includegraphics[width=\textwidth]{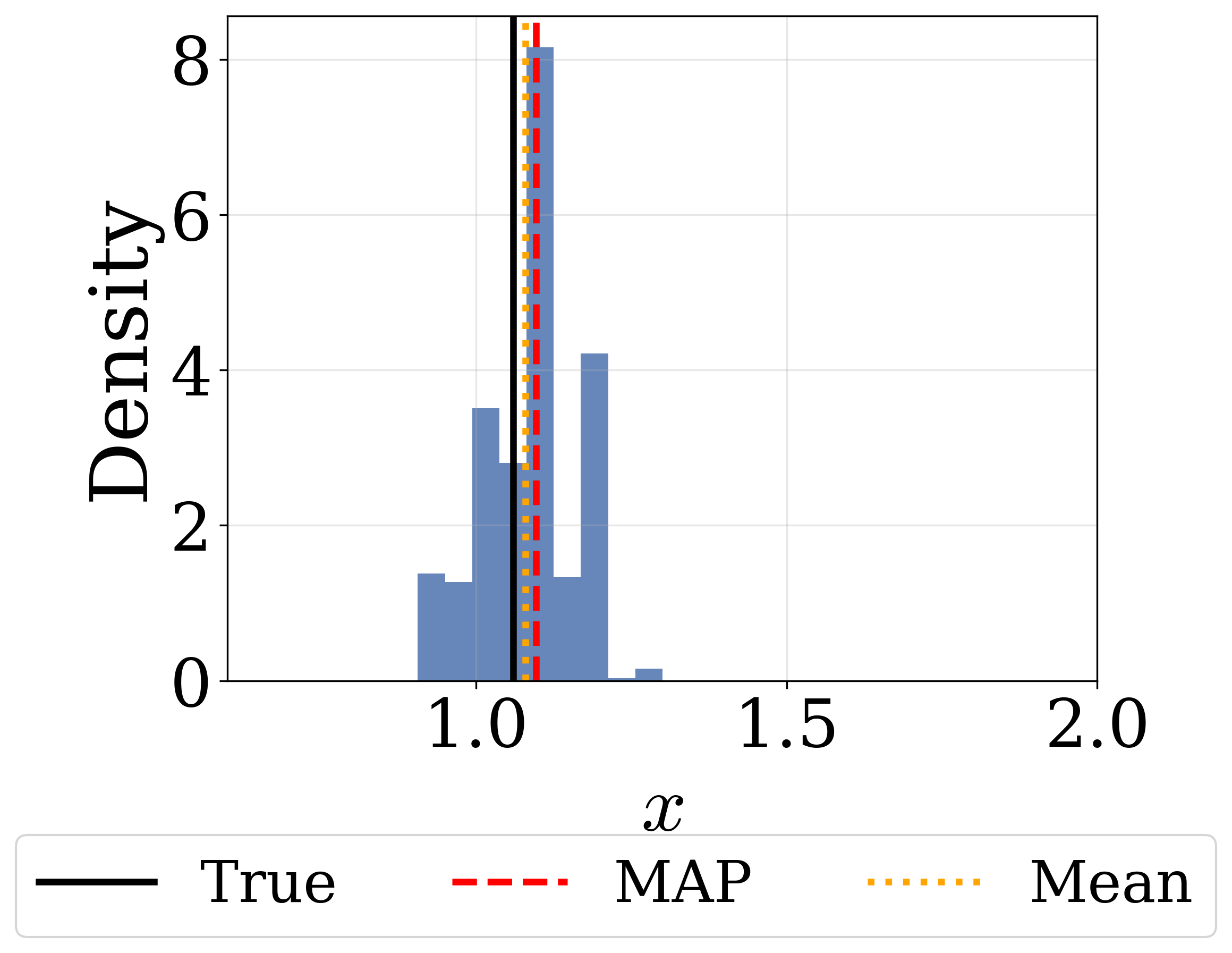}
        \caption{$T=p=1000$}
    \end{subfigure}
    \begin{subfigure}[b]{0.3\textwidth}
        \centering
        $x_{\text{true}} = 1.05962$
        \vspace{0.1cm} \\
        
        \begin{tabular}{c c c}
            \hline
            $T = p$ & $x_{\text{mean}}$ & $x_{\text{MAP}}$ \\
            \hline
            2 & 1.17539 & \textbf{1.05161} \\
            100 & \textbf{1.07898} & 1.18710 \\
            1000 & 1.07953 & 1.09677\\
            \hline
            
        \end{tabular}
        \caption{Numerical results}
    \end{subfigure}
    
    \caption{Simulations for the one-dimensional inverse problem in \eqref{eq:1dinv} when comparing parameter values time $T$ and time discretization steps $p$. As $T$ and $p$ increase, the solution concentrates more sharply around the true value $x_{\text{true}} = 1.05962$.
    }
    \label{fig:experiment1_simulated}
\end{figure}

\begin{figure}[t]
    \begin{subfigure}{0.2\textwidth}
        \centering
        \includegraphics[width=\textwidth]{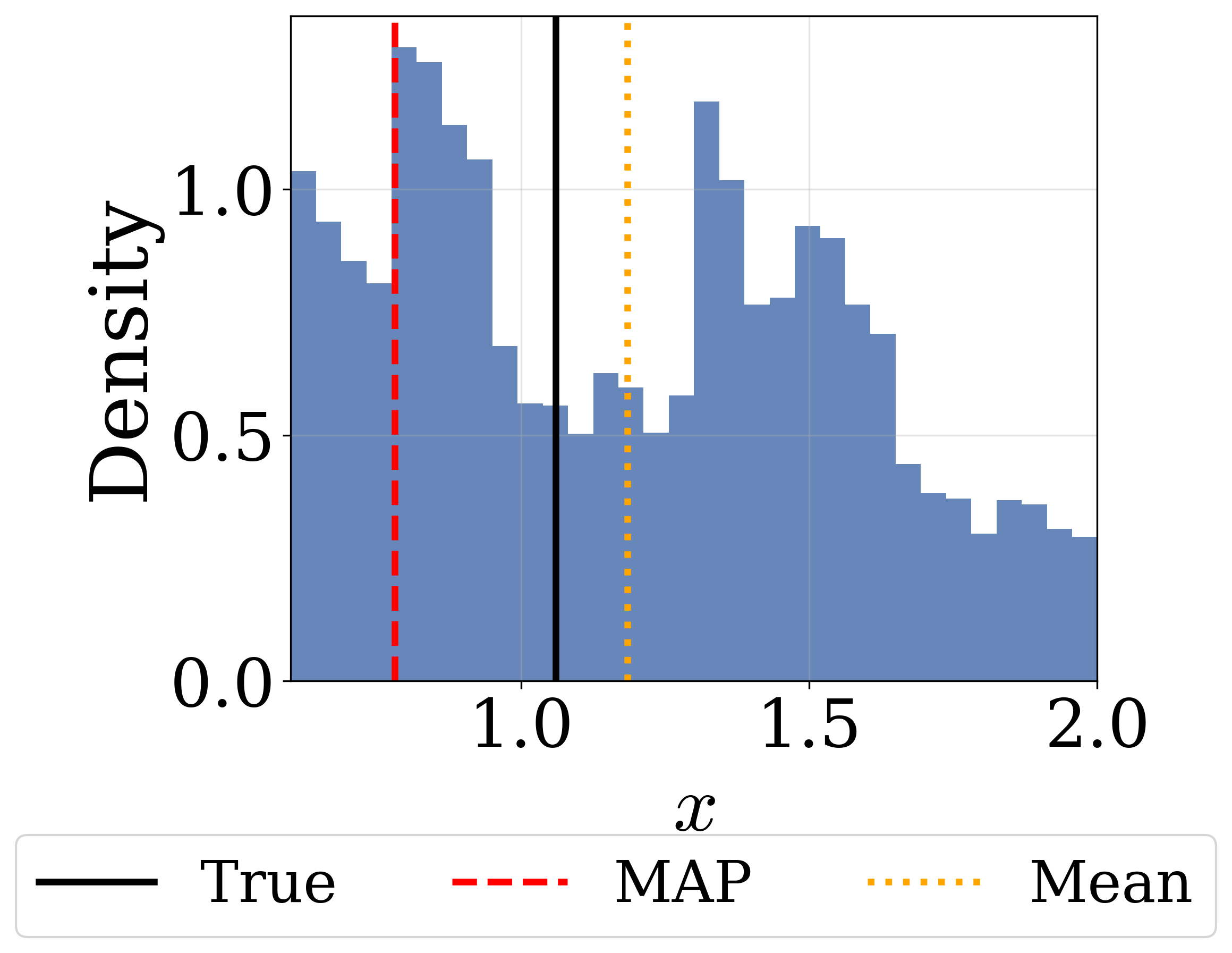}
        \caption{$T=p=2$}
    \end{subfigure}
    \begin{subfigure}{0.2\textwidth}
        \centering
        \includegraphics[width=\textwidth]{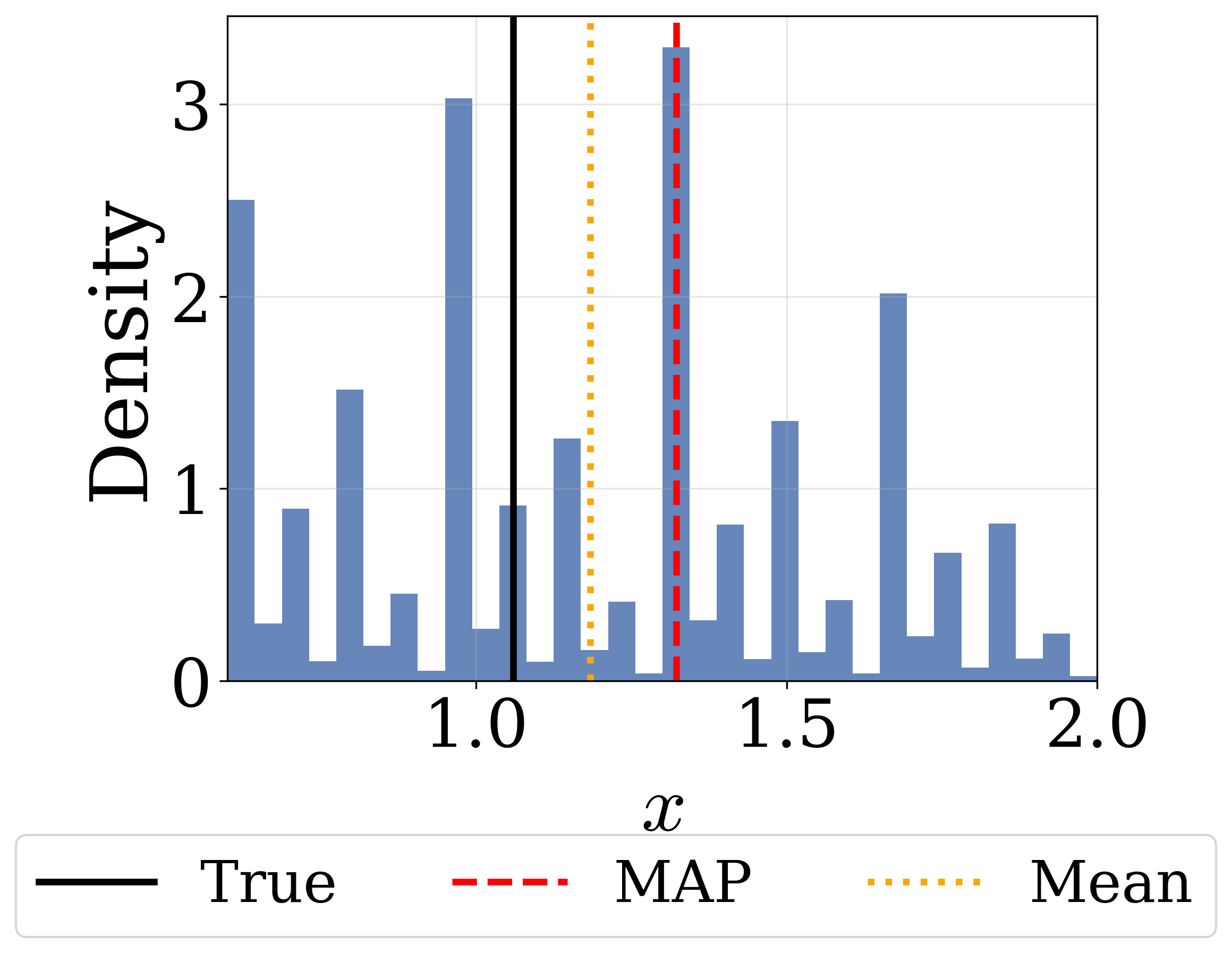}
        \caption{$T=p=100$}
    \end{subfigure}
    \begin{subfigure}{0.2\textwidth}
        \centering
        \includegraphics[width=\textwidth]{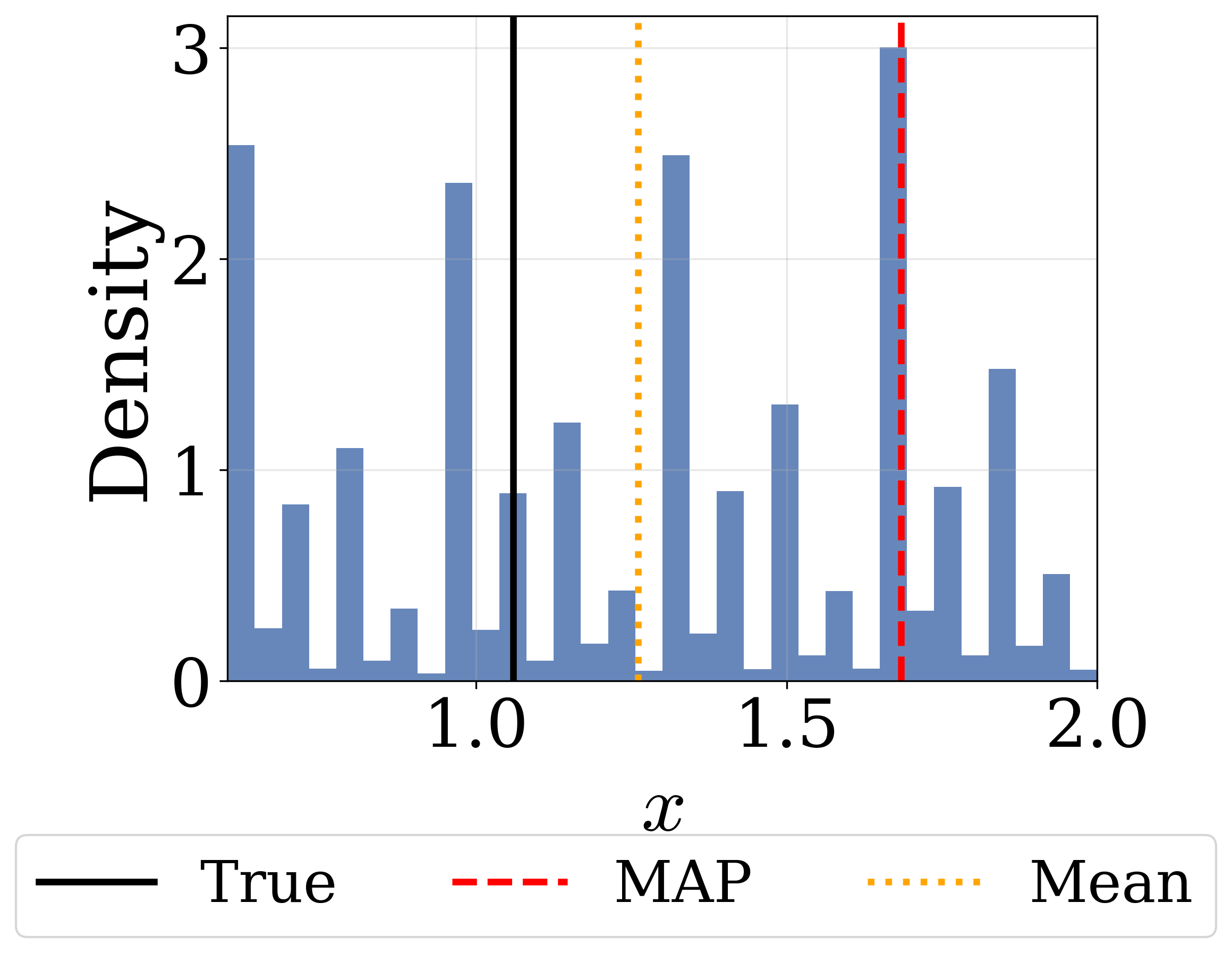}
        \caption{$T=p=200$}
    \end{subfigure}
    \begin{subfigure}{0.28\textwidth}
    \centering
    $x_{\text{true}} = 1.05962$
    \vspace{0.1cm} \\
    \begin{tabular}{c c c}
        \hline
        $T = p$ & $x_{\text{mean}}$ & $x_{\text{MAP}}$ \\
        \hline
        2   & \textbf{1.18413} & 0.78065 \\
        100 & 1.18420          & \textbf{1.32258} \\
        200 & 1.26090          & 1.68387 \\
        \hline
    \end{tabular}
    \caption{Numerical results}
\end{subfigure}
    
    \caption{Results for the one-dimensional inverse problem in \eqref{eq:1dinv} on the VTT Q50 quantum processor. Making the circuit deeper accumulates quantum errors, motivating the choice of smaller $T$ and $p$ values for further experiments.}
    \label{fig:exp1_q50}
\end{figure}


In this numerical experiment, we set $A = 1.7$ and $g = 2.3$, and choose the regularization parameter as $\alpha = 0.8$. Although the analysis in \ref{sec:adiabatic_params} is based on linear interpolation of Hamiltonians, here we use a smooth sine annealing schedule. This choice provided a more consistent performance for the results in this work. Furthermore, we truncate couplings with magnitude below $\epsilon=0.04$ in the simulated case, and $\epsilon=0.15$ for the hardware case. The circuits are executed for 10,000 shots, producing quantum samples over the $2^5$ possible bitstrings. 

We experiment with different choices of the total evolution time $T$ and the number of Trotter steps $p$ used to approximate the continuous annealing evolution over $[0,T]$, for both quantum hardware and noiseless simulation cases. These parameters control the accuracy of the adiabatic approximation: increasing $T$ improves the fidelity of the evolution toward the ground state, while increasing $p$ refines the approximation of the continuous-time dynamics. Insufficient values of either parameter lead to deviations from the ideal evolution.



On the noiseless simulator, increasing $T$ and $p$ sharpens the distribution around the minimizer, as expected from the annealing interpretation (cf. \ref{fig:experiment1_simulated}). However, the Q50 hardware results show the opposite trend for large $T$ and $p$, as presented in \ref{fig:exp1_q50}.
Here, the shallow circuit with small $T$ and $p$ give the most useful estimates, despite being further from the ideal adiabatic limit. In particular, for $T=p=2$, the sample mean remains close to the true solution, even though the MAP estimate is less accurate.  
Deeper circuits accumulate gate and measurement errors, causing the sampled distribution to be dominated by noise.
This motivates the use of small $T,p$ values in the following larger scale experiments, where circuit depth becomes a limiting factor.

\subsection{$3\times2$ linear inverse problem with Tikhonov regularization} \label{sec:results2}
We next consider a low-dimensional linear inverse problem, inspired by examples in \cite{kaipio2005statistical}, with Tikhonov regularization,
\begin{equation}\label{eq:scalar3x2}
    \begin{aligned}
        &\min_f \|Af - g\|^2_2 + \alpha \|x\|^2_2,\\
\text{with}\hspace{6mm}&A=\begin{bmatrix} 1 & -1\\ 1 & -2\\ 2 & 1 \end{bmatrix}, \quad 
g=\begin{bmatrix} 2.1\\ 2.9\\ 1.1 \end{bmatrix}, \quad 
\alpha = 0.5.
    \end{aligned}
\end{equation}
The unknown parameter is $f=(f_1,f_2)\in [0.5,1.5]\times[-1.5,-0.5]$. The corresponding Tikhonov minimizer is
$f^\star \approx (0.9697,-0.8970)$.


Following the construction in \ref{sec:qvar} and choosing $\Phi=I$, the identity matrix, each component of $f$ is discretized using $n=6$ bits per variable, resulting in a total of $2n=12$ qubits. This yields $2^n=64$ candidate values per component and a total of $2^{2n}=4096$ discretized points in the domain. Each bitstring therefore encodes a candidate reconstruction $f(\boldsymbol z)$, and the associated QUBO objective corresponds to the discretized Tikhonov functional.

In contrast to the scalar case, the energy landscape is now defined over a two-dimensional grid, leading to a larger and more structured QUBO problem. This allows us to assess how the sampling-based approach scales with increasing dimensionality while remaining within a regime that is still tractable for both simulation and hardware.



We use $p=4$ time-discretization layers with total evolution time $T=2$,
step size $\Delta t = T/p = 0.5$, and sparsification threshold
$\varepsilon=0.05$.
After sparsification, $32$ out of $66$ possible interaction pairs are
retained.
As discussed in \ref{sec:adiabatic_params}, three sources of error
affect the output distribution: violation of the adiabatic condition,
Trotter discretization error, and hardware gate noise.

We compute the spectral gap $\gamma(s)=E_2(s)-E_1(s)$ of the interpolating Hamiltonian $\mathcal H(s)=(1-s)\mathcal H_I+s\mathcal H_F$ by evaluating $51$ uniformly spaced values of $s\in[0,1]$ and computing the two smallest eigenvalues of $\mathcal H(s)$ in the full $2^{12}=4096$-dimensional Hilbert space using a sparse Lanczos eigensolver \cite{GoVa13}; the result is shown in \ref{fig:spectral_gap}. The gap decreases monotonically from the exact value $\gamma(0)=2$ to $\Delta_{\min}:=\min_{s\in[0,1]}\gamma(s)=0.001020$ at $s=1$, where the two lowest-energy states of $\mathcal H_F$ are nearly degenerate. The value $\gamma(0)=2$ follows from the spectrum of $\mathcal H_I=-\sum_i X_i$, whose two lowest eigenvalues are $-n$ and $-(n-2)$ for $n$ qubits.

Our implementation uses the smooth schedule $s(\tau) = \sin^2(\pi\tau/2)$
with $\tau \in [0,1]$, which is non-linear in $\tau$. The bound
\eqref{eq:T_condition} from \ref{sec:adiabatic_params} was stated
for linear interpolation, where $\partial_s \mathcal{H}(s) = \mathcal{H}_F - \mathcal{H}_I$;
for our non-linear schedule one instead has
$\sup_{\tau \in [0,1]} \|\partial_\tau \mathcal{H}(\tau)\|_{\mathrm{op}}
= (\pi/2)\|\mathcal{H}_F - \mathcal{H}_I\|_{\mathrm{op}}$, where
$\pi/2$ is the maximum of $|\dot s(\tau)| = (\pi/2)\sin(\pi\tau)$.
Applying \eqref{eq:T_condition} up to this order-one constant factor,
we use $\|\mathcal{H}_F - \mathcal{H}_I\|_{\mathrm{op}} \leq
\|\mathcal{H}_F\|_{\mathrm{op}} + \|\mathcal{H}_I\|_{\mathrm{op}}$.
Since $\mathcal{H}_F$ is diagonal, its spectral norm equals
$\max_k |(\mathcal{H}_F)_{kk}| = 1.9234$.
Since $\mathcal{H}_I = -\sum_i X_i$, its spectral norm is
$\|\mathcal{H}_I\|_{\mathrm{op}} = n = 12$ exactly.
By the triangle inequality:
\begin{equation}\label{eq:dH_bound}
  \|\mathcal{H}_F - \mathcal{H}_I\|_{\mathrm{op}} \leq
  \|\mathcal{H}_F\|_{\mathrm{op}} + \|\mathcal{H}_I\|_{\mathrm{op}}
  \approx 1.9234 + 12 = 13.9234.
\end{equation}
Approximating $\gamma(s) \geq \Delta_{\min}$ throughout gives
\begin{equation}\label{eq:T_adiabatic}
  T_{\mathrm{adiabatic}}
  \approx \frac{13.9234}{(0.001020)^2}
  \approx 1.34 \times 10^7.
\end{equation}
With $T = 2$ we have $T / T_{\mathrm{adiabatic}} \approx 1.5\times
10^{-7}$, so the algorithm operates far outside the adiabatic regime
and concentration of probability at the ground state is not guaranteed.
We note that the bound~\eqref{eq:T_adiabatic} is conservative: the
triangle inequality in~\eqref{eq:dH_bound} can be loose, and
approximating $\gamma(s) \approx \Delta_{\min}$ ignores the larger
gap values at $s < 1$ (see \ref{fig:spectral_gap}, right panel).
Nevertheless, since $\Delta_{\min}$ is so small, even a tighter bound
would still yield $T_{\mathrm{adiabatic}} \gg 2$.

Following \ref{sec:adiabatic_params}, the per-layer Trotter error is
$\mathcal{O}(\|\mathcal{H}_F\|_{\mathrm{op}}\|\mathcal{H}_I\|_{\mathrm{op}}\Delta t^2)
= \mathcal{O}(1.9234 \cdot 12 \cdot 0.25)
= \mathcal{O}(5.77)$.
Since operator-norm distances between unitaries are bounded by $2$,
a per-layer bound of $\mathcal{O}(5.77)$ is vacuous and provides no
quantitative guarantee on the Trotter error.
This confirms that the circuit operates well outside the regime
where first-order Trotter bounds are informative, consistent with
the adiabatic analysis above.
We include this estimate for completeness; a meaningful Trotter error
analysis would require higher-order bounds beyond the scope of this
work.

After transpilation to the native gate set of the VTT Q50 processor, the circuit comprises $48$ two-qubit $CZ$ gates at depth $69$.
No published characterisation of the per-gate fidelity of the
VTT Q50 is available to the authors at the time of writing.
As a rough indication, IQM reports typical median two-qubit $CZ$
gate fidelities in the range $99.0\%$--$99.5\%$ for their
superconducting processors~\cite{IQMRadiance}; using $f = 0.993$
as a representative value gives a cumulative gate error of
approximately $1 - 0.993^{48} \approx 28\%$ over $48$ gates,
meaning that a significant fraction of shots may be corrupted
by gate errors alone before readout errors are accounted for.
We emphasise that this is an order-of-magnitude estimate only.

\begin{table}[b!]
\centering
\caption{Comparison of simulated and experimental quantum optimization results and the Euclidean distance to the continuous optimum with $T=2$, $p=4$ and $\varepsilon=0.05$.}
\label{tab:sim_vs_hw}
\begin{tabular}{lcccc}
\hline
 & \multicolumn{2}{c}{Simulation}
 & \multicolumn{2}{c}{VTT~Q50 quantum processor} \\
\cline{2-3}\cline{4-5}
Metric & $(x_1,x_2)$ & Distance & $(x_1,x_2)$ & Distance \\
\hline

Cont. optimum
& $(0.9697,-0.8970)$ & 0
& $(0.9697,-0.8970)$ & 0 \\

Optimal solution
& $(0.9762,-0.8968)$ & 0.0065
& $(0.9762,-0.8968)$ & 0.0065 \\

MAP estimate
& $(1.0873,-0.9127)$ & 0.1187
& $(1.0238,-0.8651)$ & 0.0628 \\

Sample mean
& $(0.9983,-0.9766)$ & 0.0845
& $(0.9494,-1.0030)$ & 0.1080 \\

\hline
\end{tabular}
\end{table}

\ref{tab:sim_vs_hw} summarises the key numerical estimates for both
the noiseless simulation and the VTT~Q50 run.
In both cases, the best-by-cost solution achieves
$\hat{f}=(0.9762,-0.8968)$, deviating from $f^*$ by $\|\hat{f}-f^*\|
\approx 0.007$, which is well within one grid spacing $\Delta\approx
0.016$.

\begin{figure*}[t]
\centering
\graphicspath{{Figures/}}

\setlength{\tabcolsep}{2pt}
\renewcommand{\arraystretch}{0}

\newcommand{\reconw}{0.24\textwidth}

\begin{tabular}{cc}
Simulation& 
VTT~Q50 quantum processor  \\
\includegraphics[width=0.24\linewidth]{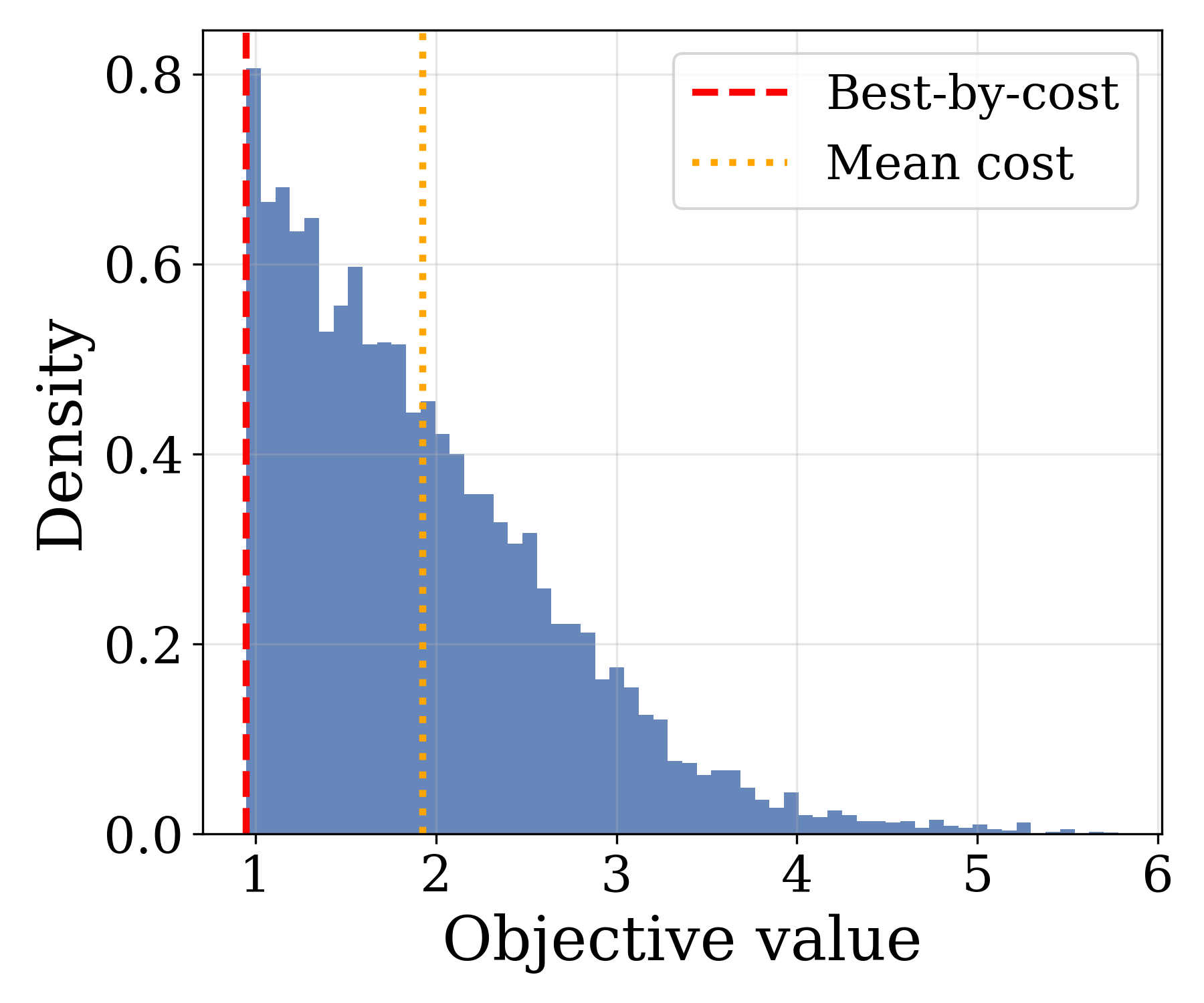} &
\includegraphics[width=0.24\linewidth]{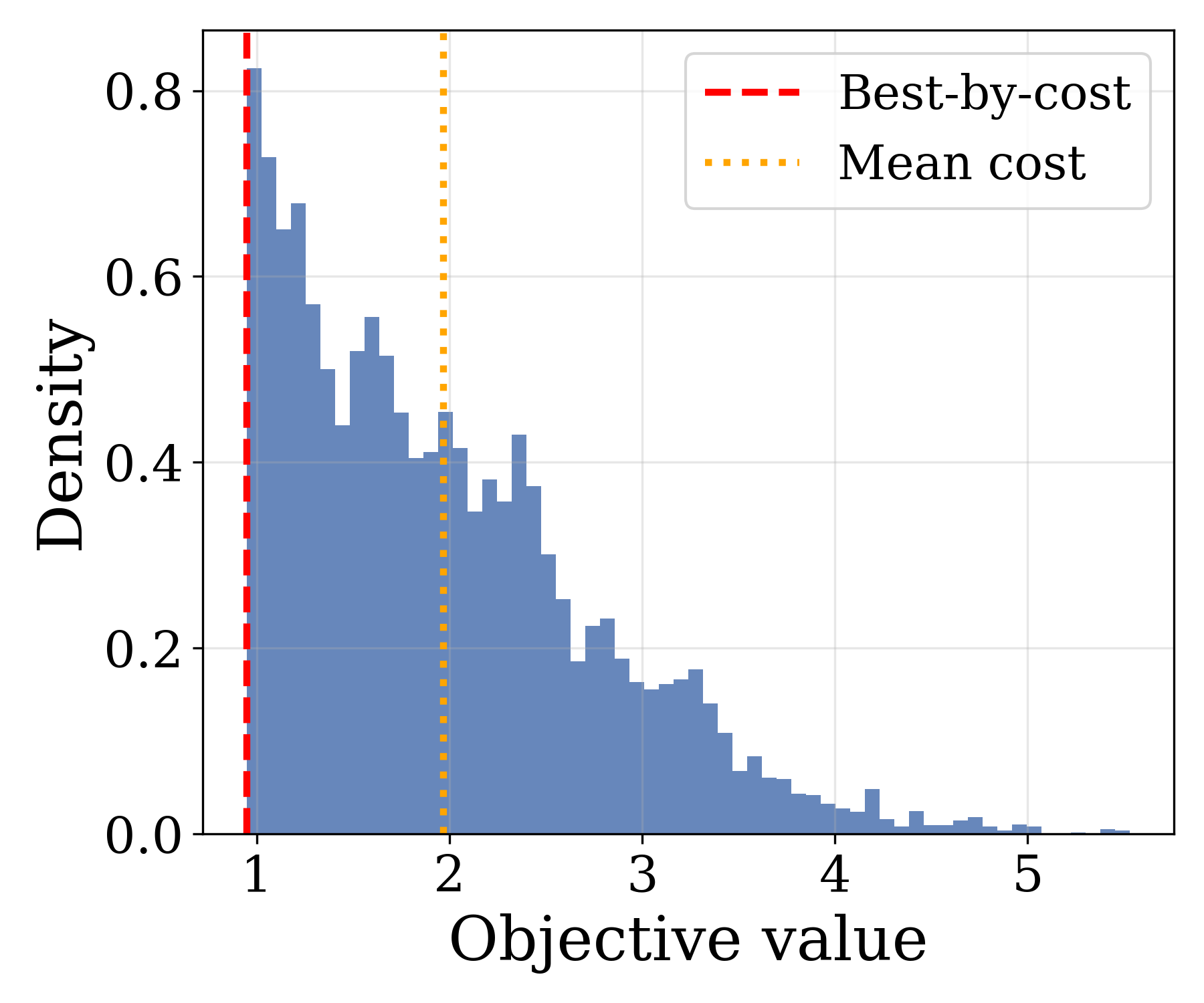}
\end{tabular}
\caption{Cost distributions for simulated and experimental results for the inverse problem in \eqref{eq:scalar3x2} with $T=2$, $p=4$ and $\varepsilon=0.05$. }
\label{fig:cost3x2}
\end{figure*}

\begin{figure*}[t]
\centering
\graphicspath{{Figures/}}

\setlength{\tabcolsep}{2pt}
\renewcommand{\arraystretch}{0}

\newcommand{\reconw}{0.24\textwidth}

\begin{tabular}{cc}
Simulation& 
VTT~Q50 quantum processor  \\[6pt]
\subcaptionbox{$x_1$ values\label{fig:boundary_input_smooth_b}}{%
\includegraphics[width=0.24\linewidth]{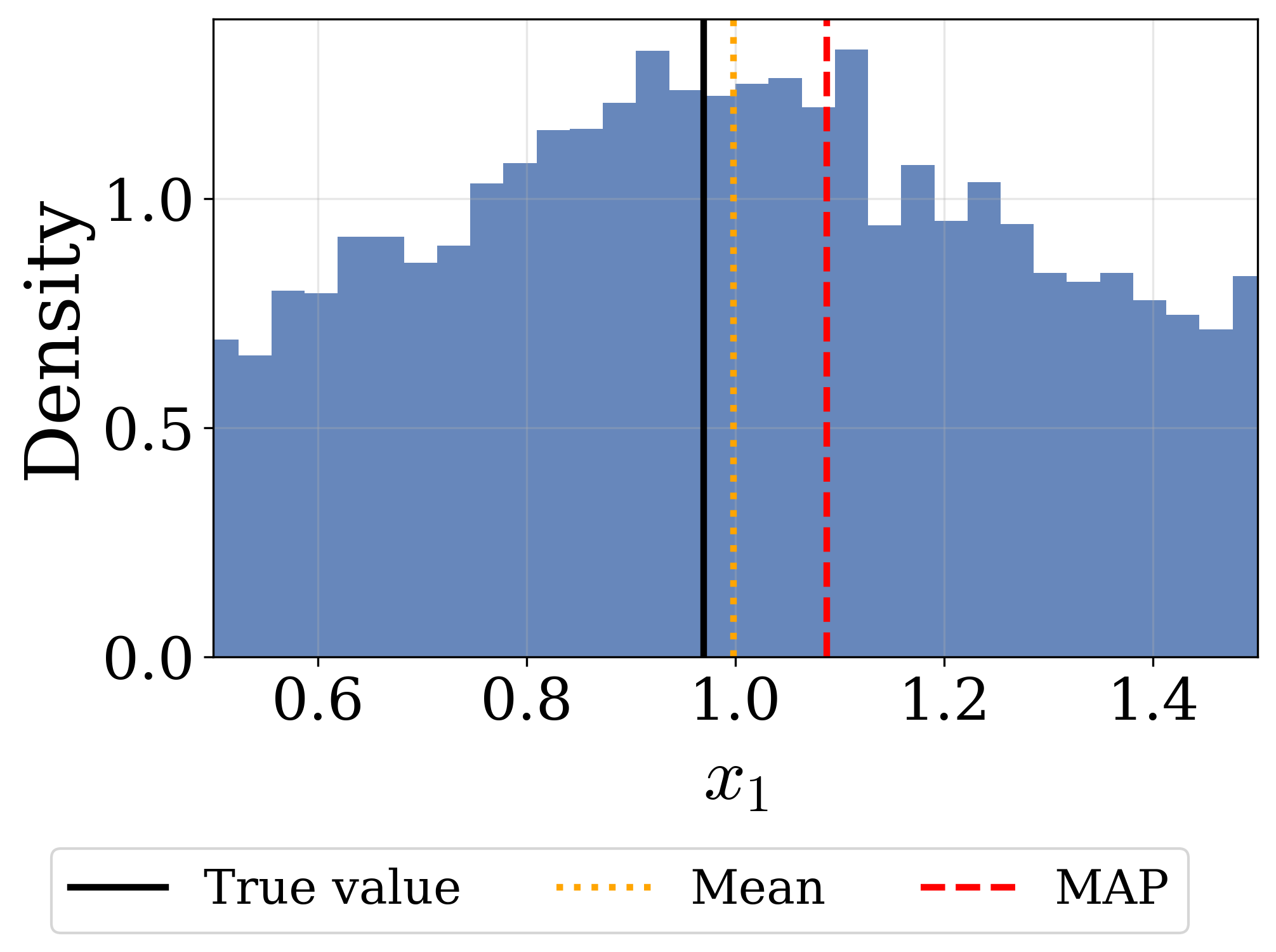}}

\subcaptionbox{$x_2$ values\label{fig:boundary_input_smooth_c}}{%
\includegraphics[width=0.24\linewidth]{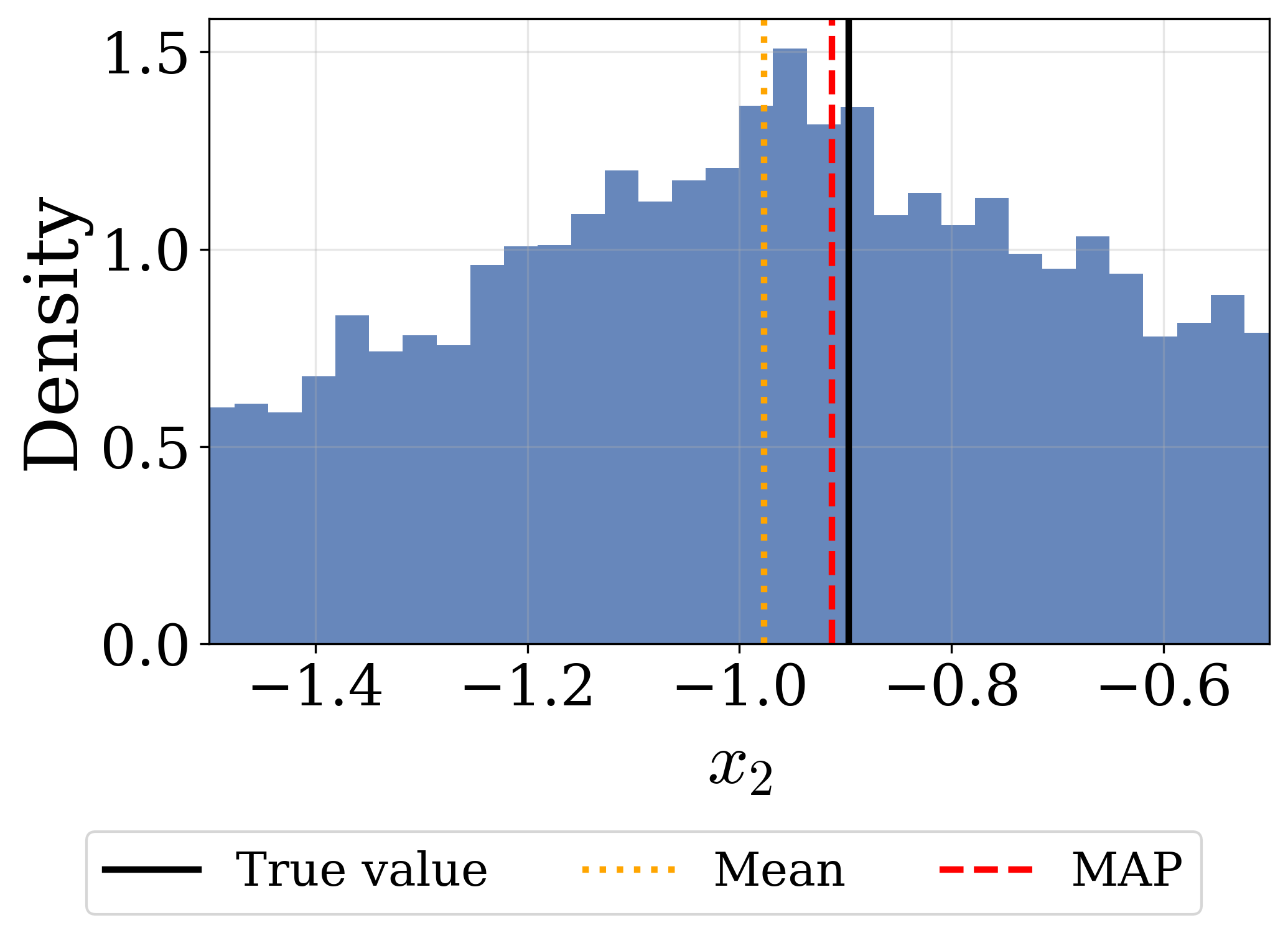}}& 

\subcaptionbox{$x_1$ values\label{fig:obj3x2_x1}}{%
\includegraphics[width=0.24\linewidth]{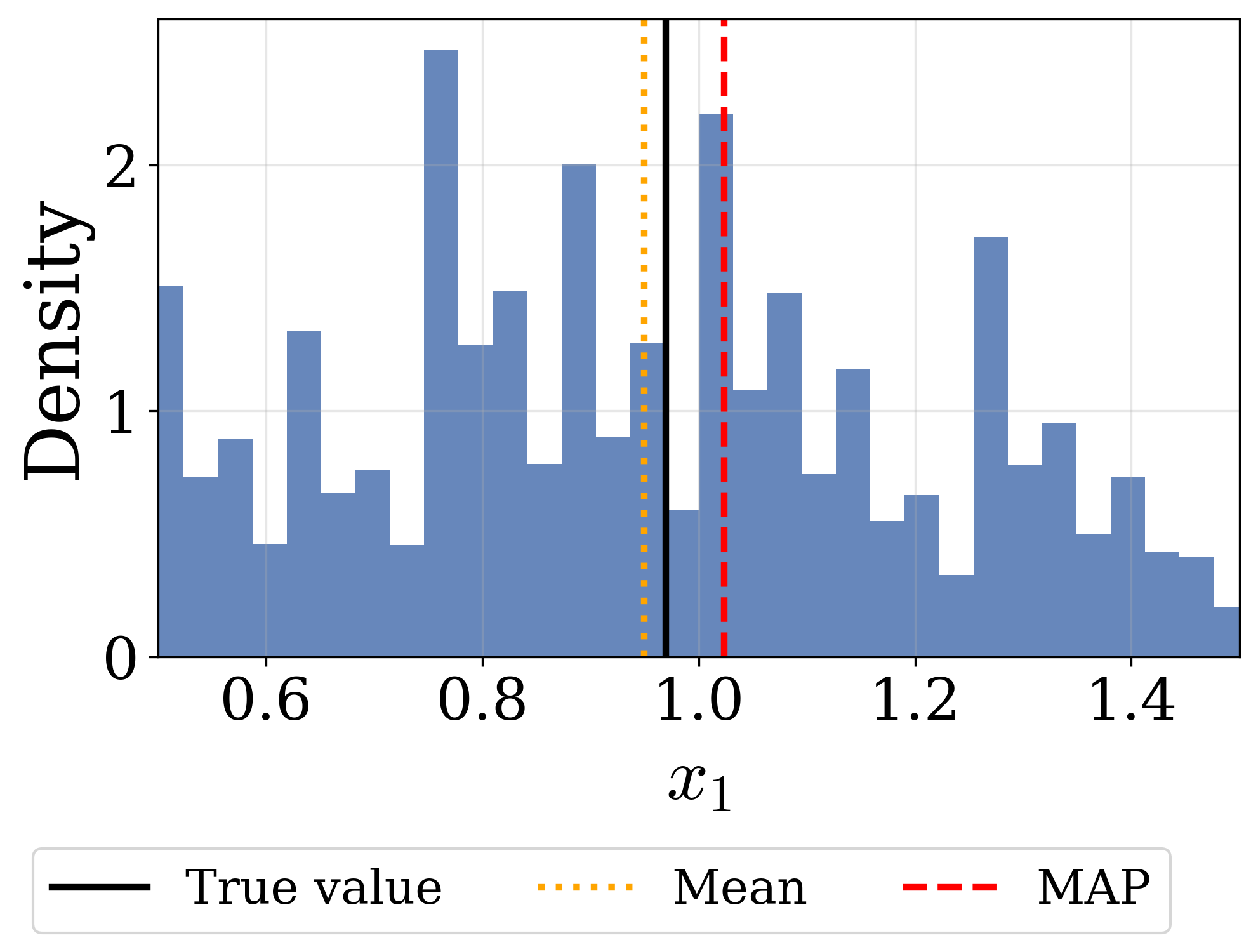}}

\subcaptionbox{$x_2$ values\label{fig:obj3x2_x2}}{%
\includegraphics[width=0.24\linewidth]{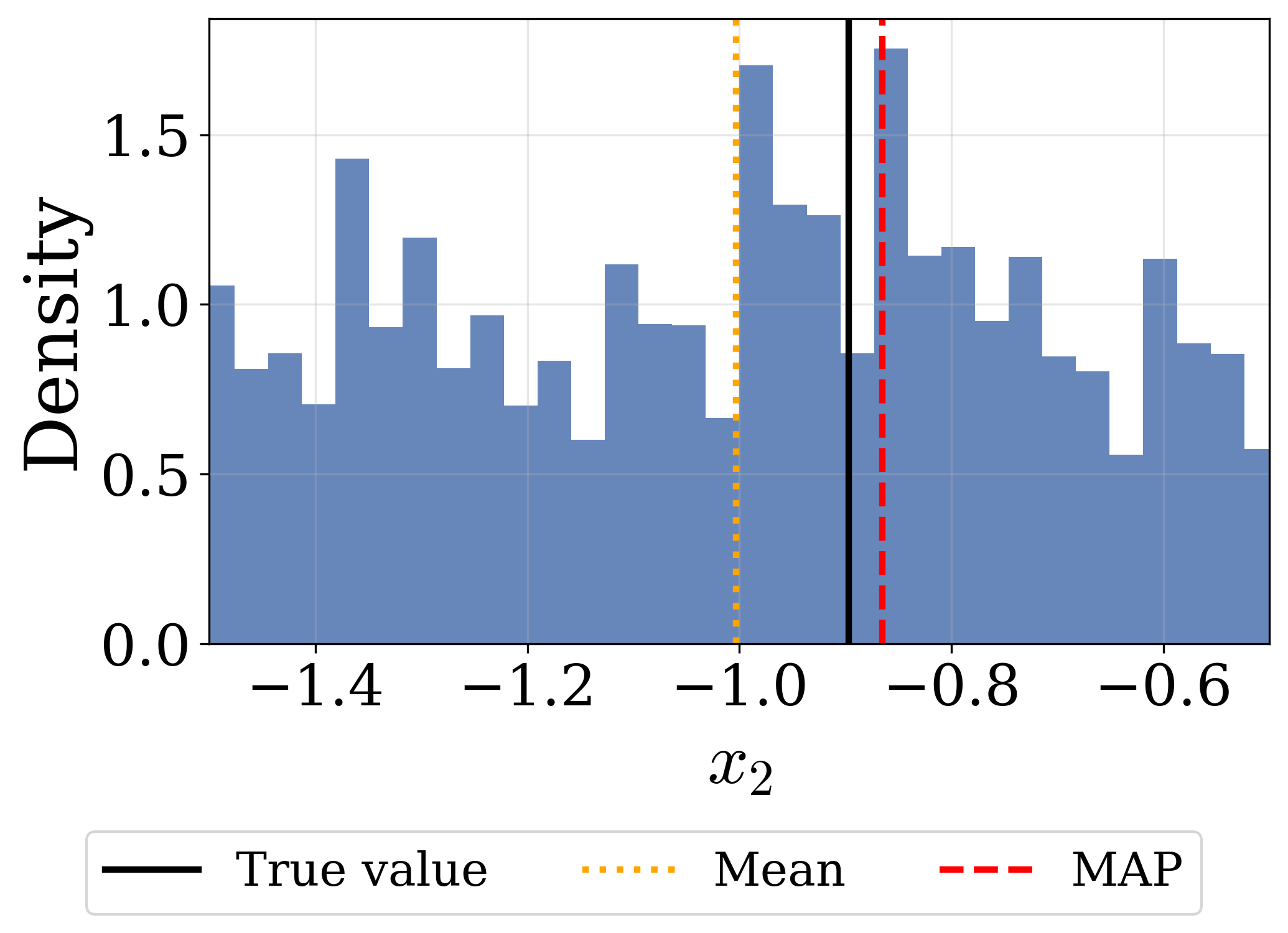}}

\end{tabular}

\caption{Histogram over samples $x_1$ and $x_2$ values for simulated and experimental results for the inverse problem in \eqref{eq:scalar3x2} with $T=2$, $p=4$ and $\varepsilon=0.05$. }
\label{fig:x1x2Histograms}
\end{figure*}

\ref{fig:cost3x2} shows the distribution of objective values over all measurement shots. In both simulation and hardware runs, the distributions are concentrated near the minimum objective value and exhibit a right tail toward higher-cost configurations, indicating that the quantum evolution preferentially samples low-energy states despite the strong violation of the adiabatic condition. The simulated distribution is more sharply concentrated, whereas the hardware distribution exhibits a heavier tail, consistent with the approximately $28\%$ cumulative gate error estimated above.


\ref{fig:x1x2Histograms} shows the marginal histograms of $f_1$ and
$f_2$.
In the noiseless simulation, both marginals display a clear unimodal
peak centred near $f^*$, reflecting genuine probability concentration.
On the hardware, the distributions are broader and exhibit an irregular, spiky structure, suggesting noise-induced spreading in which isolated bitstrings are 
amplified by hardware error patterns rather than by the cost landscape.
Despite this, both marginals remain centered in the correct region, indicating that the underlying quantum optimization dynamics remain visible despite noise.

\ref{fig:theoretical3x2} shows the eigendirections and the $20\%$, $40\%$, and $60\%$ credible regions of the continuous posterior distribution $\mathcal N(f^\star,Q^{-1})$, where $Q=A^\top A+\alpha I$. As these ellipses are derived from the unconstrained continuous problem, they extend partially outside the discrete domain $[0.5,1.5]\times[-1.5,-0.5]$ and should therefore be viewed as a reference geometry rather than as credible regions for the discretized problem.

\ref{fig:empirical3x2sim} and \ref{fig:empirical3x2qpu} show the corresponding empirical sample distributions obtained from simulation and the VTT~Q50 processor, together with the empirical eigendirections, sample mean, and the most frequently observed configurations (four in this example due to ties in the histogram counts). In both cases, the sample distributions exhibit a dominant mode near $f^\star$, and the sample mean provides a stable estimate with distances to $f^\star$ of $0.085$ (simulation) and $0.108$ (hardware), respectively (see \ref{tab:sim_vs_hw}). However, the empirical eigendirections, mean, and MAP samples do not fully align with the theoretical posterior geometry. This discrepancy is more pronounced in the hardware results, where the distribution is broader and more irregular, consistent with the noise-induced spreading discussed above and the approximately $28\%$ cumulative gate error estimated previously.

To quantify the agreement between the empirical and theoretical principal directions, we compute the angular discrepancy
$\Delta\theta=\cos^{-1}(|\boldsymbol{v}_{\rm emp}^{\top}\boldsymbol{v}_{\rm th}|)$,
where $\boldsymbol{v}_{\rm emp}$ and $\boldsymbol{v}_{\rm th}$ denote the dominant empirical and theoretical eigenvectors, respectively. We obtain $\Delta\theta=36.7^\circ$ for the simulated samples and $\Delta\theta=38.0^\circ$ for the hardware samples. While these values indicate that the empirical distributions do not reproduce the posterior geometry quantitatively, they nevertheless exhibit a consistent directional structure, suggesting that the sampled distributions retain some information about the underlying inverse problem.

Taken together, the results show that the correct minimizer can be recovered through post-selection in both simulation and hardware experiments. In contrast, the empirical mean, MAP estimates, and sample geometry are more strongly affected by finite-time evolution, Trotterization, and hardware noise, leading to noticeable deviations from the theoretical posterior structure.











\begin{figure*}[t]
\centering
\graphicspath{{Figures/}{images/2d-new/}}

\setlength{\tabcolsep}{4pt}
\renewcommand{\arraystretch}{0}

\begin{tabular}{ccc}
Theoretical posterior &
Simulation &
VTT~Q50 quantum processor \\[6pt]

\subcaptionbox{\label{fig:theoretical3x2}}{%
\includegraphics[width=0.26\linewidth]{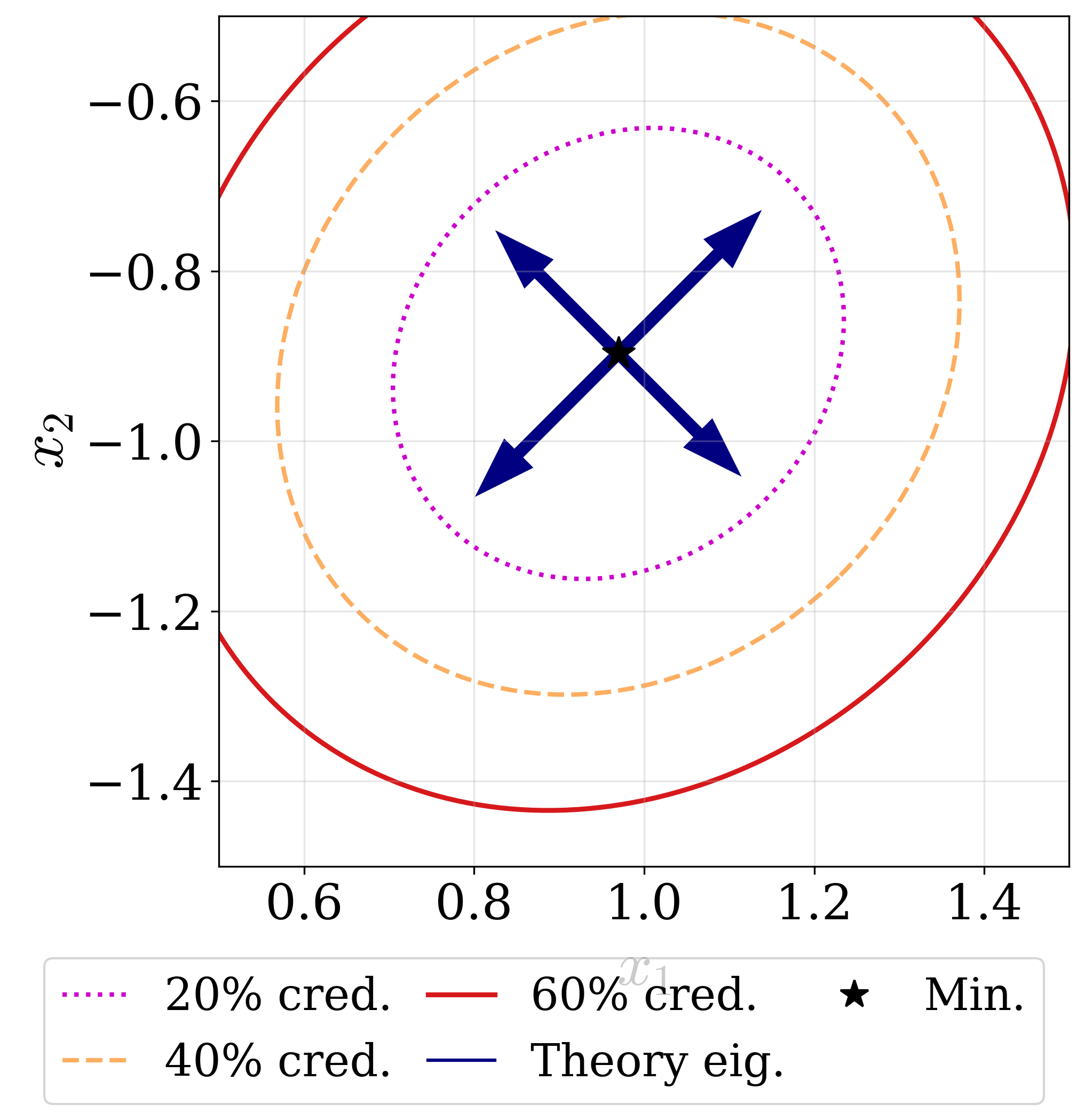}}
&
\subcaptionbox{\label{fig:empirical3x2sim}}{%
\includegraphics[width=0.29\linewidth]{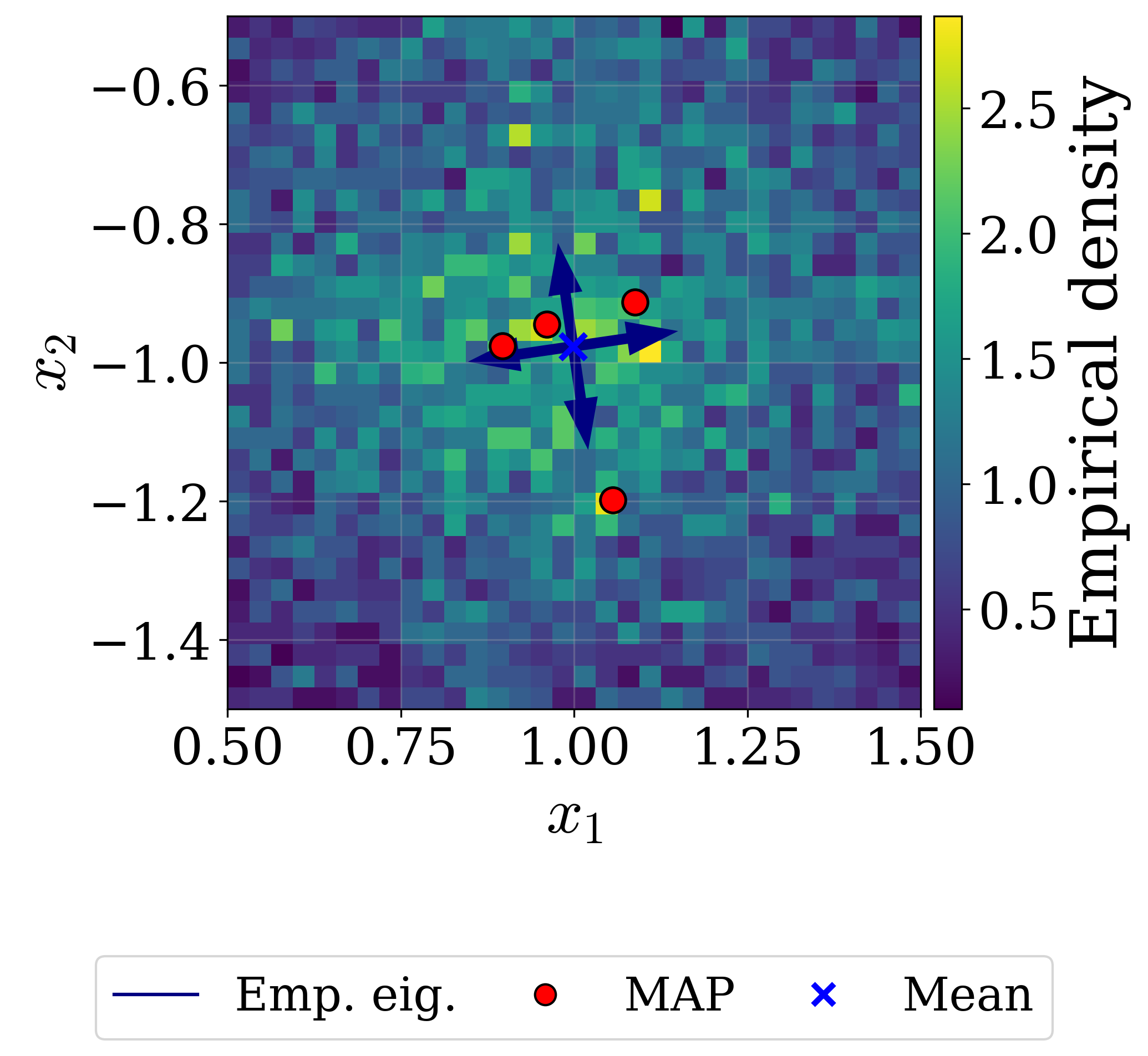}}
&
\subcaptionbox{\label{fig:empirical3x2qpu}}{%
\includegraphics[width=0.29\linewidth]{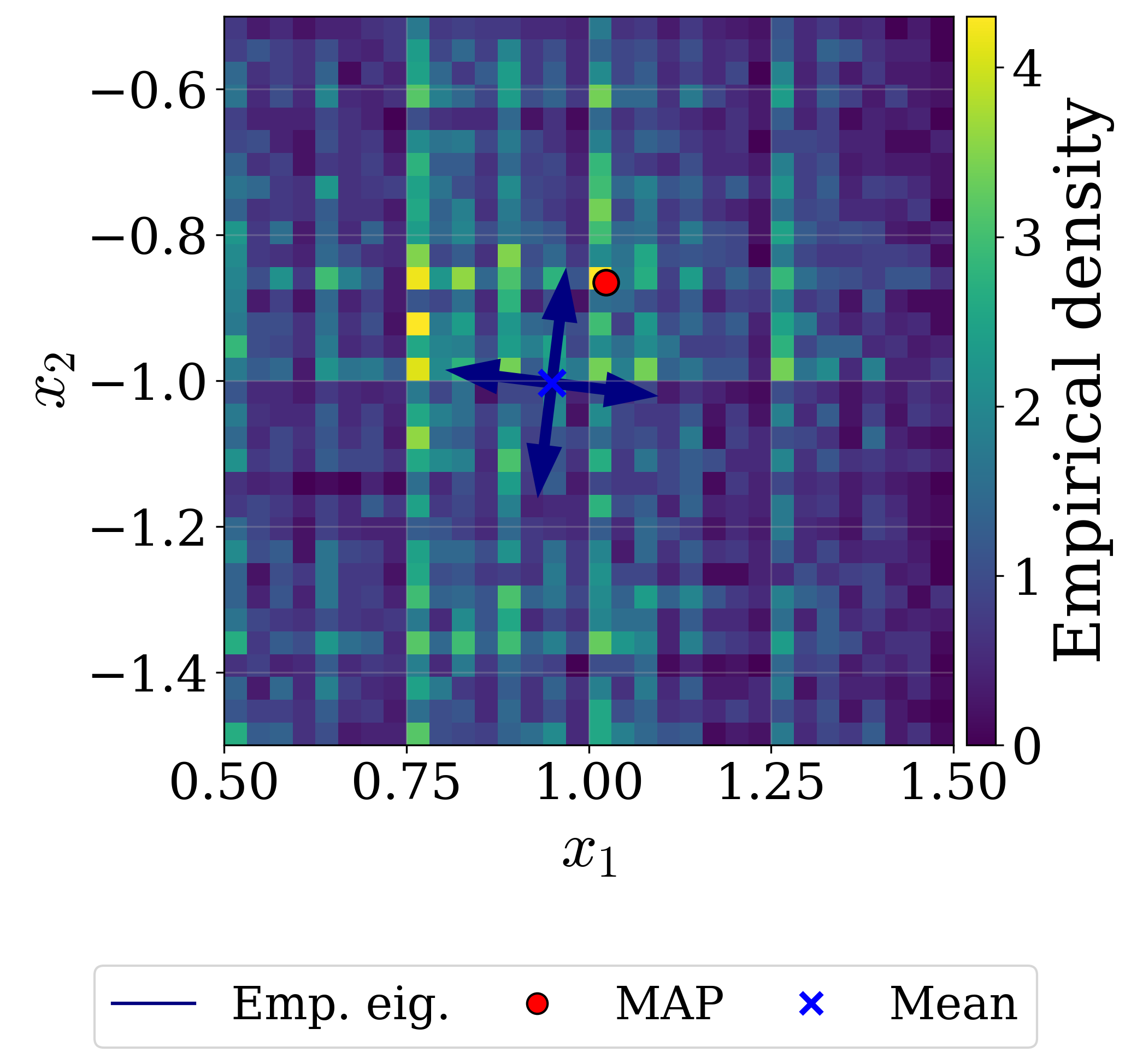}}

\end{tabular}

\caption{Theoretical posterior geometry and empirical sample distributions for the inverse problem \eqref{eq:scalar3x2} with $T=2$, $p=4$, and $\varepsilon=0.05$. The left panel shows the eigendirections and credible regions of the continuous posterior, while the center and right panels show the corresponding sample distributions obtained from simulation and the VTT~Q50 quantum processor, respectively.}
\label{fig:objective3x2}
\end{figure*}







\begin{figure}[t]
  \centering
  \begin{tabular}{cc}
\includegraphics[width=0.40\linewidth]{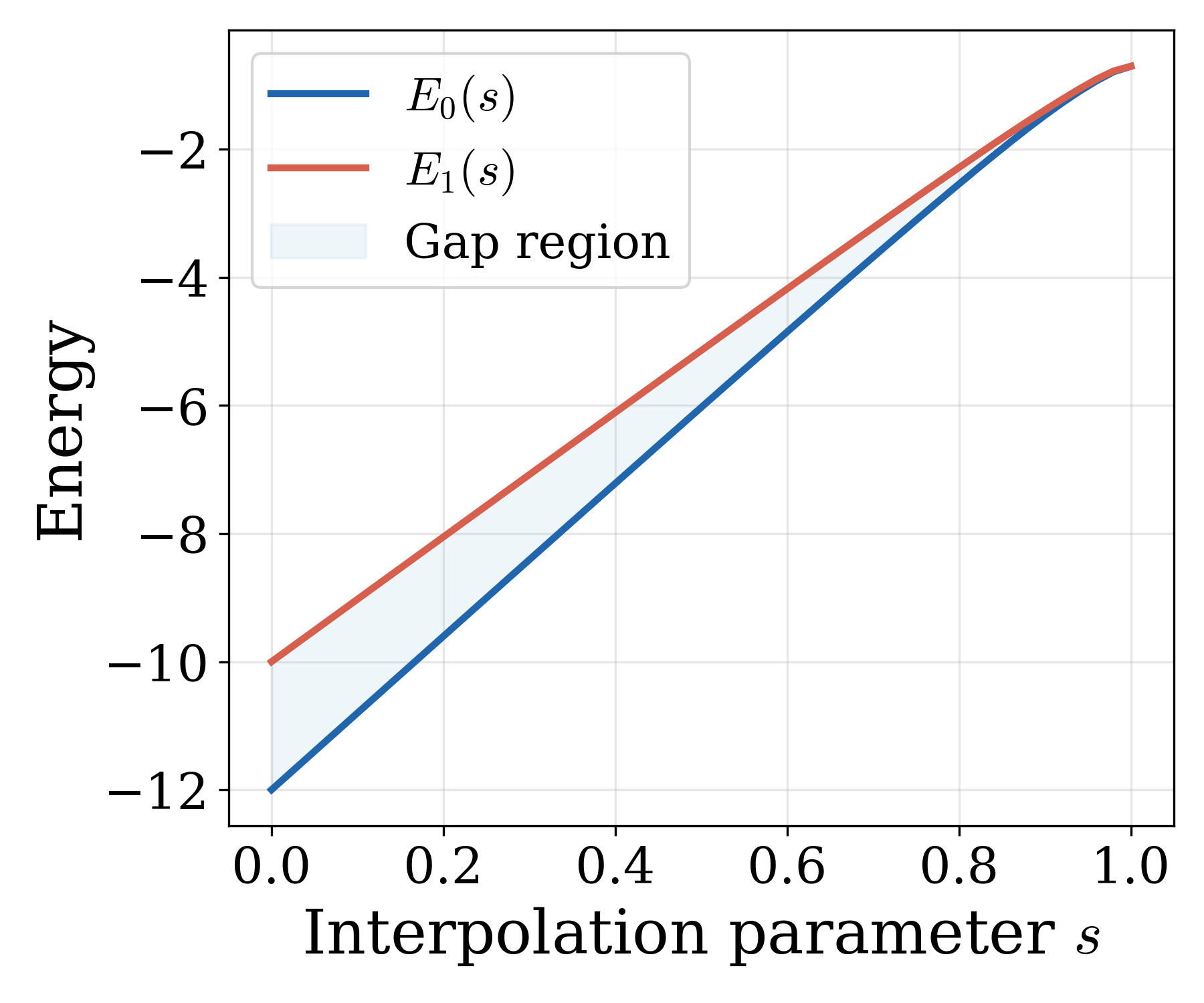} &
\includegraphics[width=0.40\linewidth]{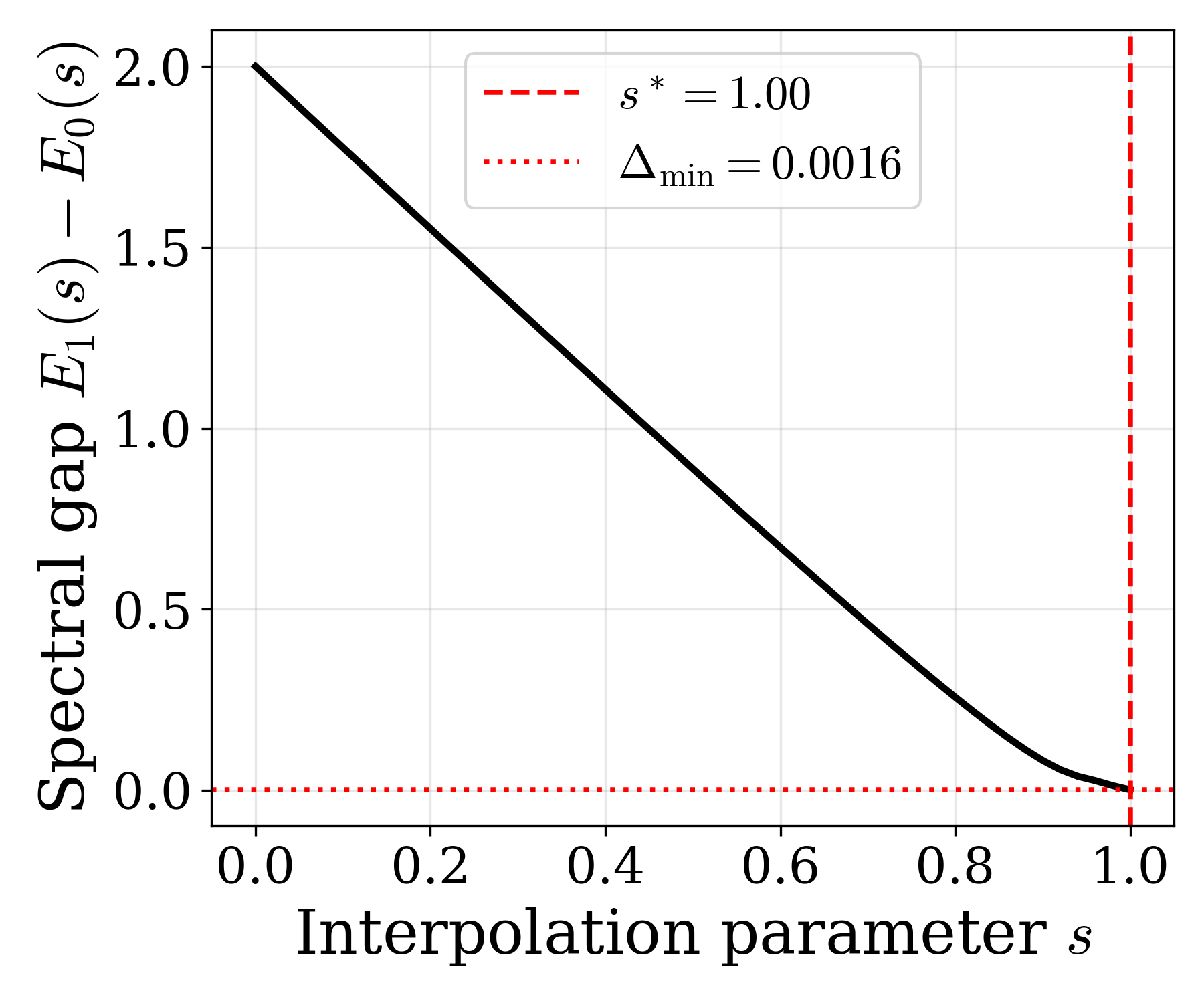}
\end{tabular}
  \caption{Spectral gap $\gamma(s)=E_1(s)-E_0(s)$ of the interpolating
           Hamiltonian $H(s)=(1-s)H_B+sH_C$ for
           problem~\eqref{eq:scalar3x2} ($n=12$ qubits,
           $\varepsilon=0.05$).
           Left: ground and first excited state energies $E_0(s)$ and
           $E_1(s)$.
           Right: spectral gap, which decreases monotonically from
           $\gamma(0)=2$ to $\Delta_{\min}=0.001020$ at $s=1$,
           implying an adiabatic time requirement of order $10^7$,
           far exceeding the chosen $T=2$.}
  \label{fig:spectral_gap}
\end{figure}

\subsection{Optimization with $\ell^1$-regularization in Haar-wavelet basis} \label{sec:results3}

We next perform experiments on the VTT~Q50 quantum computer to
recover five Haar wavelet coefficients of a blurred one-dimensional
signal, using $k \in \{2, 3, 4\}$ bits per variable.
The circuit parameters $T=4$, $p=3$, and $\varepsilon=0.03$ were
selected by running a small number of QPU jobs with $k=4$
setting and choosing the combination that gave the
best-by-cost reconstruction.
The same parameters are then used unchanged across all three
bit settings to enable a direct comparison. The forward operator $A\in\mathbb{R}^{64\times 64}$ is a
convolution with a Gaussian kernel with standard deviation $\sigma=2.0$ \cite{kaipio2005statistical}, and the regularization parameter is $\alpha=0.05$. We aim to solve the inverse problem
\begin{equation}\label{eq:haar}
\min_{\boldsymbol{x}\in\mathbb{R}^5}\; \|A\Phi\boldsymbol{x} - \boldsymbol{g}\|_2^2
    + \alpha\,\|\boldsymbol{x}\|_1,
\end{equation}
where $\Phi\in\mathbb{R}^{64\times 5}$ collects the five Haar basis
vectors as columns, and
$\boldsymbol{g}=A\Phi\boldsymbol{x}_{\rm true}+\boldsymbol{\delta}$
is the observed blurry and noisy signal with noise level
$\|\boldsymbol{g}_{\rm true}-\boldsymbol{g}\|_2/
\|\boldsymbol{g}_{\rm true}\|_2=0.02$, where
$\boldsymbol{g}_{\rm true}$ denotes the noise-free signal.

The reconstruction is represented in the Haar basis using the five
wavelets where, for $N=64$,
\[
\begin{aligned}
\psi_{j,k}(i)
&=
\begin{cases}
\phantom{-}2^{j/2}N^{-1/2},
& k\frac{N}{2^j}\le i <
\left(k+\frac12\right)\frac{N}{2^j},\\[1ex]
-2^{j/2}N^{-1/2},
&
\left(k+\frac12\right)\frac{N}{2^j}
\le i < (k+1)\frac{N}{2^j},\\[1ex]
0,
& \text{otherwise},
\end{cases}\\ \Phi&:=\{\psi_{0,0},
\psi_{1,0},
\psi_{1,1},
\psi_{2,1},
\psi_{2,2}\},
\end{aligned}
\]
for $i=0,\dots,N-1$.
The observed data
$\boldsymbol{g}=A\Phi\boldsymbol{x}_{\rm true}+\boldsymbol{\delta}$
is a blurred and noisy version of the true signal
$\Phi\boldsymbol{x}_{\rm true}$.

The $\ell^1$ regularization promotes sparsity in the coefficient vector
$\boldsymbol{x}$ and is implemented using the split-variable
construction of \ref{sec:qvar}, namely
$\boldsymbol{x}=\boldsymbol{x}^+-\boldsymbol{x}^-$ with
$\boldsymbol{x}^+\ge 0$ and $\boldsymbol{x}^-\ge 0$.
The penalty
$\alpha\,\mathbf 1^\top(\boldsymbol{x}^++\boldsymbol{x}^-)$
coincides with $\alpha\|\boldsymbol{x}\|_1$ at the optimum, where for
each component at most one of $x_\ell^+$ and $x_\ell^-$ is nonzero.

This yields $d_{\rm split}=2\cdot 5=10$ nonnegative variables. Using
$k\in\{2,3,4\}$ bits per variable (cf. \ref{sec:qvar}) results in a total qubit count of
$2kd = 2k\cdot 5 \in \{20,30,40\}$.

We note that the three experiments solve slightly different discretized
problems. For each value of $k$, the true coefficient vector
$\boldsymbol{x}_{\rm true}$ is chosen to lie exactly on the
corresponding grid, whose spacing is $\delta = 1/(2^k-1)$.
Consequently,
$\boldsymbol{x}_{\rm true}$ takes the values
\[
\left(\frac{1}{3},-\frac{2}{3},\frac{1}{3},\frac{2}{3},\frac{2}{3}\right)^\top,
\quad
\left(\frac{1}{3},-\frac{2}{3},\frac{1}{3},\frac{2}{3},\frac{2}{3}\right)^\top,
\quad
\left(\frac{2}{15},-\frac{3}{15},\frac{3}{15},\frac{13}{15},\frac{13}{15}\right)^\top,
\]
for $k=2,3,4$, respectively.
This ensures that the true solution is exactly representable on each
grid, thereby eliminating discretization error as a confounding factor
in the comparison.

 
\ref{tab:haar} summarises the circuit and result statistics for
all three runs.
The number of $CZ$ gates grows substantially with qubit count:
from $1184$ ($k=2$, 20 qubits) to $1355$ ($k=3$, 30 qubits)
to $1371$ ($k=4$, 40 qubits), while the active pair count after
sparsification increases from $94$ to $105$ to $111$.
The cumulative gate error, estimated using the same independent-error
model as in \ref{sec:results2}, ranges from approximately
$1-0.993^{1184}\approx 100\%$ for all three cases. This confirms that
these circuits are deep enough that the independent-error bound
is vacuous, and the algorithm operates purely as a heuristic sampler.
 
\begin{table}[b!]
\centering
\caption{Circuit and result statistics for the Haar wavelet
         experiment ($T=4$, $p=3$, $\varepsilon=0.03$,
         $N_s=10{,}000$ shots) on the VTT~Q50.}
\label{tab:haar}
\begin{tabular}{lccc}
\toprule
 & $k=2$ & $k=3$ & $k=4$ \\
\midrule
Qubits                     & 20    & 30    & 40    \\
Active pairs               & 94    & 105   & 111   \\
$CZ$ gates (transpiled)    & 1184  & 1355  & 1371  \\
Circuit depth              & 963   & 832   & 637   \\
Grid spacing $\Delta x$    & $1/3$ & $1/7$ & $1/15$ \\
Unique bitstrings sampled  & 9851  & 10000 & 10000 \\
Best-by-cost               & 0.252 & 0.241 & 0.260 \\
Mean cost                  & 2.409 & 1.913 & 1.915 \\
\bottomrule
\end{tabular}
\end{table}

\begin{figure*}[b!]
\centering
\graphicspath{{Figures/}}

\setlength{\tabcolsep}{2pt}
\renewcommand{\arraystretch}{0}

\newcommand{\reconw}{0.24\textwidth}

\begin{tabular}{c}
\\[6pt]
\subcaptionbox{2 bits per variable\label{fig:2bits}}{%
\includegraphics[width=0.32\linewidth]{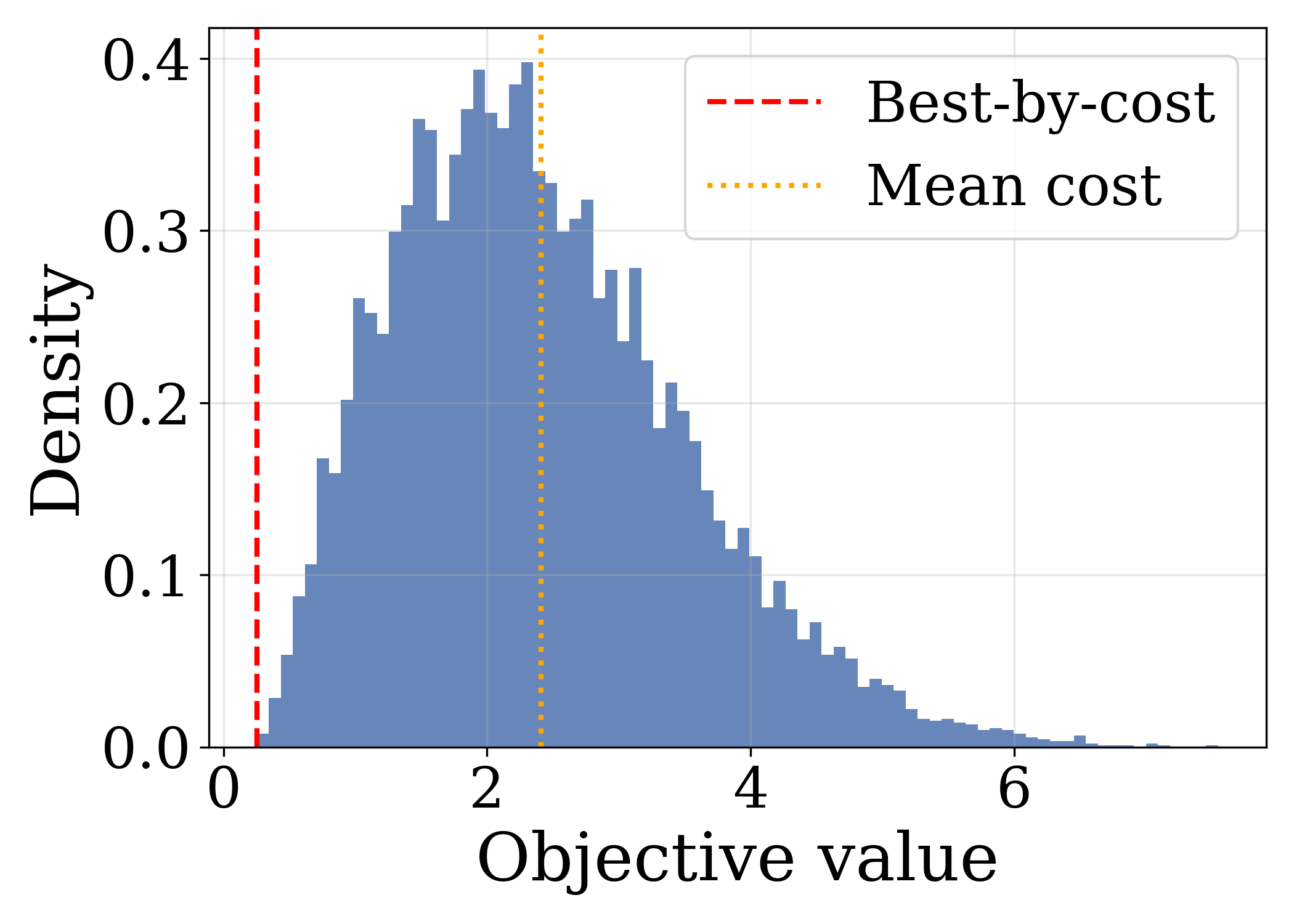}}
\subcaptionbox{3 bits per variable\label{fig:3bits}}{%
\includegraphics[width=0.32\linewidth]{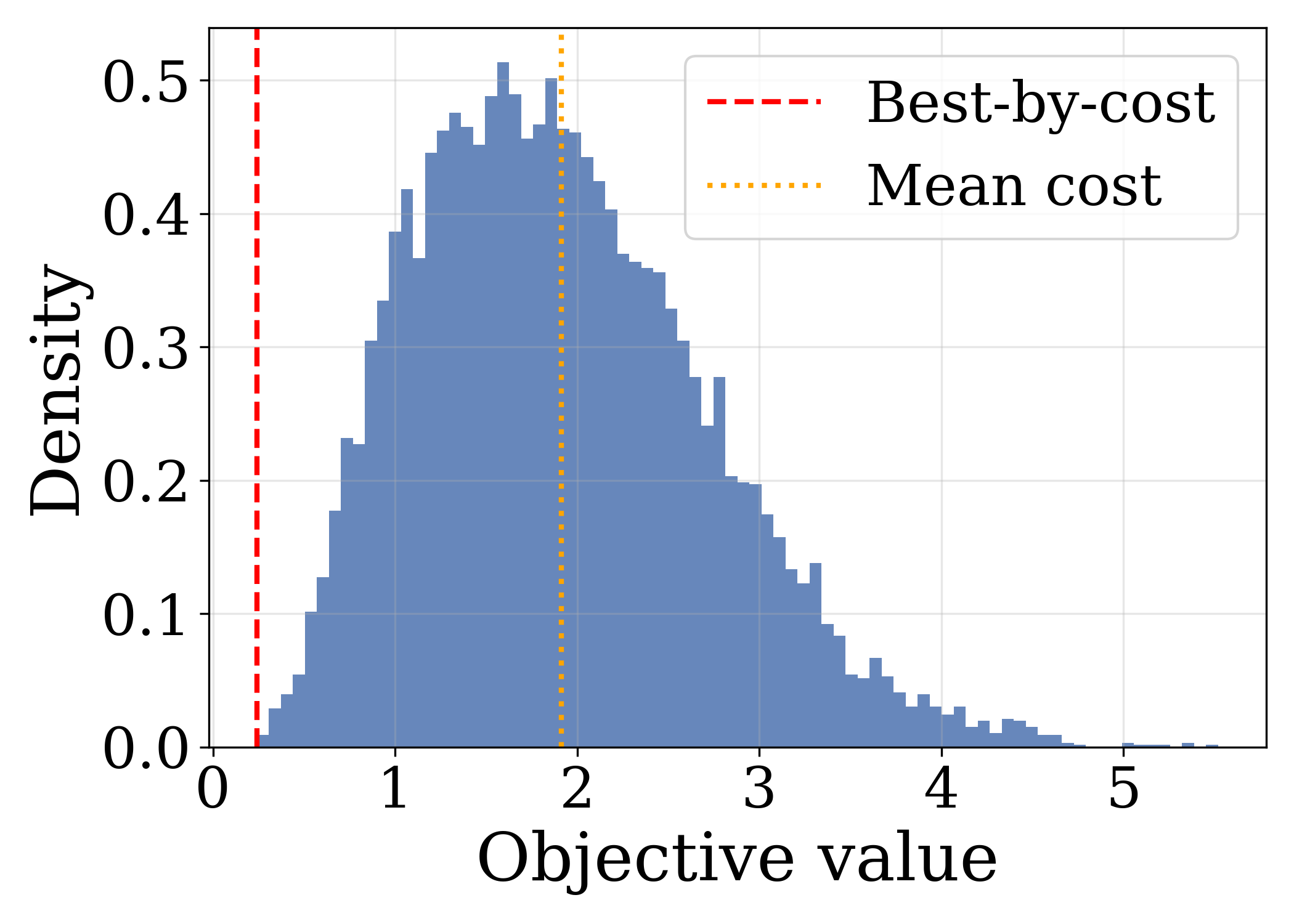}}
\subcaptionbox{4 bits per variable\label{fig:4bits}}{%
\includegraphics[width=0.32\linewidth]{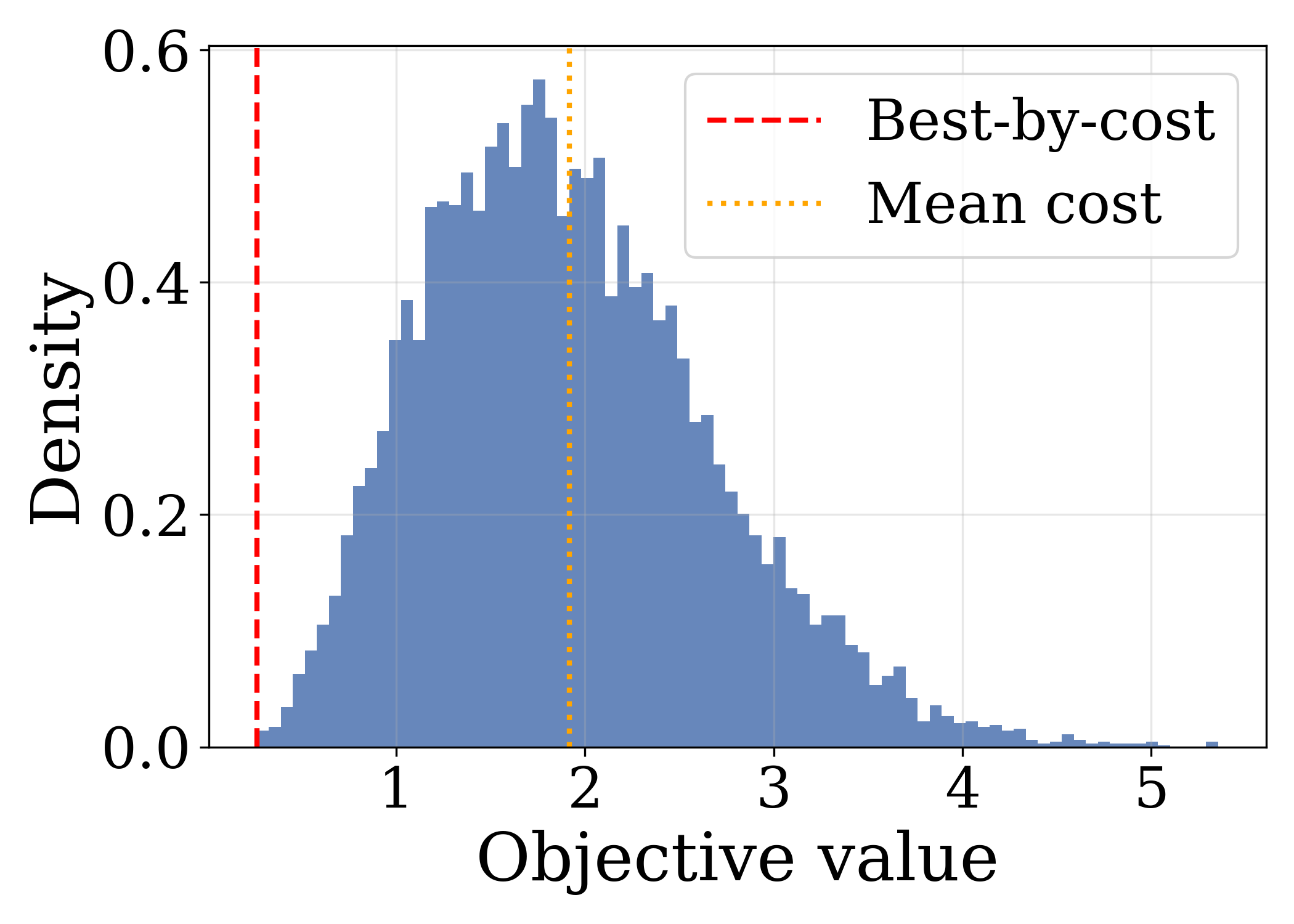}}
\end{tabular}

\caption{Cost distributions for varying bits per variable for solving the inverse problem in \eqref{eq:haar} with 5 wavelet coefficients on the VTT~Q50 Quantum computer with $T=4$, $p=3$ and $\varepsilon=0.03$. }
\label{fig:haarcost}
\end{figure*}
 
\ref{fig:haarcost} shows the distribution of sampled objective
values for each bit setting.
All three distributions are right-skewed with a sharp rise near
zero and a heavy tail, consistent with the noise-dominated regime
characterised above.
The best-by-cost values are $0.252$, $0.241$, and $0.260$ for
$k = 2, 3, 4$ respectively, showing that the three settings
perform comparably in terms of the minimum achieved cost.
The mean cost decreases from $2.4$ ($k=2$) to approximately
$1.9$ ($k=3,4$), suggesting that the 3- and 4-bit distributions
concentrate slightly more probability in the low-cost region,
likely because the narrower grid spacing reduces the energy range
of the QUBO landscape.
An important observation is that the 4-bit circuit is
\emph{shallower} (depth 637) than the 3-bit circuit (depth 832)
despite having more qubits; this is a consequence of the
$\varepsilon=0.03$ sparsification interacting differently with
the two QUBO structures after transpilation to the Q50 connectivity.


\ref{fig:haarcoeffs} shows the true, best-by-cost, and mean
wavelet coefficients for each bit setting, and \ref{tab:haarcoeffs}
summarises the Euclidean distances between the recovered and true
coefficient vectors.

Several observations can be made from \ref{fig:haarcoeffs} and
\ref{tab:haarcoeffs}.
First, the signs of all five best-by-cost coefficients agree with the
true signs in all three settings, including the negative coefficient
associated with $\psi_{1,0}$. This is nontrivial in the split-variable
formulation $x_\ell=x_\ell^+-x_\ell^-$, where negative coefficients are
represented by $x_\ell^- > x_\ell^+$.

\begin{figure*}[t!]
\centering
\graphicspath{{Figures/}}

\setlength{\tabcolsep}{2pt}
\renewcommand{\arraystretch}{0}

\newcommand{\reconw}{0.24\textwidth}

\begin{tabular}{c}
\\[6pt]
\subcaptionbox{2 bits per variable\label{fig:2bits2}}{%
\includegraphics[width=0.32\linewidth]{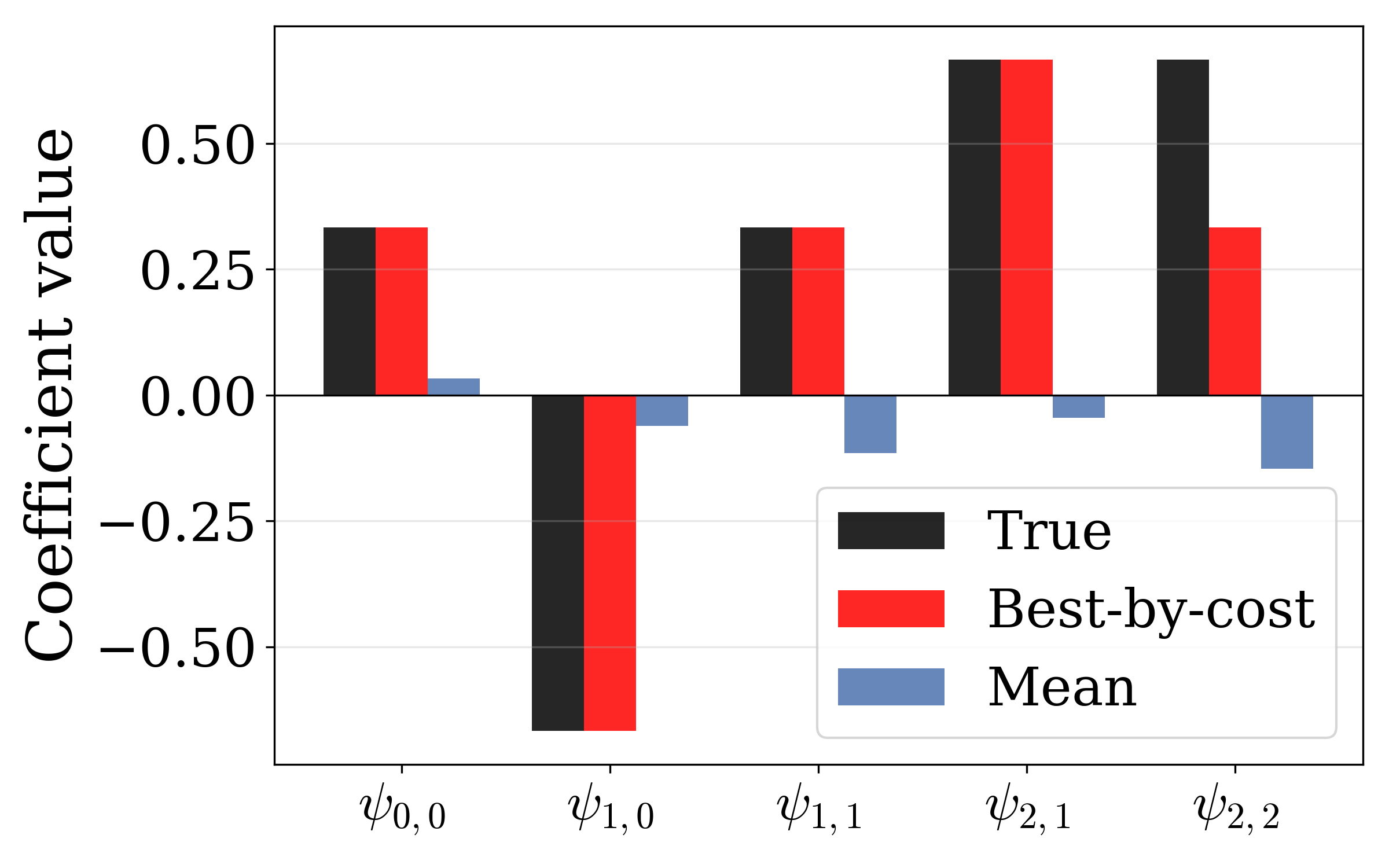}}
\subcaptionbox{3 bits per variable\label{fig:3bits2}}{%
\includegraphics[width=0.32\linewidth]{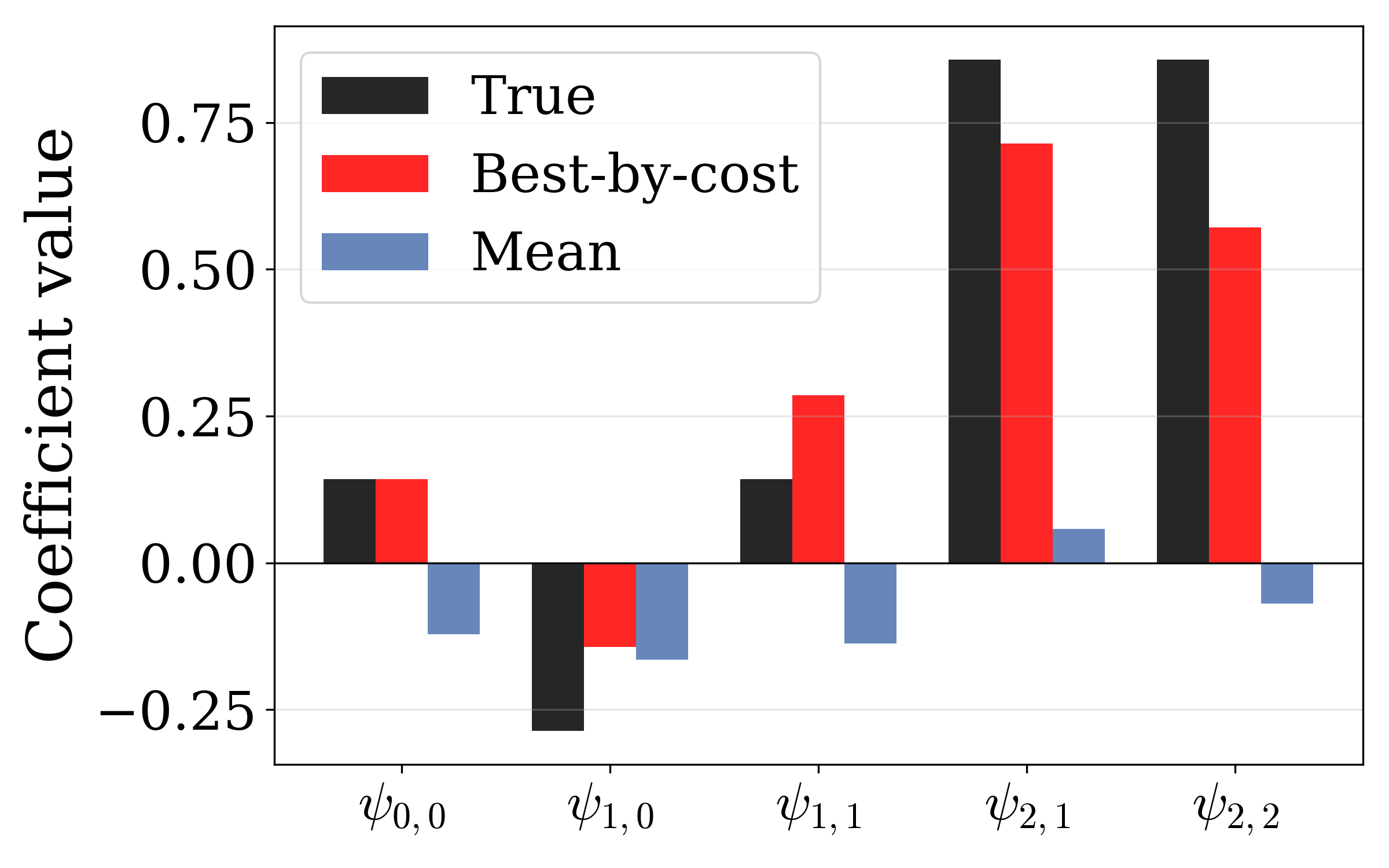}}
\subcaptionbox{4 bits per variable\label{fig:4bits2}}{%
\includegraphics[width=0.32\linewidth]{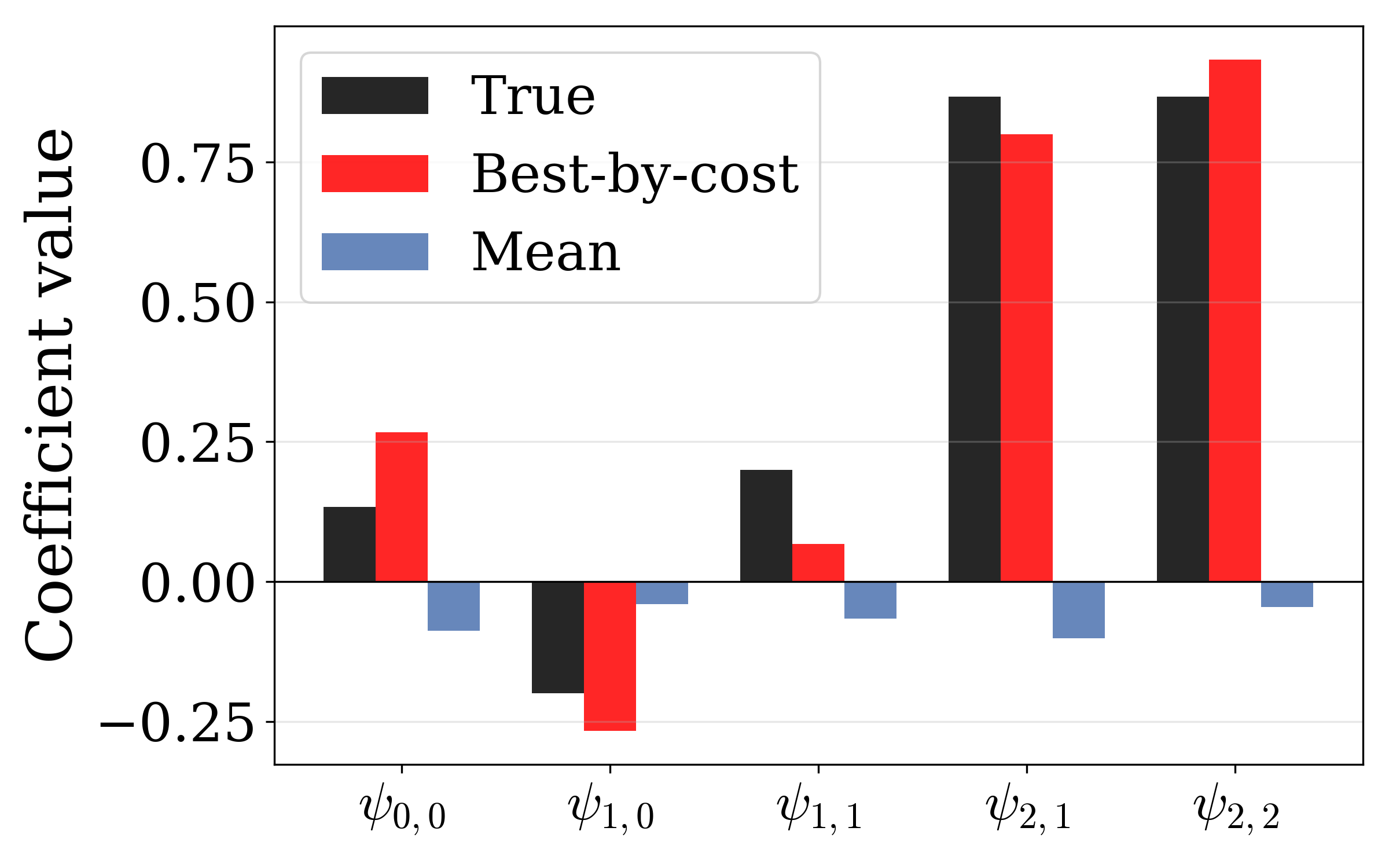}}

\end{tabular}

\caption{Recovered wavelet coefficients for varying bits per variable for solving the inverse problem in \eqref{eq:haar} on the VTT~Q50 Quantum computer with $T=4$, $p=3$ and $\varepsilon=0.03$. }
\label{fig:haarcoeffs}
\end{figure*}

\begin{table}[t]
\centering
\caption{Euclidean distance $\|\boldsymbol{x}_{\rm rec} - \boldsymbol{x}_{\rm true}\|$ and per-coefficient absolute errors $|x_{{\rm rec},\ell} - x_{{\rm true},\ell}|$ for the best-by-cost and mean estimates ($T=4$, $p=3$, $\varepsilon=0.03$, $N_s=10{,}000$ shots). Since the Haar basis matrix $\Phi$ has orthonormal columns, the signal reconstruction error satisfies $\|\boldsymbol{f}_{\rm rec} - \boldsymbol{f}_{\rm true}\| = \|\boldsymbol{x}_{\rm rec} - \boldsymbol{x}_{\rm true}\|$ and is therefore equal to the final column.}
\label{tab:haarcoeffs}
\begin{tabular}{llcccccr}
\toprule
 & & $\psi_{0,0}$ & $\psi_{1,0}$ & $\psi_{1,1}$
   & $\psi_{2,1}$ & $\psi_{2,2}$
   & $\|\cdot\|$ \\
\midrule
\multirow{2}{*}{$k=2$}
  & Best & $0.000$ & $0.000$ & $0.000$ & $0.000$ & $0.333$ & $0.333$ \\
  & Mean & $0.300$ & $0.605$ & $0.449$ & $0.712$ & $0.813$ & $1.352$ \\[4pt]
\multirow{2}{*}{$k=3$}
  & Best & $0.000$ & $0.143$ & $0.143$ & $0.143$ & $0.286$ & $0.378$ \\
  & Mean & $0.265$ & $0.121$ & $0.280$ & $0.799$ & $0.927$ & $1.289$ \\[4pt]
\multirow{2}{*}{$k=4$}
  & Best & $0.133$ & $0.067$ & $0.133$ & $0.067$ & $0.067$ & $0.221$ \\
  & Mean & $0.221$ & $0.160$ & $0.266$ & $0.968$ & $0.912$ & $1.383$ \\
\bottomrule
\end{tabular}
\end{table}

Second, the pattern of per-coefficient errors differs markedly
across settings.
For $k=2$, coefficients $\psi_{0,0}$ through $\psi_{2,1}$
(indices 0--3) are recovered with negligible error ($<0.001$),
while $\psi_{2,2}$ (index 4) has a large error of $0.333$,
equal to one grid spacing $\Delta x = 1/3$.
This single large error dominates the Euclidean distance of
$0.334$ and is directly responsible for the incorrect right-half
signal recovery seen in \ref{fig:haarsignal}a.
For $k=3$, the coefficient $\psi_{0,0}$ is recovered exactly, while the error is spread more evenly across coefficients
1--4, with none of these coefficient recovered exactly (errors of $0.143$
and $0.286$, corresponding to one and two grid spacings
$\Delta x=1/7\approx 0.143$).
For $k=4$, errors are the most evenly distributed (range
$0.066$--$0.134$) and the Euclidean distance $0.221$ is the
smallest of the three settings, consistent with the finer grid
resolution ($\Delta x=1/15\approx 0.067$) allowing a more accurate
representation.

Third, the mean coefficient estimates (blue bars in \ref{fig:haarcoeffs}) are close to zero for all settings and coefficients, with Euclidean distances to $\boldsymbol{x}_{\rm true}$ of $1.35$, $1.29$, and $1.38$ for $k=2,3,4$, respectively, substantially larger than the corresponding best-by-cost errors. The standard deviation of each marginal is approximately $0.5$ across all settings, consistent with a near-uniform distribution over the domain $[-1,1]$ induced by the split-variable representation $x_\ell=x_\ell^+-x_\ell^-$. Consequently, the mean provides a poor point estimate of the wavelet coefficients in this hardware setting.

\begin{figure*}[t]
\centering
\graphicspath{{Figures/}}

\setlength{\tabcolsep}{2pt}
\renewcommand{\arraystretch}{0}

\newcommand{\reconw}{0.24\textwidth}

\begin{tabular}{c}
\\[6pt]
\subcaptionbox{2 bits per variable\label{fig:2bits3}}{%
\includegraphics[width=0.32\linewidth]{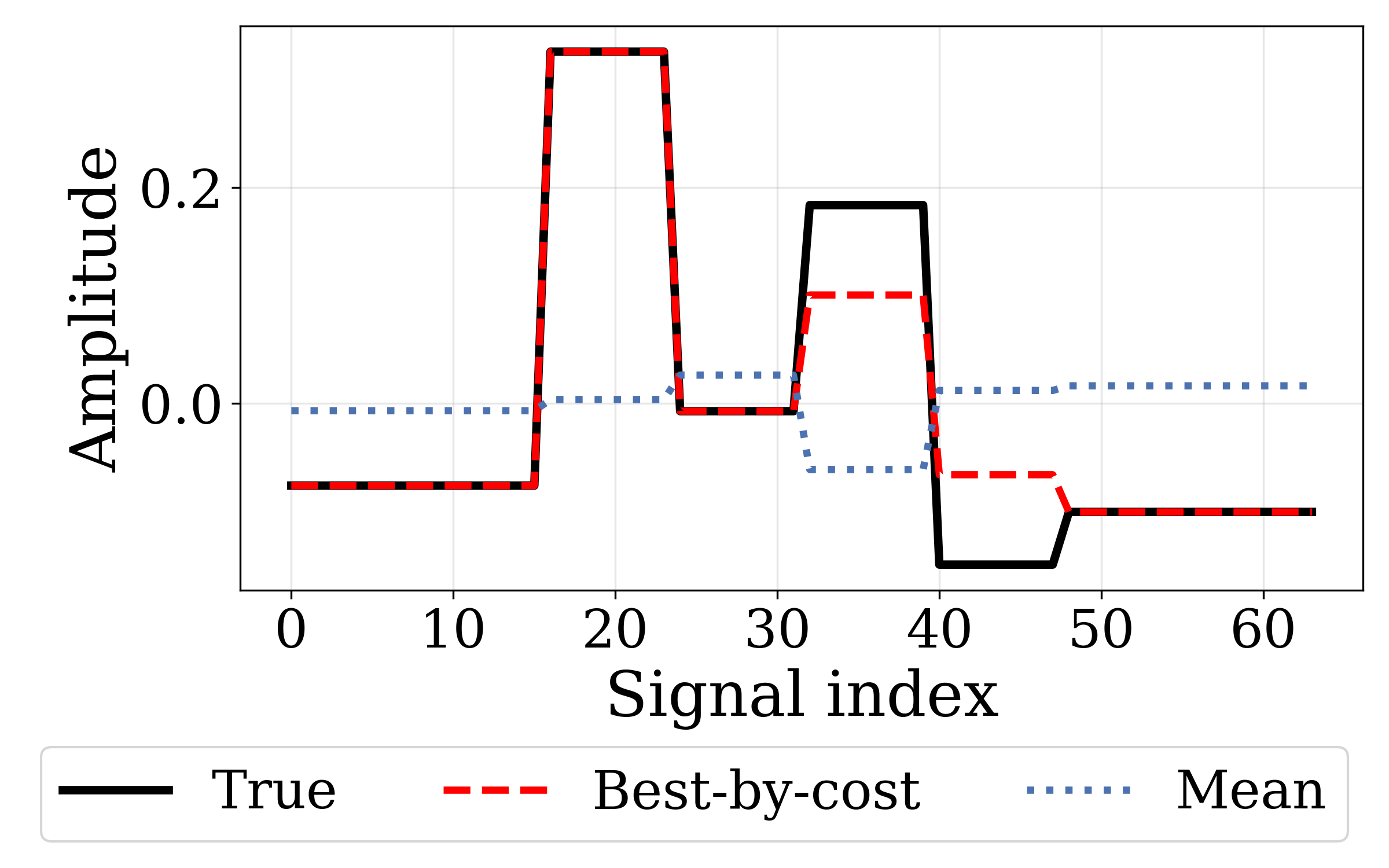}}
\subcaptionbox{3 bits per variable\label{fig:3bits3}}{%
\includegraphics[width=0.32\linewidth]{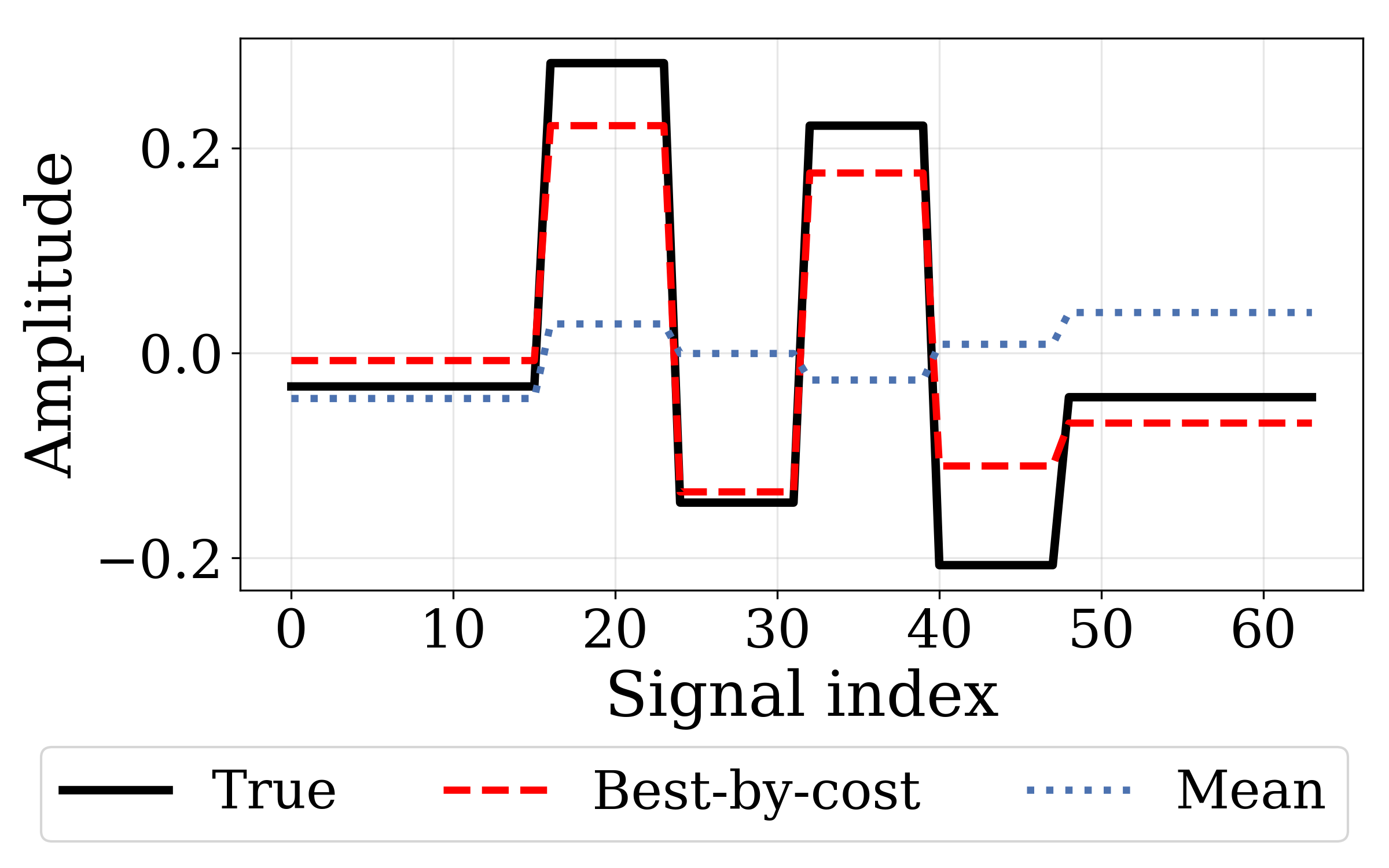}}
\subcaptionbox{4 bits per variable\label{fig:4bits3}}{%
\includegraphics[width=0.32\linewidth]{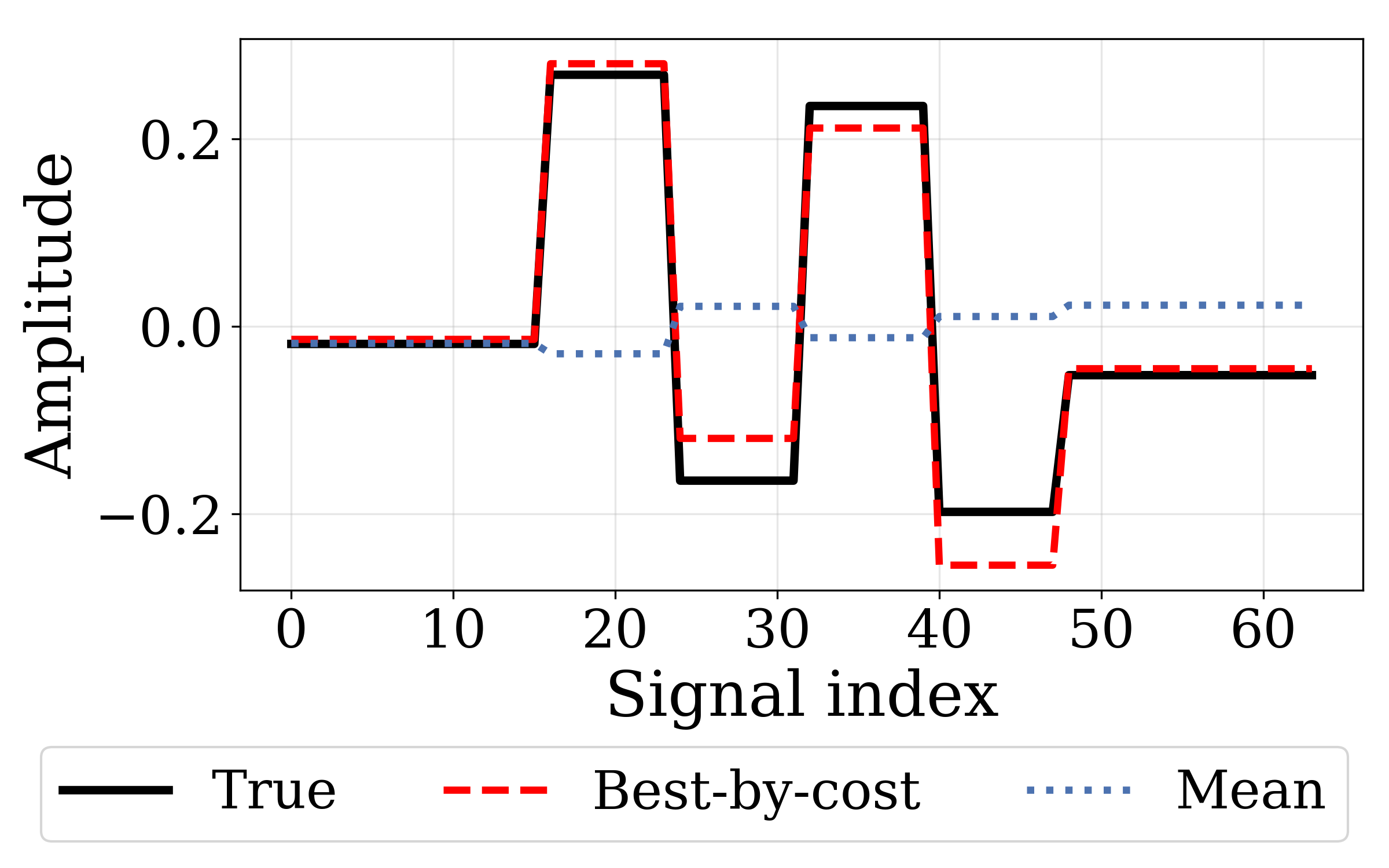}}
\end{tabular}
\caption{Recovered signal for varying bits per variable for solving the inverse problem in \eqref{eq:haar} with 5 wavelet coefficients on the VTT~Q50 Quantum computer with $T=4$, $p=3$ and $\varepsilon=0.03$. }
\label{fig:haarsignal}
\end{figure*}

\ref{fig:haarsignal} shows the true and recovered signals, with
signal reconstruction errors $\|\boldsymbol{f}_{\rm rec} -
\boldsymbol{f}_{\rm true}\|$ given in \ref{tab:haarcoeffs}. The true signal $f_{\rm true} = \Phi\boldsymbol{x}_{\rm true}$
is piecewise constant, with jumps occurring at the boundaries
between the positive and negative halves of each Haar basis function.


For $k=2$ (Fig.~\ref{fig:haarsignal}a), the best-by-cost reconstruction correctly recovers the regional mean and sign structure in all four quadrants, with a maximum pointwise error of $0.083$ at index $32$. Since the Haar basis is orthonormal, the signal reconstruction error equals the coefficient error, $\|\boldsymbol{f}_{\rm best}-\boldsymbol{f}_{\rm true}\|_2=0.333$ (see \ref{tab:haarcoeffs}), and is entirely due to the underestimation of the coefficient associated with $\psi_{2,2}$ by one grid spacing $\Delta x=1/3$. For $k=3$ (\ref{fig:haarsignal}b), the reconstruction again captures the correct sign pattern and overall structure, but with reduced amplitude due to the underestimation of the dominant coefficients associated with $\psi_{2,1}$ and $\psi_{2,2}$; the maximum pointwise error is $0.097$ at index $40$. For $k=4$ (\ref{fig:haarsignal}c), the reconstruction is the most accurate, with maximum pointwise error $0.057$ and coefficient error $\|\boldsymbol{f}_{\rm best}-\boldsymbol{f}_{\rm true}\|=0.221$, consistent with the smaller and more evenly distributed coefficient errors reported in \ref{tab:haarcoeffs}. In all three cases, the mean reconstruction remains close to zero throughout the domain, with errors $1.35$, $1.29$, and $1.38$ for $k=2,3,4$, respectively, confirming that the sample mean is not a useful estimator in this hardware setting.

\ref{tab:haarcoeffs} shows that the $k=4$ setting achieves the smallest best-by-cost error ($0.221$), followed by $k=2$ ($0.333$) and $k=3$ ($0.378$). These results should be interpreted with caution, however, since the parameters $T=4$, $p=3$, and $\varepsilon=0.03$ were tuned for the $k=4$ setting and then applied unchanged to $k=2$ and $k=3$. Consequently, the superior performance of $k=4$ is not unexpected, and the slightly poorer performance of $k=3$ relative to $k=2$ is more likely due to the mismatch between the fixed parameters and the corresponding QUBO formulation than to circuit complexity alone. A controlled study with parameters tuned separately for each value of $k$ would be required to assess the effect of encoding resolution. Nevertheless, the $k=2$ setting achieves competitive accuracy using only $20$ qubits, while the $k=4$ setting provides the best reconstruction, with all coefficient errors between one and two grid spacings ($\Delta x=1/15$). In contrast, the mean estimate is uninformative in all cases, with errors $1.352$, $1.289$, and $1.383$ for $k=2,3,4$, respectively, confirming that post-selection via the best-by-cost sample is the only effective estimation strategy on this hardware.

\section{Conclusions} \label{sec:conclusion}
We have presented a framework for formulating regularized linear inverse problems as quadratic unconstrained binary optimization (QUBO) problems and solving them using quantum optimization methods. The proposed approach accommodates both Tikhonov and sparsity-promoting regularization within a unified Hamiltonian formulation and extends naturally to variational inverse problems through wavelet-based representations. In addition, we introduced the notion of quantum sensitivity and derived theoretical bounds relating perturbations arising from approximate quantum evolution to stability properties of the underlying inverse problem. Numerical experiments demonstrated qualitative agreement between the theoretical analysis and the observed distributions of low-energy solutions in both simulated and physical quantum environments. While the empirical distributions do not fully reproduce the posterior geometry predicted by the continuous theory, they exhibit nontrivial directional structure and capture salient features of the underlying inverse problem. Results obtained on quantum hardware further support the practical viability of the proposed framework and its ability to identify accurate low-energy reconstructions through post-selection.

At the same time, the experiments highlight important limitations of current quantum devices. While the simulated and physical quantum systems exhibit qualitatively similar behavior, noticeable discrepancies remain in the resulting solution distributions. In particular, the empirical distributions observed on hardware differ from their simulated counterparts, and the estimated posterior means do not fully agree with either the theoretical predictions or the simulation results.

Several directions for future work remain open. Most notably, the present study is restricted to linear inverse problems with QUBO-based Hamiltonians. However, the underlying quantum optimization framework is not inherently tied to quadratic or even differentiable objective functions. Since the adiabatic evolution paradigm depends primarily on the specification of a suitable Hamiltonian rather than on particular analytical properties of the objective, extending these ideas to nonlinear inverse problems and more general variational formulations represents a promising research direction. Such extensions may enable the exploration of inverse problem methodologies that are difficult to realize within existing classical optimization frameworks and further clarify the role of quantum computation in scientific inference.

\section*{Acknowledgments}
This work was funded by the  Flagship of Advanced Mathematics for Sensing, Imaging and Modelling (grant no. 359186). S. Rautio also acknowledges support from the Väisälä Fund through the Finnish Academy of Science and Letters. A. Hauptmann is also supported by Research Council of Finland under grant numbers 338408, 370528.
B.~M.~Afkham is also supported by Research Council of Finland under grant numbers 371523.

The authors acknowledge the computational resources and support provided by CSC – IT Center for Science (Finland) under Finnish Quantum-Computing Infrastructure projects 462001253 and 462001377.

\bibliographystyle{plain}
\bibliography{references}
\end{document}